\newcommand{\bC}{\mathbb{C}}
\newcommand{\bF}{\mathbb{F}}
\newcommand{\bZ}{\mathbb{Z}}
\newcommand{\unit}{\mathbf{1}}
\newcommand{\cA}{\mathcal{A}}
\newcommand{\cC}{\mathcal{C}}
\newcommand{\cI}{\mathcal{I}}
\newcommand{\cT}{\mathcal{T}}
\newcommand{\cV}{\mathcal{V}}
\newcommand{\cW}{\mathcal{W}}
\newcommand{\End}{\operatorname{End}}
\newcommand{\Ext}{\operatorname{Ext}}
\newcommand{\Extabel}{\operatorname{Extabel}}
\newcommand{\Hom}{\operatorname{Hom}}
\newcommand{\Pic}{\operatorname{Pic}}
\newcommand{\uPic}{\operatorname{\underline{Pic}}}
\newcommand{\Stab}{\operatorname{Stab}}
\newcommand{\uStab}{\operatorname{\underline{Stab}}}
\newtheorem{thm}{Theorem}[subsection]
\newtheorem{conj}[thm]{Conjecture}
\newtheorem*{thmm}{Theorem}
\newtheorem{prop}[thm]{Proposition}
\newtheorem{lem}[thm]{Lemma}
\newtheorem{cor}[thm]{Corollary}
\theoremstyle{definition}
\newtheorem{defn}[thm]{Definition}
\theoremstyle{remark}
\newtheorem{rem}[thm]{Remark}
\begin{document}

\title{Building vertex algebras from parts}
\author{Scott Carnahan}

\begin{abstract}
Given a collection of modules of a vertex algebra parametrized by an abelian group, together with one dimensional spaces of composable intertwining operators, we assign a canonical element of the cohomology of an Eilenberg-Mac Lane space.  This element describes the obstruction to locality, as the vanishing of this element is equivalent to the existence of a vertex algebra structure with multiplication given by our intertwining operators, and given existence, the structure is unique up to isomorphism.  The homological obstruction reduces to an ``evenness'' problem that naturally vanishes for 2-divisible groups, so simple currents organized into odd order abelian groups always produce vertex algebras.  Furthermore, in cases most relevant to conformal field theory (i.e., when we have well-behaved contragradients and tensor products), we obtain our spaces of intertwining operators naturally, and the evenness obstruction reduces to the question of whether the contragradient bilinear form on certain order 2 currents is symmetric or skew-symmetric.  We show that if we are given a simple regular VOA and integral-weight modules parametrized by a group of even units in the fusion ring, then the direct sum admits the structure of a simple regular VOA, called the simple current extension, and this structure is unique up to isomorphism.
\end{abstract}

\maketitle

\tableofcontents



\subsubsection{Introduction}

In this paper, we construct vertex algebras from smaller pieces, by defining a multiplication operation that connects the pieces.  The question of extending vertex algebras is about as old as the theory of vertex algebras itself, although the language of modules and intertwining operators took some time to be developed.  Lattice vertex algebras are formed from modules for the Heisenberg subalgebra \cite{B86}, and the Monster vertex operator algebra, constructed in \cite{FLM88}, was later interpreted in \cite{FFR91} and \cite{H96} as a sum of a vertex algebra and a module, with a multiplication operation defined using intertwining operators.  More recent work includes H\"ohn's reformulation of the extension problem in terms of three dimensional topological field theory \cite{H03}, and Huang-Kirillov-Lepowsky's correspondence between extensions of a regular VOA $V$ and commutative associative algebras in the vertex tensor category of $V$-modules \cite{HKL14}.

Our goal is to prove existence statements in situations where we have very little information about the fine structure of modules and intertwining operators.  We find sufficient conditions for the compatibility of intertwining operators under composition, such that the sum of modules is a vertex algebra.  From there, we seek ways to guarantee that these conditions are satisfied in as automatic a fashion as possible.  In particular, the last part of the paper is concerned with assembly from a minor generalization of simple currents, which are modules whose intertwining operators are ``as nice as possible''.

Simple current extensions were described for vertex operator algebras in \cite{DLM96}, following their introduction in the physics literature in \cite{SY89}.  Similar ideas were articulated earlier in \cite{FG88} and \cite{GW86}.  The conditions introduced in \cite{SY89} section 2 are the following: currents have simple monodromy (i.e., the intertwining operators are integral-weight and all OPEs expand in integral powers of $(z-w)$), and all currents have unique fusion rules with all other primary fields.  Translating this to vertex algebra language, we say that a $V$-module $M$ is a simple current if for any irreducible $N$, there exists a unique irreducible $X$ such that the spaces of intertwining operators are described by $I_V\binom{Y}{M,N} = \Hom_V(X,Y)$ for all $V$-modules $Y$.

There is substantial previous work in this area, with some limited existence results from twisting by weight one semi-primary elements via the operator $\Delta(z)$ (e.g., \cite{DLM96} and \cite{L97}), and much more involving explicit constructions from codes via tensor products of the Ising model (starting with \cite{M96} and \cite{DGH98}).  In these cases, the intertwining operators are explicitly known, so the existence question amounts to verifying the Jacobi identity.  More work has been done concerning the properties of such extensions: Representation theory via induced modules (under two different definitions of induced module) is studied in \cite{M98}, \cite{DL96}, \cite{L01}, and \cite{Y04}, and automorphism groups of simple current extensions are examined in \cite{S07} and \cite{GL12}.

Work on existence has been hampered by the fact that the simplest case, given by an order 2 extension, contains essentially all of the subtle complexity of the general case.  Indeed, our methods of reduction solve all problems except those that we see in the order 2 case.  More specifically, in this paper, we show that simple currents parametrized by an abelian group $A$ yield a vertex algebra as long as an evenness obstruction vanishes, and we reduce this obstruction to the triviality of a quadratic form on the elementary abelian 2-group $A/2A$.  For regular vertex operator algebras, Proposition 2.6 in \cite{LY08} reduces the existence problem in this case to the hypothesis that the canonical contragradient form on an order 2 simple current is symmetric.  While we have been unable to show that this assumption holds in general, it does hold in many cases, so we are led to the following:

\textbf{Evenness conjecture:} For any simple self-dual integral-weight module $M$ over a simple $C_2$-cofinite M\"obius vertex algebra with invariant bilinear form, the contragradient bilinear form on $M$ is symmetric.

Assuming this conjecture, we can conclude that simple current extensions always exist.  Removing the hypothesis of integral weight, we conjecture that the balancing isomorphism, given by braiding any simple current against itself, is the same as multiplication by $e^{2\pi i L(0)}$.  This general statement is also far from new, for example, it is asserted without proof for the case of modular tensor categories associated to rational CFTs in \cite{FRS04} Remark 2.15.  The analogous result in the theory of local conformal nets on $S^1$ is known as the conformal spin-statistics theorem (Theorem 3.13 of \cite{GL96}), and it was used to produce a simple current extension result (Lemma 2.1 of \cite{KL06}).  Unfortunately, a translation of Guido-Longo's argument to vertex algebra language does not seem to be straightforward.

There has been substantial progress on the evenness question, both positive and negative, after the main text of this paper was written, and it seems evenness is a much richer property than I had anticipated.  The first result in the positive direction is in \cite{vEMS}, where the evenness conjecture is shown to hold for simple rational $C_2$-cofinite self-contragradient vertex operator algebras of CFT type whose modules have group-like fusion and positive $L(0)$-spectrum.  On the negative side, Thomas Creutzig has informed me that the evenness conjecture as stated is in fact too strong, and that symplectic fermions (as an integer-graded odd simple current extension of the triplet VOA $\cW(2)$) give a counterexample even in the case of a $C_2$-cofinite vertex operator algebra.  However, there is a more general positive result in \cite{CKL15}, that the conjecture holds for vertex operator algebras that are rational, $C_2$-cofinite, and unitary.  We would be very interested to see additional results concerning satisfactory conditions for evenness and counterexamples.

The main engine behind this paper is a homological obstruction theory, using spaces of intertwining operators to construct a canonical element of $H^4(K(A,2),\bC^\times)$, for $A$ a parametrizing abelian group, and $K(A,2)$ the second Eilenberg-Mac Lane space (i.e., a 2-fold delooping).  The cohomology theory in question is far from new, dating to Mac Lane's 1950 ICM address, and its application to vertex algebra theory is also over 20 years old, starting with its role in classifying types of abelian intertwining algebras in \cite{DL93}.  We also see an application in Proposition 3.29(ii) of \cite{FRS04}, characterizing commutative Schellekens algebras in terms of the vanishing of an element of $H^4(K(A,2),\bC^\times)$.  
However, the inherent complexity of the manipulations of 4-cocycles and power series in multiple variables seems to have limited its uses, so much of our work in this paper is an attempt make such calculations avoidable in the future.

Our application of $H^4(K(A,2),\bC^\times)$ to the theory of vertex algebras differs from previous work in the following important sense: We obtain abstract existence, without requiring a collection of multiplications to be chosen ahead of time.  In contrast, \cite{DLM96} uses an explicit choice of intertwining operators, and \cite{H03} requires one to start with an abelian intertwining algebra before cutting out a subspace.  This difference is why we have to do the messy work of describing associativity and commutativity data, and why the evenness problem appears.

There are certainly plenty of directions to go beyond the content of this paper.  For example extensions by modules that aren't simple currents are a topic of active interest, especially among people seeking constructions of holomorphic VOAs of central charge 24 following Schellekens's list of candidates \cite{S93}.  The formalism of this paper allows for a minor incursion into this domain: One has a non-orbifold decomposition of the holomorphic $E_{8,1}$ VOA into a direct sum of the $G_{2,1} F_{4,1}$ VOA and an irreducible module $X$ with fusion rule $X^2 = X + 1$.  While $X$ is not a simple current, we may, by setting the intertwining operator $X \otimes X \to X((z))$ to zero, form a vertex algebra $1 + X$ with the same character as $E_{8,1}$, but where $G_{2,1} F_{4,1}$ is the algebra of fixed points of an involution.  Also, one may wish to go beyond the integral weight constraint, and construct generalized vertex algebras, where locality comes with a phase defect.  Indeed, our original motivation for this work was in generalized moonshine, where we sought to build Lie algebras by assembling twisted modules of $V^\natural$ into abelian intertwining algebras.  A special case of that construction in the order 2 case was done in \cite{H96} (modulo the sketchy proof of Theorem 3.8), and also in \cite{H12} for a different involution.  The assembly problem has been fully solved in \cite{vEMS}, and we used it in \cite{GM4} to prove Norton's Generalized Moonshine conjecture.  This paper was split off of an earlier version of that paper (which was both longer and conditional on some unavailable results) when I was informed that the integral weight case held independent interest.

\subsubsection{Main results}

Our first result gives a general sufficient criterion for existence of a vertex algebra structure.  It necessarily uses some cumbersome terminology, e.g., a commutativity datum is a collection of 1-dimensional spaces of intertwining operators that satisfy some niceness conditions, and $\cT$ is a type (e.g., weighted, quasiconformal, equipped with $G$-action).

\begin{thmm}[\ref{thm:obstruction}]
Let $V$ be a $\cT$-vertex algebra, let $A$ be an abelian group, and let $\{ M_i \}_{i \in A}$ be a set of $V$-modules in $\cT$, such that $M_0 = V$.  Given a one dimensional $\cT$-commutativity datum, and any normalized choice of nonzero elements $\{ m^{i,j}_z \in \cI_{i,j}^{i+j} \}_{i,j \in A}$, the following hold:
\begin{enumerate}
\item The action of $C^2_{ab}(A,\bC^\times)$ on $\bigoplus \cI_{i,j}^{i+j}$ induces a translation action $(\{\lambda_{i,j}\}, (F, \Omega))\mapsto (d\lambda \cdot (F, \Omega))$ on the group of abelian 3-cocycles.  This action is transitive on representatives of any fixed cohomology class in $H^3_{ab}(A,\bC^\times)$, with stabilizer given by the group of abelian 2-cocycles $\{\lambda_{i,j} \}$.
\item The function $A \to \pm 1$ defined by $i \mapsto \Omega(i,i)$ is a quadratic form invariant under $\lambda$-twist by 2-cochains.  In particular, the abelian cohomology class of $(F, \Omega)$ is canonically attached to the commutativity datum.
\item If $\{M_i\}$ is $\cT$-compatible, then the quadratic form $i \mapsto \Omega(i,i)$ is identically one if and only if there exists a normalized 2-cochain $\lambda$ such that $\{ \lambda_{i,j} m^{i,j}_z \}$ describe an $A$-graded $\cT$-vertex algebra structure on $\bigoplus_{i \in A} M_i$.
\item The $A$-graded $\cT$-vertex algebra structure on $\bigoplus_{i \in A} M_i$ is unique up to isomorphism if it exists.
\end{enumerate}
\end{thmm}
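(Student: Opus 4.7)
The plan is to reduce the whole statement to the classical theorem of Eilenberg--Mac Lane identifying $H^3_{ab}(A, \bC^\times)$ with the group of $\bC^\times$-valued quadratic forms on $A$, with the bulk of the work being a careful translation of the vertex-algebraic locality condition into the statement that a certain $3$-cocycle built from the scalars $F$ and $\Omega$ is cohomologous to the trivial cocycle. The existence of the commutativity datum ensures that rescaling $m^{i,j}_z$ by scalars $\lambda_{i,j}$ will only modify $F$ and $\Omega$ by the abelian coboundary $d\lambda$, which is what makes the reformulation possible.

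For part (1), I would first check directly from the definitions of $F$ and $\Omega$ that rescaling $m^{i,j}_z$ by $\lambda_{i,j}$ multiplies the associativity constant $F(i,j,k)$ by $\lambda_{j,k}\lambda_{i,j+k}\lambda_{i,j}^{-1}\lambda_{i+j,k}^{-1}$ and the commutativity constant $\Omega(i,j)$ by $\lambda_{j,i}\lambda_{i,j}^{-1}$; this is precisely the abelian coboundary $d\lambda$. Once this is established, the transitivity of the action on a fixed class and the identification of the stabilizer with $Z^2_{ab}(A, \bC^\times)$ are formal. For part (2), the assignment $(F, \Omega) \mapsto (i \mapsto \Omega(i,i))$ is the Mac Lane trace; the hexagon axioms for an abelian $3$-cocycle give that $\Omega(i+j, i+j)/(\Omega(i,i)\Omega(j,j))$ equals the symmetric bimultiplicative combination $\Omega(i,j)\Omega(j,i)$, forcing $\Omega(-,-)$ to be diagonal quadratic. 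Invariance under the action of part (1) is immediate since $(d\lambda)(i,i) = \lambda_{i,i}/\lambda_{i,i} = 1$, so the quadratic form descends to the cohomology class and is intrinsic to the datum.

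Part (3) is the heart of the argument. By parts (1)--(2), the question of whether some rescaling $\lambda_{i,j} m^{i,j}_z$ produces a truly local, associative family of intertwining operators translates into whether the $3$-cocycle $(F, \Omega)$ can be trivialized by a $d\lambda$, which by Eilenberg--Mac Lane is equivalent to $i \mapsto \Omega(i,i)$ being identically $1$. For the reverse direction, an existing vertex algebra structure gives pairwise local and associative intertwiners at the scalar level, so $d\lambda \cdot (F, \Omega)$ is the trivial cocycle and in particular $\Omega(i,i) = 1$. The forward direction is more delicate: once the $3$-cocycle is normalized to $1$ by choosing $\lambda$, the $\cT$-compatibility hypothesis on $\{M_i\}$ is needed to upgrade the scalar-level identities between compositions of intertwining operators to the operator-level axioms (weak associativity, Jacobi-type identities) defining a $\cT$-vertex algebra structure on $\bigoplus M_i$. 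I expect this upgrade to be the main obstacle, since one must propagate scalar-level coherence from triples across the infinite family of possible products and ensure that the $\cT$-structure (weights, quasiconformal data, etc.) is preserved.

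For part (4), suppose $\lambda$ and $\lambda'$ both yield vertex algebra structures on $\bigoplus M_i$. Their ratio $\mu = \lambda/\lambda'$ is a normalized $2$-cocycle, since both cochains kill the same fixed $(F, \Omega)$. A candidate isomorphism acts on each summand $M_i$ by a scalar $\nu_i \in \bC^\times$, because each $M_i$ is simple so $\uAut_V(M_i) = \bC^\times$, and the constraint $M_0 = V$ forces $\nu_0 = 1$. The induced effect on intertwining scalars is multiplication by $d\nu$, so the question reduces to whether $\mu$ is a $2$-coboundary. Locality in both structures forces $\lambda_{i,j}/\lambda_{j,i} = \lambda'_{i,j}/\lambda'_{j,i} = \Omega(i,j)^{-1}$, so $\mu$ is symmetric, and normalized symmetric $2$-cocycles with values in the divisible group $\bC^\times$ vanish in cohomology (they represent classes in $\Ext(A, \bC^\times) = 0$). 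This produces the required $\nu$ and hence an isomorphism of $A$-graded $\cT$-vertex algebras.
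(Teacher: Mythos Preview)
Your proposal is correct and follows essentially the same line as the paper's proof, which packages parts (1)--(3) as consequences of Proposition~\ref{prop:get-cocycle}, Proposition~\ref{prop:criterion-for-existence}, and the Eilenberg--Mac Lane identification (Lemma~\ref{lem:cocycle-trace}), and handles part (4) via $H^2_{ab}(A,\bC^\times)=0$ (Lemma~\ref{lem:degree-2-abelian-cohomology}), which is exactly your $\Ext(A,\bC^\times)=0$ argument.

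Two small corrections. In part (3), the role of $\cT$-compatibility is not to ``upgrade scalar-level identities to operator-level axioms''---that upgrade is automatic, since the commutativity datum already asserts that the relevant composites of intertwining operators agree as maps to $M_{i+j+k}[[z,w]][z^{-1},w^{-1},(z-w)^{-1}]$ up to the scalar $F(i,j,k)$ (respectively $\Omega(i,j)$), so setting those scalars to $1$ gives locality and associativity on the nose. The $\cT$-compatibility hypothesis is only there to ensure $\bigoplus_i M_i$ is an object of $\cT$ (e.g., finite-dimensional weight spaces in the VOA case); this is the content of Proposition~\ref{prop:criterion-for-existence}. In part (4), you do not need and the theorem does not assume that the $M_i$ are simple: you are not classifying all $A$-graded isomorphisms, only exhibiting one, and the scalar maps $\nu_i \cdot \mathrm{id}_{M_i}$ suffice for that regardless of simplicity.
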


Our second result requires an evenness condition on simple currents (and in particular the elements of the fusion ring parametrizing them).

\begin{thmm}[\ref{thm:even-units-yield-extension}]
Let $V$ be a $C_2$-cofinite simple M\"obius vertex algebra with finite dimensional weight spaces and invariant bilinear form.  For any collection of $V$-modules parametrized by a group of even units in the fusion ring of $V$, the direct sum has the structure of a simple current extension, and this structure is unique up to isomorphism.  The simple current extension is a $C_2$-cofinite M\"obius vertex algebra with finite dimensional weight spaces.  If $V$ is also a rational vertex operator algebra, then the simple current extension is also rational, i.e., it is a simple regular vertex operator algebra.
\end{thmm}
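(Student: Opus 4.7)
The plan is to apply Theorem~\ref{thm:obstruction} to the collection $\{M_i\}_{i\in A}$ of simple currents, and then verify that the structural properties of $V$ are inherited by the resulting extension. First I would set up the commutativity datum. Since $V$ is $C_2$-cofinite with finite dimensional weight spaces, the set of isomorphism classes of simple $V$-modules is finite, so the group $A$ of units in the fusion ring is finite. The simple-current property of each $M_i$ (equivalently, $[M_i]$ is a unit) forces $\dim \cI_{i,j}^{i+j} = 1$ for all $i,j \in A$ and guarantees that fusion with $M_i$ permutes simples. Choose a normalized family $\{m^{i,j}_z\}$ of nonzero intertwining operators; compatibility (convergence and associativity of composites, along with the braiding/balancing data) is then provided by the vertex tensor category machinery available under these hypotheses, and integrality of conformal weight differences within $A$ forces honest locality rather than a phase-twisted variant.

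The crux of the argument is to show that the evenness obstruction $i \mapsto \Omega(i,i)$ is trivial. By Theorem~\ref{thm:obstruction}(2), this is a $\{\pm 1\}$-valued quadratic form on $A$, hence factors through $A/2A$, so it suffices to evaluate it on $2$-torsion classes. The hypothesis that $A$ consists of \emph{even} units is tailored exactly for this: for each self-dual simple current $M_i$ with $2i = 0$, evenness asserts symmetry of the canonical contragradient bilinear form on $M_i$, and by the argument of \cite{LY08} Proposition~2.6 this symmetry is equivalent to $\Omega(i,i)=1$. Since the quadratic form vanishes on generators, it vanishes identically, so Theorem~\ref{thm:obstruction}(3,4) produces the desired $A$-graded $\cT$-vertex algebra structure on $\bigoplus_{i\in A} M_i$ uniquely up to isomorphism.

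The remaining inheritance statements are largely bookkeeping. Finite dimensionality of weight spaces and $C_2$-cofiniteness for $\bigoplus_{i\in A} M_i$ follow at once from finiteness of $A$ together with the corresponding properties for each $M_i$. The M\"obius action and invariant bilinear form extend because the chosen intertwining operators are compatible with both. Simplicity of the extension follows from a standard ideal argument: a nonzero ideal meets some $M_i$ in a nonzero $V$-submodule, which must equal $M_i$ by simplicity, and then multiplication by $m^{-i,j}_z$ pushes the ideal into every $M_j$. Assuming $V$ is rational, any module for the extension restricts to a semisimple $V$-module (because $V$ is rational and $A$ is finite), and a Frobenius-reciprocity argument for induction along the inclusion $V \hookrightarrow \bigoplus_{i\in A} M_i$ shows such modules are themselves semisimple; combined with $C_2$-cofiniteness this gives regularity. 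The main obstacle, and the step on which the entire theorem pivots, is the evenness check, and the hypothesis on even units is what sidesteps (rather than resolves) the open conjecture discussed in the introduction.
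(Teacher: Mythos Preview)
Your overall architecture matches the paper's: set up the commutativity datum using the Huang--Lepowsky--Zhang tensor theory, verify the evenness obstruction vanishes, apply Theorem~\ref{thm:obstruction}, and then check inheritance of $C_2$-cofiniteness (via \cite{Y04}) and rationality (via \cite{L01}). The paper packages the first three steps into Proposition~\ref{prop:even-homs-yield-vertex-algebras} and simply cites it.

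However, your treatment of the evenness step contains a genuine confusion. In this paper, a unit is \emph{even} precisely when the intertwining operator $M_i\otimes M_i\to (M_i\boxtimes M_i)((z))$ is even in the sense of the definition preceding Lemma~\ref{lem:evenness}; this requires nothing about $M_i$ being self-dual, and by Lemma~\ref{lem:evenness} it is literally the statement $\Omega(i,i)=1$. Since the hypothesis says \emph{every} element of $A$ is an even unit, the quadratic form is identically $1$ with no further work. Your detour through self-dual currents and contragradient forms confuses this with the setting of Proposition~\ref{prop:evenness-equivalent-to-symmetric-form}, which is only relevant when $M_i\cong M_i^\vee$.

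More seriously, the restriction to elements with $2i=0$ does not suffice. The $2$-torsion subgroup $A[2]$ need not surject onto $A/2A$ (take $A=\bZ/4\bZ$: then $A[2]=\{0,2\}$ maps to $0$ in $A/2A$), so knowing $\Omega(i,i)=1$ for $2i=0$ tells you nothing about the nontrivial coset. And even on $A/2A$ itself, a $\{\pm1\}$-valued quadratic form is \emph{not} determined by its values on a generating set: on $(\bZ/2\bZ)^2$ the form $q(a,b)=(-1)^{ab}$ is $1$ on both standard generators but $-1$ on their sum (cf.\ the Remark after Proposition~\ref{prop:even-homs-yield-vertex-algebras}). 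So the sentence ``Since the quadratic form vanishes on generators, it vanishes identically'' is false as stated. The repair is to drop the restriction entirely and use the hypothesis on every $i\in A$.
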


Our third result is conditional on the evenness conjecture.

\begin{thmm}[\ref{thm:conditional-result}]
Let $V$ be a $C_2$-cofinite simple M\"obius vertex algebra with finite dimensional weight spaces and invariant bilinear form.  Assuming the evenness conjecture, any collection of integral-weight modules parametrized by a group of units in the fusion ring of $V$ admits the structure of a simple current extension on the direct sum.  The simple current extension is a simple $C_2$-cofinite M\"obius vertex algebra, and it is unique up to isomorphism.  If $V$ is also a rational vertex operator algebra, then the simple current extension is also rational, i.e., it is a simple regular vertex operator algebra.
\end{thmm}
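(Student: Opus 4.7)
The plan is to deduce this theorem from Theorem~\ref{thm:even-units-yield-extension} by showing that, under the evenness conjecture, every group of integral-weight units in the fusion ring of $V$ automatically consists of even units. Once that reduction is in hand, existence, uniqueness, and all structural conclusions ($C_2$-cofiniteness, M\"obius structure, finite-dimensional weight spaces, and rationality when $V$ is rational) follow directly by applying Theorem~\ref{thm:even-units-yield-extension} to the same group.

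To set this up, let $A$ denote the given group of units with associated integral-weight simple currents $\{ M_i \}_{i \in A}$. Under the well-behaved contragradient and tensor-product hypotheses on $V$, the framework developed earlier in the paper supplies a canonical one-dimensional commutativity datum on $\{M_i\}$, and Theorem~\ref{thm:obstruction} then reduces the entire obstruction to a vertex algebra structure on $\bigoplus_{i \in A} M_i$ to the triviality of the quadratic form $q \colon A \to \{\pm 1\}$ given by $i \mapsto \Omega(i,i)$. Because $q$ takes values in a $2$-torsion group, it factors through $A/2A$, so it suffices to verify $q(i) = +1$ for every $i \in A$ with $2i = 0$.

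For such a $2$-torsion element $i$, the fusion relation $M_i \boxtimes M_i \cong V$, together with uniqueness of fusion products for simple currents, forces the contragradient $M_i'$ to be isomorphic to $M_i$. Hence $M_i$ is a simple self-dual integral-weight module over the simple $C_2$-cofinite M\"obius vertex algebra $V$ with invariant bilinear form, which is exactly the hypothesis of the evenness conjecture. The conjecture then guarantees that the canonical contragradient bilinear form on $M_i$ is symmetric, and via the identification (encoded in the obstruction framework and going back to Proposition~2.6 of \cite{LY08}) between symmetry of this contragradient bilinear form and the value $\Omega(i,i) = +1$, we conclude $q(i) = +1$. Since this holds on all $2$-torsion, $q$ is identically trivial on $A$, so $A$ is a group of even units and Theorem~\ref{thm:even-units-yield-extension} applies to give all the asserted properties of the simple current extension.

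The main obstacle I anticipate is bookkeeping rather than conceptual: pinning down the precise identification between the $\pm 1$ parity of the contragradient bilinear form on a self-dual integral-weight simple current and the value $\Omega(i,i)$ in the chosen normalization of intertwining operators, and confirming that ``even units'' in the sense used in Theorem~\ref{thm:even-units-yield-extension} agrees with $q(i) = +1$ here. This identification is precisely how the paper has already massaged the cohomological obstruction to the existence of simple current extensions into a statement about bilinear forms on order-$2$ currents, so once the correspondence is lined up, the argument reduces to citing the evenness conjecture and invoking the previous theorem.
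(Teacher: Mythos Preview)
Your reduction has a genuine gap. You correctly note that the quadratic form $q$ factors through $A/2A$, but you then assert that it suffices to check $q(i)=1$ for all $i$ in the $2$-torsion subgroup $A[2]=\{i\in A:2i=0\}$. These are different things: to determine $q$ you must evaluate it on a representative of each coset of $2A$, and such representatives need not be $2$-torsion. For instance, if $A=\bZ/4\bZ$ then $2A=\{0,2\}=A[2]$, and $q$ is automatically $1$ there; the nontrivial coset is $\{1,3\}$, whose elements are not $2$-torsion, so $M_1$ is \emph{not} self-dual as a $V$-module (its contragradient is $M_3$), and the evenness conjecture as stated says nothing about it. Thus your argument only shows $q|_{A[2]}=1$, which was already known, and does not force $q\equiv 1$.

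The paper repairs this with a two-step extension. First it builds the simple current extension $W_{2A}$ over the subgroup $2A$, which always exists unconditionally (Corollary~\ref{cor:2A-is-even} and Proposition~\ref{prop:even-homs-yield-vertex-algebras}). The coset sums $M_{2A+i}$ then become $W_{2A}$-modules indexed by the $\bF_2$-vector space $A/2A$, and over $W_{2A}$ each of them \emph{is} self-dual. Now the evenness conjecture is applied with $W_{2A}$, not $V$, as the base vertex algebra (which is again simple, $C_2$-cofinite, M\"obius by Theorem~\ref{thm:even-units-yield-extension}), yielding symmetry of the contragradient form on each $M_{2A+i}$; Corollary~\ref{cor:reduction-to-contragradient-form} converts this into evenness of the induced commutativity datum on $A/2A$. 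The missing idea in your proposal is precisely this change of base from $V$ to $W_{2A}$, which is what makes every relevant module self-dual and puts the conjecture within reach.
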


\subsubsection{Acknowledgements}

The author would like to thank Tomoyuki Arakawa, Bruce Bartlett, Richard Borcherds, Thomas Creutzig, Yoshitake Hashimoto, Andr\'e Henriques, Gerald H\"ohn, Satoshi Kondo, Ching-Hung Lam, Jacob Lurie, Chris Schommer-Pries, Hiroki Shimakura, Hiromichi Yamada, and the anonymous referee for helpful conversations and advice.

This research was partly supported by NSF grant DMS-0354321,  JSPS Kakenhi Grant-in-Aid for Young Scientists (B) 24740005, the Program to Disseminate Tenure Tracking System, MEXT, Japan, and the World Premier International Research Center Initiative (WPI Initiative), MEXT, Japan.

\section{Notation and definitions}

\subsection{Formal power series}

We will describe basic aspects of power series.

Concretely, we will encounter situations similar to the following: we are given maps from a vector space $W$ to the formal power series vector spaces $V((z))((w))$ and $V((w))((z))$, and we would like to say that both maps factor through a common subspace $V[[z,w]][(z-w)^{-1}]$.  To make sense of such a claim, we need to make an unambiguous choice of inclusions from the latter vector space to the former two.  Since this paper is only concerned with power series that have integer exponents, there isn't much ambiguity in the first place.

The methods of formal calculus (see e.g., \cite{FLM88} Chapter 8) provide one way to make such a selection: To expand in the first space, we write $(z-w)^n$ as $z^n(1-w/z)^n$, and use the binomial theorem to get the power series $z^n \sum_{k\geq 0} \binom{n}{k} w^k/z^k$.  To expand in the second space, we identify $(z-w)^n$ with $(-(w-z))^n$, expand as $(-1)^n w^n(1-z/w)^n$, and apply the binomial theorem as before.

Because we only need power series in three variables in this paper, we will not formulate a completely general theory.  Instead, we will explicitly enumerate the embeddings we will need, and we will check that our conventions are consistent.

\begin{defn}
\begin{enumerate}
\item We consider power series only in the 6 fundamental coordinates $z$, $w$, $t$, $z-w$, $z-t$, and $w-t$.
\item Given a fundamental coordinate such as $z$, we use the notation $V \mapsto V[[z^{\pm 1}]]$, $f \mapsto f(z)$ to denote the endofunctor on the category of complex vector spaces that takes a vector space $V$ to the space $V[[z^{\pm 1}]]$ of integral formal power series in $z$ with coefficients in $V$, and takes a complex linear transformation $f$ to the corresponding map of power series induced by applying the transformation to each coefficient.  Elements of $V[[z^{\pm 1}]]$ can be identified with set-theoretic maps $\bZ \to V$, by identifying the coefficient of $z^n$ with the image of $n \in \bZ$ under the map.
\item Given an algebraically independent set of fundamental coordinates, we may iterate this functor to get spaces like $V[[z^{\pm 1}, (z-w)^{\pm 1}, (w-t)^{\pm 1}]]$, in this case identifiable with the space of set-theoretic maps $\bZ^3 \to V$.  All other spaces we consider will be identified as subspaces of these.
\item The formal Taylor series functor $V \mapsto V[[z]]$ takes a vector space $V$ to the space of formal Taylor series in $z$ with coefficients in $V$, which can be identified with the set of maps $\bZ \to V$ with support in $\bZ_{\geq 0}$.  Similarly, the polynomial functor $V \mapsto V[z]$ yields the subspace whose elements correspond to maps that are finitely supported, with support in $\bZ_{\geq 0}$.
\item The formal Laurent series functor $V \mapsto V((z))$ yields the space whose elements correspond to maps $\bZ \to V$ supported on $\bZ$, with support that is bounded from below.  Note that $V((z))((w))$ and $V((w))((z))$ do not form equal subspaces of $V[[z^{\pm 1},w^{\pm 1}]]$, since the powers of $w$ in the former space are bounded below, and the powers of $z$ in the latter space are bounded below, but not vice versa.
\item Partial derivative operators on series spaces act on individual fundamental coordinates.  To clarify by example, the operator $\partial_w$, as defined on $V((w))((z-w))$, takes $w^m(z-w)^n$ to $mw^{m-1}(z-w)^n$, not to $mw^{m-1}(z-w)^n - nw^m(z-w)^{n-1}$.
\end{enumerate}
\end{defn}

The reader should note that we may simplify the notation for maps when it is convenient to do so.  For example, if $L(-1): V \to V$ is a linear endomorphism of a vector space, we may denote the corresponding endomorphism on $z^nV[[z]]$ by $L(-1)$ instead of $z^nL(-1)[[z]]$.  We apologize for any confusion this may cause.

\begin{defn}
Because we are only using integral powers in this paper, we will not impose strong ordering conventions.  However, we will prefer $z-w$ to $-(w-z)$, so we will write $V((z-w))$ instead of $V((w-z))$, even though they are the same space.  We treat $w-t$ and $z-t$ similarly, i.e., one may think of this as an ordering $z \succ w \succ t$.  We obtain the following conventions.
\begin{enumerate}
\item $(z-w)^n = z^n(1-w/z)^n \in z^n \bC[z^{-1}][[w]] \subset \bC((z))((w))$, but we expand $(z-w)^n$ in $w^n \bC[w^{-1}][[z]] \subset \bC((w))((z))$ as $(-1)^n w^n(1-z/w)^n$.
\item We expand $w^n$ in $z^n \bC[z^{-1}][[z-w]] \subset \bC((z))((z-w))$ as $z^n(1-\frac{z-w}{z})n$, but we expand $z^n$ in $w^n \bC[w^{-1}][[z-w]] \subset \bC((w))((z-w))$ as $w^n(1+\frac{z-w}{w})^n$.
\item We expand $(z-t)^n$ in $(w-t)^n \bC[(w-t)^{-1}][[z-w]] \subset \bC((w-t))((z-w))$ as $(w-t)^n(1+\frac{z-w}{w-t})^n$.  Similarly, we expand $(z-w)^n$ in $(z-t)^n \bC[(z-t)^{-1}][[w-t]] \subset \bC((z-t))((w-t))$ as $(z-t)^n(1-\frac{w-t}{z-t})^n$.
\item We expand $(w-t)^n$ as $(-1)^n(z-w)^n(1-\frac{z-t}{z-w})^n$ in $(z-w)^n\bC[(z-w)^{-1}][[z-t]] \subset \bC((z-w))((z-t))$.
\end{enumerate}
\end{defn}

We define some functors from the category of vector spaces to itself using a subscript convention.  Geometrically, the subscripts describe boundary strata in a partial compactification of a moduli space of genus zero curves with marked points in the following sense: We will consider objects labeled by $i$, $j$, $k$, and $\ell$ inserted at complex points $z$, $w$, $t$, and $0$, respectively.  However, when two points collide, a complex line ``bubbles off'' at the place of their collision, and we more or less view the two insertions as lying at the same point in $\bC$ (which, by convention, we have chosen to be the point attached to the latter label), but at different points in the ``bubble''.  The corresponding vector spaces will be formal Laurent power series in the differences, and we enclose the two labels in parentheses (but we omit the parentheses enclosing the whole subscript).  For example, we have the following six formal Laurent series spaces in the fundamental variables: $V_{k\ell} = V((t))$, $V_{j\ell} = V((w))$, $V_{i\ell} = V((z))$, $V_{ik} = V((z-t))$, $V_{ij} = V((z-w))$, and $V_{jk} = V((w-t))$.  When three or more points collide, we take formal Taylor series in the differences, and invert all of the fundamental coordinates that appear - see the next definition.  We use square brackets to indicate when a set of indices may be freely permuted without changing the resulting space.

\begin{defn}
For any vector space $V$, we define $V[[z,w]][z^{-1}, w^{-1}, (z-w)^{-1}]$ as the subspace of $V((z))((w))$ in which multiplication by a sufficiently large integer power of $zw(z-w)$ yields an element of $V[[z,w]]$. This yields a subfunctor of $V \mapsto V((z))((w))$.  We write $V_{[ij]\ell} = V[[z,w]][z^{-1}, w^{-1}, (z-w)^{-1}]$, and the embedding $V[[z,w]][z^{-1}, w^{-1}, (z-w)^{-1}] \to V((z))((w))$ is written $V_{[ij]\ell} \to V_{i(j\ell)}$.  Here, the square brackets in the subscript indicate that switching $z$ and $w$ yields a naturally isomorphic functor.
\end{defn}

\begin{lem} \label{lem:basic-embeddings}
The conventions listed above yield the following embeddings:
\begin{enumerate}
\item $V_{[ij]\ell} \to V_{i(j\ell)}$, meaning $V[[z,w]][z^{-1}, w^{-1}, (z-w)^{-1}] \to V((z))((w))$
\item $V_{[ij]\ell} \to V_{j(i\ell)}$, meaning $V[[z,w]][z^{-1}, w^{-1}, (z-w)^{-1}] \to V((w))((z))$
\item $V_{[ij]\ell} \to V_{(ij)\ell}$, meaning $V[[z,w]][z^{-1}, w^{-1}, (z-w)^{-1}] \to V((w))((z-w))$
\item $V_{[ij]\ell} \to V_{(ji)\ell}$, meaning $V[[z,w]][z^{-1}, w^{-1}, (z-w)^{-1}] \to V((z))((z-w))$
\item $V_{[ij]k} \to V_{i(jk)}$, meaning $V[[z-w,w-t]][(z-w)^{-1},(z-t)^{-1},(w-t)^{-1}] \to V((z-t))((w-t))$
\item $V_{[ij]k} \to V_{(ij)k}$, meaning $V[[z-w,w-t]][(z-w)^{-1},(z-t)^{-1},(w-t)^{-1}] \to V((w-t))((z-w))$
\end{enumerate}
\end{lem}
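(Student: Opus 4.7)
The plan is to present each source as a module over a localization of formal power series, expand each denominator according to the prescribed conventions, and verify injectivity via an integral-domain argument in the target.

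First I would observe that every element of $V[[z,w]][z^{-1}, w^{-1}, (z-w)^{-1}]$ admits a representation $(zw(z-w))^{-N} f$ with $f \in V[[z,w]]$ and $N \geq 0$, directly from the defining condition that multiplication by a sufficiently large power of $zw(z-w)$ lands in $V[[z,w]]$. An analogous statement holds for $V[[z-w, w-t]][(z-w)^{-1}, (z-t)^{-1}, (w-t)^{-1}]$ with denominator $((z-w)(z-t)(w-t))^{N}$. Embedding (1) is then tautological.

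For each remaining target, I would define the map by expanding the denominator using the binomial-theorem conventions fixed in the preceding definition and then multiplying by $f$. Two things need to be checked to make this well-defined: that the expansion of each denominator lies in an appropriate ``bounded below'' subspace of the target (for instance in $\bC[[z^{-1}]][w^{-1}][[w]]$ for target (2)), so that multiplying by any $f \in V[[z,w]]$ produces a well-defined element of the target; and that this expansion really is a multiplicative inverse of the denominator in the target, which is a short binomial calculation done once per target. For example, in $V((w))((z))$ one has $(z-w)\bigl(-\sum_{k \geq 0} z^k w^{-k-1}\bigr) = 1$. Given these two facts, the map is independent of the choice of $N$, since passing from $N$ to $N+1$ multiplies numerator and denominator by the same unit.

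Injectivity follows formally: the scalar versions $\bC((z))((w))$, $\bC((w))((z-w))$, and so on, of each target space are integral domains (indeed fields), so the images of $zw(z-w)$ and of $(z-w)(z-t)(w-t)$ are non-zerodivisors. If $(zw(z-w))^{-N} f$ maps to zero, multiplying by $(zw(z-w))^{N}$ kills the image of $f$; but $V[[z,w]] \hookrightarrow V((z))((w))$ is manifestly injective, as are the analogous coefficient-preserving embeddings for the other targets.

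The principal obstacle, such as it is, is bookkeeping for embeddings (3)--(6), where the source coordinates and target coordinates do not align cleanly. There one must rewrite coordinates like $w = z - (z-w)$ or $w-t = (z-t) - (z-w)$ before expanding the denominator, and verify that doing so in different orders produces the same element of the target. The conventions in the preceding definition were chosen precisely so that these manipulations are unambiguous, so once that choice is trusted, each embedding reduces to a short and mechanical binomial-theorem check.
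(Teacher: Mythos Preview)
Your proposal is correct. The paper's own proof is simply ``Omitted,'' so there is no approach to compare against; you have supplied precisely the kind of routine verification the author deemed unnecessary to spell out.
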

\begin{proof}
Omitted.
\end{proof}

\begin{rem}
There are some additional expansions we need to consider when working with three variables at a time.
\begin{enumerate}
\item We expand $(z-t)^n$ in $\bC[[w,t]][w^{-1},t^{-1},(w-t)^{-1}]((z-w))$ as $((z-w)+(w-t))^n = (w-t)^n(1+\frac{z-w}{w-t})^n$.  Our choice of $(w-t)$ instead of $w$ or $t$ as the distinguished invertible fundamental coordinate is due to the fact that it yields a binomial.
\item Some trinomial expansions are equal but not obviously so.  For example $(1-\frac{w}{z} - \frac{t}{z})^n$ is equal to both $(1- \frac{t}{z})^n (1-\frac{w}{z(1-t/z)})^n$ and $(1-\frac{t(1+w/t)}{z})^n$.  These more complicated expressions may appear when composing two embeddings.
\end{enumerate}
\end{rem}

\begin{lem} \label{lem:star-of-embeddings}
Given a vector space $V$, define the following vector spaces:
\begin{enumerate}
\item $V_{[ijk]\ell} = V[[z,w,t]][z^{-1},w^{-1},t^{-1},(z-w)^{-1},(z-t)^{-1},(w-t)^{-1}]$
\item $V_{i([jk]\ell)} = V((z))[[w,t]][w^{-1},t^{-1},(w-t)^{-1}]$
\item $V_{[ij](k\ell)} = V[[z,w]][z^{-1},w^{-1},(z-w)^{-1}]((t))$
\item $V_{[i(jk)]\ell} = V[[z,t]][z^{-1},t^{-1},(z-t)^{-1}]((w-t))$
\item $V_{[(ij)k]\ell} = V[[w,t]][w^{-1},t^{-1},(w-t)^{-1}]((z-w))$
\item $V_{([ij]k)\ell} = V((t))[[z-w,z-t]][(z-w)^{-1},(z-t)^{-1},(w-t)^{-1}]$
\item $V_{i(j(k\ell))} = V((z))((w))((t))$
\item $V_{i((jk)\ell)} = V((z))((t))((w-t))$
\item $V_{(ij)(k\ell)} = V((w))((t))((z-w))$ -- a referee points out we may also use $V((w))((z-w))((t))$.
\item $V_{(i(jk))\ell} = V((t))((z-t))((w-t))$
\item $V_{((ij)k)\ell} = V((t))((w-t))((z-w))$
\end{enumerate}
Then, our conventions for expansions yield the following commutative diagram of natural transformations in vector spaces:
\[ \xymatrix @-1pc{
& & & V_{i(j(k\ell))} \\
& \\
& & V_{i([jk]\ell)} \ar[uur] \ar[dll] & & V_{[ij](k\ell)} \ar[uul] \ar[drr] \\
V_{i((jk)\ell)} & & & & & & V_{(ij)(k\ell)} \\
& & & V_{[ijk]\ell} \ar[ddd] \ar[drr] \ar[dll] \ar[uul] \ar[uur] \\
& V_{[i(jk)]\ell} \ar[uul] \ar[ddd] & & & & V_{[(ij)k]\ell} \ar[uur] \ar[ddd] \\
& \\
& & & V_{([ij]k)\ell} \ar[dll] \ar[drr] \\
& V_{(i(jk))\ell} & & & & V_{((ij)k)\ell} 
} \]
\end{lem}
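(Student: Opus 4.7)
The plan is to exploit the fact that each of the eleven spaces in the diagram is a subspace of a common ambient space of set-theoretic maps $\bZ^6 \to V$ indexed by monomials in the six fundamental coordinates $z$, $w$, $t$, $z-w$, $z-t$, $w-t$. Under this fixed identification, every edge of the diagram is determined by reexpanding certain fundamental coordinates using the conventions of the preceding definition, so commutativity will reduce to a purely monomial-level verification.

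First I would enumerate the edges. There are three types: (a) edges from the ``polynomial-with-inverses'' space $V_{ijk\ell}$ outward to each of the four mixed spaces $V_{i(jk\ell)}$, $V_{ij(k\ell)}$, $V_{i(jk)\ell}$, $V_{[(ij)k]\ell}$ and to the bottom space $V_{(ijk)\ell}$; (b) edges from each mixed space into an adjacent iterated Laurent space by expanding one additional fundamental coordinate; (c) the final edges into the five terminal iterated Laurent spaces. Each individual arrow is a well-defined embedding of vector spaces by Lemma~\ref{lem:basic-embeddings} together with its obvious three-variable analogues, so the genuine content of the lemma is commutativity, not existence.

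Next I would check commutativity face by face. Fix a basis monomial $z^a w^b t^c (z-w)^p (z-t)^q (w-t)^r$ in $V_{ijk\ell}$ and compare its images along two parallel paths. Because every expansion convention is multiplicative, the six factors may be handled independently; and because each factor involves at most two fundamental coordinates when expanded directly, the only subtle cases are those indicated in the remark, where a factor must be expanded as a binomial in a \emph{sum} of two other fundamental coordinates (for example $(z-t)^q$ expanded as $((z-w)+(w-t))^q$ when the target is $V((w-t))((z-w))$, or $(1-\frac{w}{z}-\frac{t}{z})^n$ rewritten in two different associative ways). All such cases reduce to the claim that in $\bC((Y))((Z))$ the element $(Y+Z)^n$ has the unique expansion $\sum_{k\geq 0} \binom{n}{k} Y^{n-k} Z^k$, and that further expansion of $Y$ or $Z$ in terms of additional coordinates commutes with this expansion.

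The main obstacle is precisely the trinomial verification: two iterated applications of the binomial theorem produce doubly-indexed sums that must agree term by term, and the required identity is a Vandermonde-type relation on binomial coefficients, together with the observation that at each fixed multi-degree only finitely many terms contribute so no convergence issue arises. Once this identity is established in the key case of $(z-t)^q = ((z-w)+(w-t))^q$, the analogous statements for $(z-w)^p$ and $(w-t)^r$ follow by the symmetry among the three difference coordinates, and commutativity of the entire diagram then reduces to bookkeeping over the finitely many faces.
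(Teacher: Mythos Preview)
Your approach is essentially the same as the paper's: reduce to a single monomial $z^a w^b t^c (z-w)^d (z-t)^f (w-t)^g$, push it along the two edges of each face using the expansion conventions, and compare. The paper carries this out explicitly for each of the five diamond faces; two of them close immediately, and the remaining three each reduce to the equality of two iterated binomial expansions of a trinomial such as $(1-\tfrac{t}{w}+\tfrac{z-w}{w})^f$, $(1-\tfrac{t}{z}-\tfrac{w-t}{z})^d$, and $(1+\tfrac{z-w}{t}+\tfrac{w-t}{t})^a$ --- exactly the Vandermonde-type identity you isolate.

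Two places where your write-up should be tightened. First, the ``common ambient space of maps $\bZ^6\to V$'' is not the right framing: the six fundamental coordinates are \emph{not} algebraically independent (e.g.\ $z-t=(z-w)+(w-t)$), the paper only ever forms power-series spaces in algebraically independent triples, and the arrows in the diagram are reexpansion maps, not inclusions into a common superspace. The reduction to monomials is just linearity plus the multiplicativity of the expansion conventions, not an ambient-space argument. Second, your closing appeal to ``symmetry among the three difference coordinates'' is too quick: the ordering convention $z\succ w\succ t$ breaks that symmetry, and the three nontrivial faces involve different exponents ($d,f,a$) and different coordinate triples, so they are not literally permutations of one another. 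You are right that the \emph{algebraic} content is the same identity each time, but the bookkeeping of which coordinates are being expanded in which target space must still be done face by face, as the paper does.
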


\begin{proof}
To check the commutativity of all quadrilaterals, it suffices to expand $v z^a w^b t^c (z-w)^d (z-t)^f (w-t)^g$ along each of the compositions, where $v \in V$ and $a,b,c,d,f,g$ are integers.  Since we may check validity after pulling back all natural transformations along the unique pointed linear map $(\bC,1) \mapsto (V,v)$, we may assume without loss of generality that $V$ is $\bC$, and that $v = 1$.

The inner five arrows yield:
\begin{enumerate}
\item $z^{a+d+f} (1-w/z)^d (1-t/z)^f w^b t^c (w-t)^g \in V_{i([jk]\ell)}$
\item $z^{a+f} (1-t/z)^f w^{b+g} (1-t/w)^g (z-w)^d t^c \in V_{[ij](k\ell)}$
\item $z^a t^{b+c} (1-\frac{w-t}{t})^b (z-t)^{d+f} (1-\frac{w-t}{z-t})^d (w-t)^g \in V_{[i(jk)]\ell}$
\item $w^{a+b} (1+\frac{z-w}{w})^a t^c (w-t)^{f+g} (1+\frac{z-w}{w-t})^f (z-w)^d \in V_{[(ij)k]\ell}$
\item $t^{a+b+c} (1+\frac{z-t}{t})^a (1+\frac{w-t}{t})^b (z-w)^d (z-t)^f (w-t)^g \in V_{([ij]k)\ell}$
\end{enumerate}

We obtain five comparisons:
\begin{enumerate}
\item The top diamond yields $z^{a+d+f} w^{b+g} t^c (1-w/z)^d (1-t/z)^f (1-t/w)^g$.
\item The right diamond compares $w^{a+b+f+g} t^c (z-w)^d (1+\frac{z-w}{w})^{a+f} (1-t/w)^g (1-\frac{t}{w(1+(z-w)/w)})^f$ to $w^{a+b+f+g} (1+\frac{z-w}{w})^a t^c (1-t/w)^{f+g} (1+\frac{z-w}{w(1-t/w)})^f (z-w)^d$.  This amounts to comparing $(1+\frac{z-w}{w})^f (1-\frac{t}{w(1+(z-w)/w)})^f$ to $(1-t/w)^{f} (1+\frac{z-w}{w(1-t/w)})^{f}$, and both are expansions of $(1-\frac{t}{w} + \frac{z-w}{w})^f \in \bC[w^{-1}][[t,z-w]]$.
\item The left diamond compares $z^{a+d+f} t^{b+c} (w-t)^g (1-t/z)^{d+f} (1-\frac{w-t}{t})^b (1-\frac{w-t}{z(1-t/z)})^d$ to $z^{a+d+f} t^{b+c} (w-t)^g (1-t/z)^f (1-\frac{w-t}{t})^b (1-\frac{t(1+(w-t)/t)}{z})^d$.  This amounts to comparing $(1-t/z)^d (1-\frac{w-t}{z(1-t/z)})^d$ to $(1-\frac{t(1+(w-t)/t)}{z})^d$, and both are expansions of $(1- \frac{t}{z} - \frac{w-t}{z})^d$ in $\bC[z^{-1}][[t,w-t]]$.
\item The lower right diamond compares $t^{a+b+c} (w-t)^{f+g} (z-w)^d (1+\frac{w-t}{t})^{a+b} (1 + \frac{z-w}{t(1+(w-t)/t)})^a (1+\frac{z-w}{w-t})^f$ to $t^{a+b+c} (w-t)^{f+g} (z-w)^d (1+\frac{(z-w) + (w-t)}{t})^a (1+\frac{w-t}{t})^b (1- \frac{z-w}{w-t})^f$.  This amounts to comparing $(1 + \frac{z-w}{t(1+(w-t)/t)})^a (1+\frac{w-t}{t})^a$ to $(1+\frac{(z-w) + (w-t)}{t})^a$, and both are expansions of $(1+\frac{z-w}{t} + \frac{w-t}{t})^a$ in $\bC[t^{-1}][[z-w,w-t]]$.
\item The lower left diamond yields $t^{a+b+c} (z-t)^{d+f} (w-t)^g (1+\frac{z-t}{t})^a (1+\frac{w-t}{t})^b (1 - \frac{w-t}{z-t})^d$.
\end{enumerate}
\end{proof}

\begin{lem} \label{lem:octagons-of-embeddings}
In addition to the vector spaces from Lemma \ref{lem:star-of-embeddings}, we define the following spaces:
\begin{enumerate}
\item $V_{j((ki)\ell)} = V((w))((z))((z-t))$
\item $V_{((jk)i)\ell} = V((z))((z-t))((w-t))$
\item $V_{(j(ki))\ell} = V((z))((z-w))((z-t))$
\item $V_{j((ki)\ell)} = V((w))((z))((z-t))$
\item $V_{j((ik)\ell)} = V((w))((t))((z-t))$
\item $V_{(j(ik))\ell} = V((t))((w-t))((z-t))$
\item $V_{((ji)k)\ell} = V((t))((z-t))((z-w))$
\item $V_{(k(ij))\ell} = V((w))((w-t))((z-w))$
\item $V_{((ki)j)\ell} = V((w))((z-w))((z-t))$
\item $V_{((ik)j)\ell} = V((w))((w-t))((z-t))$
\item $V_{(i(kj))\ell} = V((w))((z-w))((w-t))$
\item $V_{i((kj)\ell)} = V((z))((w))((w-t))$
\item $V_{j([ik]\ell)} = V((w))[[z,t]][z^{-1}, t^{-1},(z-t)^{-1}]$
\item $V_{j(ki)\ell} = V[[z,w]][z^{-1}, w^{-1}, (z-w)^{-1}]((z-t))$
\item $V_{j(ik)\ell} = V[[w,t]][w^{-1}, t^{-1},(w-t)^{-1}]((z-t))$
\item $V_{([jk]i)\ell} = V((z))[[z-w,z-t]][(z-w)^{-1}, (z-t)^{-1}, (w-t)^{-1}]$
\item $V_{([ik]j)\ell} = V((w))[[z-w,z-t]][(z-w)^{-1},(z-t)^{-1},(w-t)^{-1}]$
\item $V_{i(kj)\ell} = V[[z,w]][z^{-1}, w^{-1},(z-w)^{-1}]((w-t))$
\item $V_{i([jk]\ell)} = V((z))[[w,t]][w^{-1}, t^{-1}, (w-t)^{-1}]$
\end{enumerate}
Then, we have the following commutative diagrams of embeddings:
\[ \xymatrix @-1pc{
& & V_{j((ki)\ell)} & & V_{j((ik)\ell)} \\
& V_{j(ki)\ell} \ar[ur] \ar[dl] & & V_{j([ik]\ell)} \ar[ul] \ar[ur] & & V_{j(ik)\ell} \ar[ul] \ar[dr] \\
V_{(j(ki))\ell} & & & & & & V_{(j(ik))\ell} \\
& V_{([jk]i)\ell} \ar[ul] \ar[dl] & & V_{[ijk]\ell} \ar[ll] \ar[lluu] \ar[uu] \ar[uurr] \ar[rr] \ar[rrdd] \ar[dd] \ar[ddll] & & V_{([ij]k)\ell} \ar[ur] \ar[dr] \ar@{=}[dd] \\
V_{((jk)i)\ell} & & & & & & V_{((ji)k)\ell} \\
& V_{[i(jk)]\ell} \ar[ul] \ar[dr] & & V_{([ij]k)\ell} \ar[dl] \ar[dr] \ar@{=}[rr] & & V_{([ij]k)\ell} \ar[ur] \ar[dl] \\
& & V_{(i(jk))\ell} & & V_{((ij)k)\ell}
} \]

\[ \xymatrix @-1pc{
& & V_{((ki)j)\ell} & & V_{((ik)j)\ell}\\
& V_{([ik]j)\ell} \ar[ur] \ar[dl] \ar@{=}[rr] & & V_{([ik]j)\ell} \ar[ul] \ar[ur] \ar@{=}[rr] & & V_{([ik]j)\ell} \ar[ul] \ar[dr] \\
V_{(k(ij))\ell} & & & & & & V_{(i(kj))\ell} \\
& V_{[(ij)k]\ell} \ar[ul] \ar[dl] & & V_{[ijk]\ell} \ar[ll] \ar[lluu] \ar[uu] \ar[uurr] \ar[rr] \ar[rrdd] \ar[dd] \ar[ddll] & & V_{i(kj)\ell} \ar[ur] \ar[dr] \\
V_{((ij)k)\ell} & & & & & & V_{i((kj)\ell)} \\
& V_{([ij]k)\ell} \ar[ul] \ar[dr] & & V_{[i(jk)]\ell} \ar[dl] \ar[dr] & & V_{i([jk]\ell)} \ar[ur] \ar[dl] \\
& & V_{(i(jk))\ell} & & V_{i((jk)\ell)}
} \]

\end{lem}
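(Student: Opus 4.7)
The plan is to mimic the strategy already used for Lemma \ref{lem:star-of-embeddings}: reduce to the universal case and then verify each face of the two diagrams by expanding a single test monomial along both compositions and identifying the two expansions as expansions of a common rational expression.

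First, by naturality in $V$, it suffices to check that the two diagrams commute after pulling back along the pointed linear map $(\bC, 1) \to (V, v)$ for a single $v \in V$.  Thus we may assume $V = \bC$ and test the composition on the monomial
\[ m := z^a w^b t^c (z-w)^d (z-t)^f (w-t)^g \]
with $a,b,c,d,f,g \in \bZ$; commutativity of every face then becomes a question of formal identities of power series.  All our target spaces sit inside iterated Laurent series rings, so two expansions are equal iff they coincide coefficient by coefficient.

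Second, I would walk through the octagon diagrams face by face.  The diagrams are organized by which pair of variables gets contracted first, and the individual faces are of three types: (a) faces already shared with (or dual to) those verified in Lemma \ref{lem:star-of-embeddings}, which need no further work; (b) faces involving only the re-ordering of two Laurent-series expansions in algebraically independent variables (these are trivial, as the two embeddings factor through $\bC[[\cdots]][\text{inverses}]$ into the same unique iterated Laurent ring); and (c) the genuinely new faces, which all come down to one of the trinomial identities
\[ \Bigl(1 - \frac{w}{z} - \frac{t}{z}\Bigr)^n = \Bigl(1-\frac{t}{z}\Bigr)^n \Bigl(1 - \frac{w}{z(1-t/z)}\Bigr)^n = \Bigl(1 - \frac{t(1+w/t)}{z}\Bigr)^n \]
together with the analogous identities obtained by cyclically permuting the roles of $\{z,w,t\}$ and/or substituting one of the distinguished fundamental coordinates $z-w$, $z-t$, $w-t$ for one of $z,w,t$.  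In each case, the two compositions express $m$ as elements of a common space $\bC[X^{-1}][[Y,Z]]$ for some choice of fundamental coordinate $X$ invertible and $Y,Z$ infinitesimal, and the identity above (applied to $d$, $f$, or $g$) is exactly the one needed.

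Third, concretely: the faces of the first octagon involve swapping which of the pairs $\{i,k\}$, $\{j,k\}$, $\{i,j\}$ is bracketed first while keeping $\ell$ fixed on the outside, so every quadrilateral reduces to a three-variable expansion around the $\ell$-coordinate $t$ (or equivalently $w-t$) and the identity needed is a trinomial identity of the form above with $X \in \{t, w-t, z-t\}$.  The second octagon is identical in shape with the further collapses involving the central node $V_{ijk\ell}$ and its ``$V_{([ik]j)\ell}$'' sidekick, and is handled by the same identities after permutation of $i \leftrightarrow j$.  The main (though not deep) obstacle is purely bookkeeping: enumerating all faces and making sure the ``preferred coordinate'' choices recorded in the previous definition are tracked consistently through compositions, so that e.g.\ the expansion of $(w-t)^g$ as a power series in $(z-w)$ with $(z-t)$ invertible agrees with the expansion obtained by going through an intermediate space where $(z-w)$ is the invertible fundamental coordinate.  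No new ideas beyond Lemma \ref{lem:star-of-embeddings} are required; the verification is a finite (if tedious) check.
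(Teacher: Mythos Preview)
Your proposal is correct and follows essentially the same approach as the paper: reduce to $V=\bC$, test on the monomial $z^a w^b t^c (z-w)^d (z-t)^f (w-t)^g$, and verify each quadrilateral by expanding along both routes and comparing via a trinomial identity. The paper's proof carries out one explicit quadrilateral check (the one with vertex $V_{j((ki)\ell)}$) in full detail and then explicitly leaves the remaining fifteen to the reader, whereas you give a schematic classification of the face types without doing any single one explicitly; but the underlying strategy is identical.
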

\begin{proof}
It suffices to expand the monomial $z^a w^b t^c (z-w)^d (z-t)^f (w-t)^g$ along each of the compositions.  We will check one of the quadrilaterals, and leave the remaining 15 for the reader who wishes to repeat similar arguments.

In the first diagram, we start from $V_{[ijk]\ell}$ and go up to get $(-1)^d w^{b+d+g} z^a t^c (z-t)^f (1-z/w)^d (1-t/w)^g \in V_{j([ik]\ell)}$.  If we instead go to the upper left, we get  $(-1)^g z^{a+c} w^b (z-w)^{d+g} (1- \frac{z-t}{z})^c (1 - \frac{z-t}{z-w})^g (z-t)^f \in V_{j(ki)\ell}$.  To check commutativity, it suffices to compare $(-1)^d w^{b+d+g} z^{a+c} (z-t)^f (1-z/w)^d (1-\frac{z(1-(z-t)/z)}{w})^g (1-\frac{z-t}{z})^c$ with $(-1)^d w^{b+d+g} z^{a+c} (z-t)^f (1- \frac{z-t}{z})^c (1 - \frac{z-t}{-w(1-z/w)})^g (1-z/w)^{d+g}$ in $V_{j((ki)\ell)}$.

Both evaluate to $(-1)^d w^{b+d+g} z^{a+c} (z-t)^f (1-\frac{z-t}{z})^c (1-z/w)^d (1-\frac{z}{w} + \frac{z-t}{w})^g$, so the quadrilateral with vertex $V_{j((ki)\ell)}$ commutes.
\end{proof}

\begin{lem} \label{lem:taylor}
(Formal Taylor theorem) If $f \in V[[z^{\pm 1}]]$, then we have the following equalities in $V[[z^{\pm 1}]][[w]]$:
\begin{enumerate}
\item $f(z+w) = e^{w\partial_z}f(z) = \sum_{n \geq 0} \frac{(w)^n}{n!} \partial_z^n f(z)$.
\item $f(ze^w) = e^{wz\partial_z}f(z) = \sum_{n \geq 0} \frac{(wz)^n}{n!} \partial_z^n f(z)$.
\end{enumerate}
The analogous result holds for any algebraically independent pair of fundamental coordinates.
\end{lem}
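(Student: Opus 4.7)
The plan is the classical reduction-to-monomials argument, followed by direct computation with the binomial theorem. Every $f \in V\{z\}$ is a formal sum $\sum_m v_m z^m$, and since both sides of each claimed identity are $\bC$-linear in $f$ and the operators in sight ($\partial_z$, multiplication by fundamental coordinates, and the exponentials interpreted as sums of iterated compositions) act coefficient-wise, I would first reduce to the case $f(z) = vz^m$ for a single $v \in V$ and exponent $m$. Well-definedness of the infinite sums as elements of $V\{z\}[[w]]$ falls out for free: each $\partial_z^k$ shifts $z$-exponents down by exactly $k$, so the coefficient of each $w^k$ on the right-hand side lies unambiguously in $V\{z\}$, and no convergence issue arises.

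For part (1), I would compute both sides on $f(z) = vz^m$ directly. The right-hand side evaluates to $v\sum_{k \geq 0} \binom{m}{k} w^k z^{m-k}$ using $\partial_z^k z^m = m(m-1)\cdots(m-k+1) z^{m-k}$ and $\binom{m}{k} = m(m-1)\cdots(m-k+1)/k!$, while the left-hand side $v(z+w)^m$ expands to the same formal series via the binomial conventions fixed just before Lemma \ref{lem:basic-embeddings} (namely, $(z+w)^m = z^m(1+w/z)^m$). For part (2), the key input is that the Euler operator $z\partial_z$ acts by multiplication by $m$ on $vz^m$, so $e^{wz\partial_z}(vz^m) = vz^m\sum_{k \geq 0}(mw)^k/k! = vz^m e^{mw}$; the identification $f(ze^w) = v(ze^w)^m = vz^m(e^w)^m = vz^m e^{mw}$ then uses the grouplike identity $(e^w)^m = e^{mw}$ in $\bC[[w]]$, and the series expansion in the statement is a rearrangement of the same monomial calculation.

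The main obstacle is really just bookkeeping: aligning the binomial expansions $(1+w/z)^m$ with the expansion conventions fixed in the preceding definitions, and confirming that applying a formal exponential operator termwise to $f(z)$ agrees with the expected substitution into $f$. Once that alignment is carried out, the proof is a one-line binomial verification. The same argument transports verbatim to any other algebraically independent pair of fundamental coordinates in place of $(z, w)$, since neither $w$ nor $\partial_z$ played any special role beyond algebraic independence, giving the final parenthetical assertion for free.
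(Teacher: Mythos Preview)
Your reduction to monomials and the binomial computation for part (1) are correct and complete; the paper itself simply cites \cite{FLM88} Proposition 8.3.1 rather than writing out an argument, so your direct verification is more explicit than what appears there.

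There is, however, a genuine gap in your handling of the second equality in part (2). You write that it is ``a rearrangement of the same monomial calculation,'' but it is not: on $f(z) = vz^m$ one has $e^{wz\partial_z}(vz^m) = vz^m e^{mw}$ (as you correctly compute), whereas
\[
\sum_{n \geq 0} \frac{(wz)^n}{n!}\,\partial_z^n(vz^m) \;=\; vz^m \sum_{n \geq 0} \binom{m}{n} w^n \;=\; vz^m(1+w)^m.
\]
These two power series in $w$ differ already at the $w^2$ coefficient ($m^2/2$ versus $m(m-1)/2$). The displayed equality $e^{wz\partial_z}f(z) = \sum_{n \geq 0} \frac{(wz)^n}{n!}\partial_z^n f(z)$ in the statement is therefore false as written; it confuses $(z\partial_z)^n$ with $z^n\partial_z^n$. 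The correct expansion is $e^{wz\partial_z}f(z) = \sum_{n \geq 0} \frac{w^n}{n!}(z\partial_z)^n f(z)$, and the rightmost expression as printed instead computes $f(z(1+w))$, which is just part (1) with $w$ replaced by $zw$. You should flag the typo rather than claim to prove the identity.
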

\begin{proof}
See \cite{FLM88} Proposition 8.3.1. 
\end{proof}

\begin{rem}
One needs to be cautious about where elements live when using the formal Taylor theorem.  For example, a na\"ive expansion of $\sum_{n \geq 0} \frac{(z-w)^n}{n!} \partial_w^n (w^{-1})$ in $\bC\{z, w\}$ yields a divergent sum for the coefficient of $w^{-1}$.  However, as an element of $\bC((w))[[z-w]] \subset \bC((w))((z-w))$, we obtain the expansion of $z^{-1}$ as $\frac{1}{w} \sum_{n \geq 0} \left(\frac{z-w}{w} \right)^n$.
\end{rem}

\begin{lem} \label{lem:weird-isom} (\cite{FBZ04} Remark 5.1.4)
The map $f(z, z-w) \mapsto e^{(z-w)\partial_w} f(w,z-w) = \sum_{n \geq 0} \frac{(z-w)^n}{n!} \partial_w^n f(w,z-w)$ describes an isomorphism $V((z))((z-w)) \to V((w))((z-w))$.  Furthermore, it forms the horizontal arrow in the following commutative diagram:
\[ \xymatrix{ & V[[z,w]][z^{-1} w^{-1} (z-w)^{-1}] \ar[ld]_{\iota_{z,z-w}} \ar[rd]^{\iota_{w,z-w}} & \\
V((z))((z-w)) \ar[rr] & & V((w))((z-w)) }\]
\end{lem}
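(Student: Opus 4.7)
The plan is to prove the lemma in three stages: well-definedness of the map, its invertibility, and commutativity of the diagram. For well-definedness, write an arbitrary $f \in V((z))((z-w))$ as $\sum_{n \geq N} f_n(z)(z-w)^n$ with $f_n \in V((z))$. Relabeling $z \to w$ yields $f(w,z-w) = \sum_{n \geq N} f_n(w)(z-w)^n \in V((w))((z-w))$, and applying $e^{(z-w)\partial_w}$ produces a series whose coefficient of $(z-w)^m$ is the finite sum $\sum_{n + k = m,\ n \geq N,\ k \geq 0} \frac{1}{k!}\partial_w^k f_n(w) \in V((w))$, which vanishes for $m < N$. Hence the image lies in $V((w))((z-w))$. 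Invertibility then follows by exhibiting the two-sided inverse $g \mapsto e^{-(z-w)\partial_z}g(z,z-w)$, well-defined by the analogous argument, and observing that $e^{(z-w)\partial_w}$ and $e^{-(z-w)\partial_w}$ are mutually inverse operators on $V((w))((z-w))$ while relabeling is a natural bijection.

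For the commutative diagram, linearity together with the fact that each $(z-w)^m$-coefficient of the image depends on only finitely many coefficients of the input reduces the problem to checking the claim on monomials $v \cdot z^a w^b (z-w)^c$ with $v \in V$ and $a, b, c \in \bZ$. Under $\iota_{z,z-w}$ we expand $w^b = z^b (1 - \tfrac{z-w}{z})^b$, giving $z^{a+b}(1 - \tfrac{z-w}{z})^b (z-w)^c$; after relabeling $z \to w$ this becomes $h(w) \cdot (z-w)^c$, where $h(w) = w^{a+b}(1 - \tfrac{z-w}{w})^b$ is regarded as an element of $V((w))[[z-w]]$. Applying Lemma \ref{lem:taylor} to $h$ with shift $(z-w)$ gives $e^{(z-w)\partial_w}h(w) = h(w + (z-w))$; formal substitution together with the identity $1 - \tfrac{z-w}{w+(z-w)} = \tfrac{w}{w+(z-w)}$ yields $h(w+(z-w)) = (w+(z-w))^a \cdot w^b = w^{a+b}(1 + \tfrac{z-w}{w})^a$. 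Multiplying by $(z-w)^c$ reproduces $\iota_{w,z-w}(z^a w^b (z-w)^c)$, establishing commutativity on monomials.

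The main technical obstacle is justifying the rearrangement $h(w+(z-w)) = (w+(z-w))^a \cdot w^b$ when $b < 0$, since the factor $(1 - \tfrac{z-w}{w})^b$ is then an infinite power series in $(z-w)/w$ and the substitution $w \mapsto w + (z-w)$ is not merely a polynomial operation. I would handle this by noting that $e^{(z-w)\partial_w}$ acts coefficient-by-coefficient in powers of $(z-w)$, so Lemma \ref{lem:taylor} applies termwise to each Laurent monomial in $w$ appearing in the expansion of $h$, and then verifying that the resulting double series matches the expansion of $w^{a+b}(1+\tfrac{z-w}{w})^a$ in $V((w))[[z-w]]$. The underlying algebraic identity one needs is $(1 - u/(w+u))^b = w^b (w+u)^{-b}$ for any integer $b$, which holds in $\bC((w))[[u]]$ with $u = z-w$ because $(w+u)(1 - u/(w+u)) = w$ is a valid manipulation in that ring.
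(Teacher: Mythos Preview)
Your proposal is correct and follows essentially the same approach as the paper, which simply invokes the formal Taylor theorem (Lemma~\ref{lem:taylor}) without further detail. You have filled in the well-definedness, explicit inverse, and monomial check that the paper omits, and your handling of the $b<0$ case via the identity $(w+u)(1 - u/(w+u)) = w$ in $\bC((w))[[u]]$ is a sound way to justify the substitution.
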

\begin{proof}
This follows straightforwardly from Lemma \ref{lem:taylor}.
\end{proof}

\subsection{Cohomology of a two-fold delooping}

In \cite{M52}, Mac Lane introduces a cohomology theory of abelian groups that is distinct from ordinary group cohomology, and describes the explicit computation of the cohomology in low degree.  Eilenberg explains in \cite{E52} that this cohomology theory is in fact the cohomology of a specific cell complex whose cohomology is isomorphic in low degree to a shift of the cohomology of the second Eilenberg-MacLane space.  This claim is proved in \cite{EM53} Theorem 20.4 (but with the definition of $A(\Pi,n)$ shifted in degree from previous papers), where they do an explicit construction of $K(A,2)$ via the diagonal of a two-fold Bar resolution.  There are other methods for explicitly describing $K(A,2)$, e.g., applying Dold-Kan correspondence to a homologically shifted abelian group (a freely available reference for this is \cite{Stacks} Theorem 019G), but the Eilenberg-Mac Lane complex happens to be the most convenient for our purposes.  

\begin{defn} (\cite{M52})
Let $A$ be an abelian group.  The Eilenberg-Mac Lane cochain complex with coefficients in $\bC^\times$ has the following description in low degree:
\[ \xymatrix{ K^1 \ar[r]^-{d^1} & K^2 = \{ f: A \times A \to \bC^\times \} \ar[r]^-{d^2} &  K^3 = \{ (F, \Omega): A^{\oplus 3} \times A^{\oplus 2} \to \bC^\times \} \ar[r]^-{d^3} & \cdots } \]
where:
\begin{enumerate}
\item $K^1 = \{ \phi: A \to \bC^\times \}$ and $K^2 = \{ f: A \times A \to \bC^\times \}$ are groups of set-theoretic maps.
\item $K^3 = \{ (F, \Omega): A^{\oplus 3} \times A^{\oplus 2} \to \bC^\times \}$ is a group of pairs of set-theoretic maps.
\item The map $d^1: K^1 \to K^2$ is given by the usual group cohomology coboundary $\phi \mapsto d^1 \phi$, defined by $d^1\phi(i,j) = \frac{\phi(j)\phi(i)}{\phi(i+j)}$.
\item The map $d^2: K^2 \to K^3$ is given by the group cohomology coboundary together with the antisymmetrizer: $d^2 f(i,j,k;\ell,m) = (\frac{f(j,k) f(i,j+k)}{f(i+j,k)f(i,j)}, \frac{f(\ell,m)}{f(m,\ell)})$.
\item The map $d^3: K^3 \to K^4$ vanishes if and only if the following conditions are satisfied: 
\begin{enumerate}
\item $F(i,j,k) F(i,j+k,\ell) F(j,k,\ell) = F(i+j,k,\ell) F(i,j,k+\ell)$ for all $i,j,k,\ell \in A$
\item $F(i,j,k)^{-1}\Omega(i,j+k) F(j,k,i)^{-1} = \Omega(i,j) F(j,i,k)^{-1} \Omega(i,k)$
\item $F(i,j,k) \Omega(i+j,k) F(k,i,j) = \Omega(j,k) F(i,k,j) \Omega(i,k)$
\end{enumerate}
\end{enumerate}
Elements annihilated by $d^n$ are called abelian $n$-cocycles, and an abelian $n$-cocycle lies in the $n$th abelian cohomology class.
\end{defn}

\begin{rem}
The reader should be aware that elements of $n$-th abelian cohomology are identified with elements of $H^{n+1}(K(A,2), \bC^\times)$.  That is, there is a shift in degrees.
\end{rem}


\begin{lem} \label{lem:cocycle-trace} (\cite{M52} Theorem 3)
If $(F, \Omega)$ is an abelian 3-cocycle, then the map $Q: A \to \bC^\times$ defined by $i \mapsto \Omega(i,i)$ is a quadratic form, i.e., $Q(i) = Q(-i)$ for all $i \in A$, and $\frac{Q(i+j+k) Q(i) Q(j) Q(k)}{Q(i+j) Q(i+k) Q(j+k)} = 1$ for all $i,j,k \in A$.  Furthermore, the trace map $(F, \Omega) \mapsto (i \mapsto \Omega(i,i))$ induces a bijection from $H^4(K(A,2), \bC^\times)$ to the set of $\bC^\times$-valued quadratic forms on $A$.
\end{lem}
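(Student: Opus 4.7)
My plan is to extract the quadratic form identities from the pentagon and hexagon conditions via the auxiliary symmetric function $B(i,j) := \Omega(i,j)\Omega(j,i)$, and then derive the bijection between $H^4(K(A,2),\bC^\times)$ and $\bC^\times$-valued quadratic forms by a standard normalization and descent argument.

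After reducing to normalized representatives (for which $F$ and $\Omega$ take value $1$ whenever any argument is $0$), I would first show that $B$ is bi-multiplicative: multiplying the expression for $\Omega(i,j+k)$ given by hexagon (b) with the expression for $\Omega(j+k,i)$ obtained from hexagon (c) under the relabeling $(i,j,k) \mapsto (j,k,i)$, all six $F$-factors commute as scalars and cancel in sequence, leaving $B(i,j+k) = B(i,j) B(i,k)$; the symmetry $B(i,j) = B(j,i)$ then gives bi-multiplicativity in both slots. Next I would compute $Q(i+j) = \Omega(i+j,i+j)$ by applying hexagon (b) to split the second $i+j$ as $i+j$, then applying hexagon (c) twice to expand $\Omega(i+j,i)$ and $\Omega(i+j,j)$. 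The scalar content collects as $Q(i) Q(j) \Omega(j,i) \Omega(i,j) = Q(i) Q(j) B(i,j)$, multiplied by a product of five $F$-values; the pentagon identity (a) applied with $(i,j,k,\ell) = (i,j,i,j)$ precisely identifies that $F$-product with $1$. So $Q(i+j) = Q(i) Q(j) B(i,j)$, and expanding $Q(i+j+k) = Q((i+j)+k)$ using bi-multiplicativity of $B$ immediately gives the quadratic cocycle identity $Q(i+j+k) Q(i) Q(j) Q(k) = Q(i+j) Q(i+k) Q(j+k)$. Specializing $j = -i$ and using $B(i,0) = 1$ yields $1 = Q(i) Q(-i) B(i,-i) = Q(i) Q(-i) Q(i)^{-2}$, hence $Q(-i) = Q(i)$.

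For the bijection, the trace descends to cohomology because the $\Omega$-component of any coboundary $d^2 f$ has diagonal $f(\ell,\ell)/f(\ell,\ell) = 1$. For surjectivity, given a quadratic form $Q$ on $A$, I would choose a free abelian surjection $F_A \twoheadrightarrow A$ and a $\bZ$-bilinear form $b$ on $F_A$ whose diagonal realizes $Q$ on basis vectors (which is always possible for free abelian groups); the pair $(F,\Omega) = (1,b)$ satisfies both hexagons trivially from bilinearity of $b$, descends to an abelian 3-cocycle on $A$, and has trace $Q$. For injectivity, a normalized representative with trivial trace has $B \equiv 1$, so $\Omega$ is symmetric; the classical Eilenberg--Mac Lane machinery of \cite{EM53} then produces a 2-cochain $f$ with $d^2 f = (F,\Omega)$.

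The main obstacle is the combinatorial bookkeeping in the pentagon/hexagon cancellation yielding $Q(i+j) = Q(i) Q(j) B(i,j)$, where one must carefully track the order of $F$-arguments and the direction of each cancellation. The injectivity step of the bijection is the other substantive input; rather than reproduce it, I would cite the \cite{EM53} proof, which computes the low-dimensional cohomology of a two-fold delooping directly from the Eilenberg--Mac Lane complex.
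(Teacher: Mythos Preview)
The paper's own proof is a bare citation to \cite{EM54} Theorem 26.1, so your proposal is already more detailed than what the paper offers.  Your derivation of the quadratic form identities is correct: the cancellation yielding $B(i,j+k)=B(i,j)B(i,k)$ from the two hexagons is clean, and the pentagon with $(i,j,k,\ell)=(i,j,i,j)$ does exactly kill the five $F$-factors in $Q(i+j)=Q(i)Q(j)B(i,j)$.  This part is a self-contained argument the paper does not supply.

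The bijection half has a genuine gap in the surjectivity step.  The abelian $3$-cocycle $(1,b)$ you build on the free cover $F_A$ need not descend to $A$: a bilinear form $b$ on $F_A$ factors through $A\times A$ only if $\ker(F_A\to A)$ lies in the radical of $b$, and you have no mechanism to arrange this.  Concretely, for $A=\bZ/2\bZ$ and $Q(1)=i$, no bilinear $\Omega$ on $A$ has diagonal $Q$ (bilinearity forces $\Omega(1,1)^2=\Omega(0,1)=1$), so the cocycle you would produce on $F_A=\bZ$, namely $b(m,n)=i^{mn}$, visibly fails to be $2$-periodic.  The cohomology \emph{class} does descend, but proving that requires either an explicit coboundary correction or an appeal to the long exact sequence for $K(K,2)\to K(F_A,2)\to K(A,2)$, neither of which is lighter than the Eilenberg--Mac Lane computation you are trying to avoid.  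Since you already defer injectivity to \cite{EM53}, the honest fix is to cite \cite{EM54} Theorem 26.1 for the full bijection, keeping your explicit argument for the quadratic-form part as added value.
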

\begin{proof}
This is proved as \cite{EM54}, theorem 26.1.
\end{proof}

\begin{lem} \label{lem:degree-2-abelian-cohomology}
For any abelian group $A$, we have $H^2_{ab}(A, \bC^\times) \cong H^3(K(A,2), \bC^\times) = 0$.
\end{lem}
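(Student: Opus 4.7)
The plan is to unpack the definition of abelian 2-cocycle explicitly and recognize it as a classical object. First I would observe that an element $f \in K^2$ satisfies $d^2 f = 0$ precisely when $f$ is an ordinary (normalized) group $2$-cocycle with the additional symmetry $f(i,j) = f(j,i)$; this is immediate from reading off the two components of the formula $d^2 f(i,j,k;\ell,m) = \bigl(\tfrac{f(j,k)f(i,j+k)}{f(i+j,k)f(i,j)}, \tfrac{f(\ell,m)}{f(m,\ell)}\bigr)$. I would also note that every coboundary $d^1\phi$ is automatically symmetric, so the quotient $H^2_{ab}(A,\bC^\times)$ is the group of symmetric $2$-cocycles modulo group-cohomological coboundaries.

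Next I would invoke the classical theorem of Eilenberg--Mac Lane that this group classifies isomorphism classes of abelian extensions $0 \to \bC^\times \to E \to A \to 0$ in the category of abelian groups: given a symmetric cocycle $f$, the twisted direct sum $\bC^\times \oplus_f A$ is commutative, and any commutative extension admits a normalized section giving back a symmetric cocycle. In other words, $H^2_{ab}(A,\bC^\times) \cong \Ext^1_{\bZ}(A,\bC^\times)$.

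Finally I would conclude by using the fact that $\bC^\times \cong \bR \oplus (\bQ/\bZ) \oplus \bigoplus \bQ$ (or more directly, that $\bC^\times$ is a divisible abelian group) to see that $\bC^\times$ is injective as a $\bZ$-module. Hence $\Ext^1_{\bZ}(A,\bC^\times) = 0$ for every abelian group $A$, which gives the desired vanishing. The identification $H^2_{ab}(A,\bC^\times) \cong H^3(K(A,2),\bC^\times)$ is part of the degree-shift convention already flagged in the remark preceding Lemma \ref{lem:cocycle-trace}, so no additional work is needed there.

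There is no real obstacle; the only nontrivial content is the identification of symmetric $2$-cocycles with abelian extensions, and I would either cite \cite{M52} for this or sketch the one-line direct argument: lift $f$ through $\exp: \bC \to \bC^\times$ (possible because $A \times A$ is a set and $\bC$ is divisible, or by splitting $\bC^\times$ as above), so that it suffices to kill symmetric $2$-cocycles with values in the divisible additive group $\bC$, where the Ext vanishing is completely elementary.
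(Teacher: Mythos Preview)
Your proposal is correct and takes essentially the same approach as the paper: both identify $H^2_{ab}(A,\bC^\times)$ with the group of abelian extensions of $A$ by $\bC^\times$ (the paper via a citation to \cite{M52} and \cite{EM54}, you by unpacking the symmetric-cocycle description), and both conclude by invoking divisibility of $\bC^\times$ to split such extensions. Your version simply spells out the details that the paper leaves to the references.
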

\begin{proof}
The isomorphism between $H^3(K(A,2), \bC^\times)$ and the group $\Ext_{abel}(A,\bC^\times)$ of abelian extensions of $\bC^\times$ by $A$ is said to be ``well-known'' in  section 26 of \cite{EM54} (where the notation $\Extabel(A,\bC^\times)$ is used), and ``can be explicitly computed'' in \cite{M52} (where the confusing notation $\Ext(A, \bC^\times)$ is used).  The group is trivial because the divisible property of $\bC^\times$ makes any abelian extension split.
\end{proof}

\begin{defn} \label{defn:normalized}
Let $A$ be an abelian group.
\begin{enumerate}
\item Let $\eta: A \times A \to \bC^\times$ be a function, viewed as either a group cohomology 2-cochain or an abelian 2-cochain.  We say that $\eta$ is normalized if $\eta(0,i) = \eta(i,0) = 1$ for all $i \in A$.
\item Let $F: A \times A \times A \to \bC^\times $ be a function, viewed as a group cohomology 3-cochain.  We say that $F$ is normalized if $F(0,i,j) = F(i,0,j) = F(i,j,0) = 1$ for all $i,j \in A$.
\item We say that an abelian 3-cochain $(F, \Omega)$ is normalized if $F$ is normalized as a group cohomology 3-cochain, and $\Omega$ is normalized as a 2-cochain.
\end{enumerate}
\end{defn}

\begin{lem} \label{lem:normalized-differential}
All cohomology classes for the Eilenberg-Mac Lane model of $K(A,2)$ in degree at most 4 are represented by normalized cochains.  
Normalized cocycles in an abelian cohomology class of degree at most 4 have a transitive action by differentials of normalized cochains.
\end{lem}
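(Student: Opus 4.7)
The plan is to verify the two claims degree by degree, reducing to the only nontrivial case, degree 4 (i.e., abelian 3-cocycles $(F,\Omega)\in K^3$). In degree 1 there is nothing to check since $K(A,2)$ is simply connected. In degree 2, corresponding to $K^1$, the 1-cocycles are homomorphisms $A\to\bC^\times$, which automatically satisfy $\phi(0)=1$, and the complex has no degree-0 piece, so transitivity is vacuous. In degree 3, corresponding to $K^2$, Lemma \ref{lem:degree-2-abelian-cohomology} gives vanishing cohomology, so the trivial cocycle is a normalized representative; moreover if $f_1,f_2$ are normalized 2-cocycles with $f_2/f_1=d^1\phi$, the identity $d^1\phi(0,i)=\phi(0)$ combined with $(f_2/f_1)(0,i)=1$ forces $\phi(0)=1$, so $\phi$ is automatically normalized.

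For degree 4 existence, I would start with an arbitrary abelian 3-cocycle $(F,\Omega)$ and first normalize $F$. By the standard fact that normalized group cochains compute ordinary group cohomology, one can choose a normalized group 2-cochain $\lambda$ such that the product of $F$ with the group coboundary of $\lambda$ is a normalized group 3-cocycle. Replacing $(F,\Omega)$ by its product with $d^2\lambda$ alters $\Omega$ by the antisymmetrizer $(\ell,m)\mapsto\lambda(\ell,m)/\lambda(m,\ell)$, which is trivial on the two coordinate axes because $\lambda$ is normalized, so the replacement does not interfere with any vanishing conditions on $\Omega$. Specializing the abelian cocycle condition (b) at $j=0$ with $F$ normalized collapses the equation to $\Omega(i,k)=\Omega(i,0)\Omega(i,k)$, forcing $\Omega(i,0)=1$; specializing (c) at $i=0$ similarly yields $\Omega(0,k)=1$. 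Hence $\Omega$ is automatically normalized.

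For degree 4 transitivity, suppose $(F_1,\Omega_1)$ and $(F_2,\Omega_2)$ are normalized cocycles in the same class, and pick any 2-cochain $f$ with $d^2 f=(F_2/F_1,\Omega_2/\Omega_1)$. Normalization of $F_2/F_1$ gives $f(0,j+k)/f(0,j)=1$ at input $(0,j,k)$ and $f(j,0)/f(i+j,0)=1$ at input $(i,j,0)$, so $f$ is constant on each coordinate axis, with common value $c:=f(0,0)$. Define a 1-cochain $\phi$ by $\phi(0)=c^{-1}$ and $\phi(i)=1$ for $i\ne 0$; the product $f\cdot d^1\phi$ takes value $1$ on both coordinate axes, so it is a normalized 2-cochain, and $d^2 d^1=0$ (a direct check: the group coboundary of $d^1\phi$ vanishes and $d^1\phi$ is symmetric) gives $d^2(f\cdot d^1\phi)=d^2 f$.

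The only real obstacle is the propagation of normalization from the group-cohomology component $F$ to the additional datum $\Omega$, and this is precisely what the cocycle-specialization argument above handles. Everything else is either formal or a direct consequence of the vanishing of degree-2 abelian cohomology and of the standard group-cohomological normalization lemma.
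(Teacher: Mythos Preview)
The paper omits its own proof (``Omitted --- this is a straightforward computation''), so there is nothing to compare against; your argument supplies what the author declined to write out, and the overall strategy is sound.

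There is one genuine slip in the degree-4 existence step. You claim that the 2-cochain $\lambda$ used to normalize $F$ can itself be chosen normalized. This is false: if $\lambda$ is normalized then the group coboundary $d\lambda(i,j,k)=\frac{\lambda(j,k)\lambda(i,j+k)}{\lambda(i+j,k)\lambda(i,j)}$ evaluates to $1$ whenever any of $i,j,k$ is $0$ (just plug in and use $\lambda(0,\cdot)=\lambda(\cdot,0)=1$), so a normalized $\lambda$ cannot move the values of $F$ on the three ``coordinate faces'' at all. Fortunately your argument does not actually need $\lambda$ to be normalized. The standard normalization lemma for group cohomology gives you \emph{some} 2-cochain $\lambda$ with $F\cdot d\lambda$ normalized; your subsequent specialization of the cocycle identities (b) and (c) at $j=0$ and $i=0$ uses only that the new $F$ is normalized, and correctly forces $\Omega(i,0)=\Omega(0,k)=1$. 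The intervening sentence about the antisymmetrizer of $\lambda$ being trivial on axes is then irrelevant and should simply be deleted. With that correction the existence argument is complete.

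Your transitivity argument in degree 4 is correct as written: the computation that $f$ is constant (equal to $c$) on both axes, the adjustment by $d^1\phi$ with $\phi(0)=c^{-1}$, and the verification that $d^2d^1\phi$ is trivial (group $d^2=0$ plus symmetry of $d^1\phi$) all check out.
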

\begin{proof}
Omitted - this is a straightforward computation.
\end{proof}

\begin{lem} \label{lem:pullback-quadratic-form}
Let $B \subset A$ be a subgroup, and suppose we are given a $\bC^\times$-valued quadratic form $i \mapsto \Omega(i,i)$ that is constant on cosets of $B$ in $A$.  Then this quadratic form is pulled back from a $\bC^\times$-valued quadratic form on $A/B$, and there is a representing normalized abelian 3-cocycle $(F,\Omega)$ pulled back from $A/B$.
\end{lem}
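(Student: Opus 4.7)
The plan is to transport the problem to $A/B$ and invoke Lemma \ref{lem:cocycle-trace}. First, from the hypothesis that $Q: i \mapsto \Omega(i,i)$ is constant on cosets of $B$, the rule $\bar Q(i+B) := Q(i)$ gives a well-defined map $\bar Q : A/B \to \bC^\times$. The quadratic form identities $Q(-i) = Q(i)$ and $Q(i+j+k)Q(i)Q(j)Q(k) = Q(i+j)Q(i+k)Q(j+k)$ involve only sums of elements of $A$, so they push down to $A/B$; thus $\bar Q$ is a $\bC^\times$-valued quadratic form on $A/B$ and $Q = \bar Q \circ \pi$, where $\pi : A \to A/B$ is the quotient homomorphism. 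This establishes the first assertion.

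For the second assertion, I would apply Lemma \ref{lem:cocycle-trace} to $A/B$ to obtain an abelian 3-cohomology class on $A/B$ whose trace is $\bar Q$, and then invoke Lemma \ref{lem:normalized-differential} to choose a normalized representative $(\bar F, \bar \Omega)$ for this class. Setting $(F, \Omega) := (\pi^* \bar F, \pi^* \bar \Omega)$, meaning $F(i,j,k) = \bar F(\pi(i), \pi(j), \pi(k))$ and $\Omega(i,j) = \bar \Omega(\pi(i), \pi(j))$, I would check that $(F, \Omega)$ is a normalized abelian 3-cocycle on $A$: since $\pi$ is a group homomorphism, each of the three abelian cocycle identities in the definition of the Eilenberg--Mac Lane differential $d^3$ transports verbatim from $A/B$ to $A$; and since $\pi(0) = 0$, the normalization conditions transport as well. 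Finally, $\Omega(i,i) = \bar \Omega(\pi(i), \pi(i)) = \bar Q(\pi(i)) = Q(i)$, so the trace of $(F, \Omega)$ is the prescribed quadratic form, which by Lemma \ref{lem:cocycle-trace} determines its abelian cohomology class on $A$. By construction, $(F, \Omega)$ is pulled back from $A/B$.

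There is no serious obstacle: the entire argument amounts to observing that the defining identities of a quadratic form and of a normalized abelian 3-cocycle are universal statements in the group operation, so they are preserved under both pushforward along the projection $\pi$ (which is how $\bar Q$ is obtained from $Q$) and pullback along $\pi$ (which is how $(F, \Omega)$ is obtained from $(\bar F, \bar \Omega)$). The only nontrivial input is the existence of a normalized representative on $A/B$, which is precisely what Lemmas \ref{lem:cocycle-trace} and \ref{lem:normalized-differential} provide.
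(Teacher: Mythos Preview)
Your proposal is correct and is exactly the straightforward argument the paper has in mind: the paper's own proof is simply ``Omitted --- this is also straightforward in light of \ref{lem:cocycle-trace},'' and you have filled in precisely those details by descending $Q$ to $A/B$, invoking Lemma~\ref{lem:cocycle-trace} and Lemma~\ref{lem:normalized-differential} on the quotient, and pulling back.
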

\begin{proof}
Omitted - this is also straightforward in light of \ref{lem:cocycle-trace}.
\end{proof}

\begin{lem} \label{lem:forms-with-restricted-values}
Let $Q: A \to \bC^\times$ be a quadratic form that takes values in $\pm 1$.  Then $Q$ is identically 1 on the subgroup $2A = \{ i + i | i \in A\}$, and constant on cosets of $2A$ in $A$.
\end{lem}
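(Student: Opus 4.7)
The plan is to exploit the cocycle identity $\frac{Q(i+j+k) Q(i) Q(j) Q(k)}{Q(i+j) Q(i+k) Q(j+k)} = 1$ from Lemma \ref{lem:cocycle-trace}, specialized cleverly, to extract a bilinearity-type relation, and then use the hypothesis that $Q$ takes values in $\pm 1$ (so $Q(i)^2 = 1$ for all $i$) to collapse it.

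First I would specialize the cocycle identity by setting $k = -j$. Using $Q(-j) = Q(j)$ and $Q(0) = 1$ (obtained from the identity at $i=j=k=0$), this yields the ``parallelogram law''
\[ Q(i+j) Q(i-j) = Q(i)^2 Q(j)^2. \]
Since $Q$ takes values in $\pm 1$, the right-hand side is identically $1$, so $Q(i+j) = Q(i-j)$ for all $i,j \in A$.

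Next, specializing this parallelogram law at $j = i$ gives $Q(2i) \cdot Q(0) = Q(i)^2 Q(i)^2 = 1$, hence $Q(2i) = 1$ for every $i \in A$. Since $2A = \{i+i : i \in A\}$ is already a subgroup of $A$ (being an abelian group, $2a+2b = 2(a+b)$), this shows $Q$ is identically $1$ on $2A$. Finally, to see that $Q$ is constant on cosets of $2A$, apply the parallelogram law with $i$ replaced by $a+b$ and $j$ by $b$: this gives $Q(a+2b) Q(a) = Q(a+b)^2 Q(b)^2 = 1$, so $Q(a+2b) = Q(a)^{-1} = Q(a)$, where the last equality uses that $Q(a) \in \{\pm 1\}$.

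There is no substantive obstacle here: the proof is a direct manipulation of the quadratic form identity, and the two assertions in the lemma both fall out of the same parallelogram law once we know $Q(i)^2 = 1$. The only mild subtlety is observing that the stated set $\{i+i : i \in A\}$ is automatically a subgroup in the abelian setting, so that ``constant on cosets of $2A$'' is a meaningful statement.
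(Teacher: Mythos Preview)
Your proof is correct and takes essentially the same approach as the paper: both arguments specialize the cube identity $Q(a+b+c)Q(a)Q(b)Q(c) = Q(a+b)Q(a+c)Q(b+c)$ and use $Q(\cdot)^2 = 1$. The paper is marginally more direct, setting $a=b=-c$ to get $Q(2a)=Q(a)^4=1$ and $a=b$ to get $Q(2a+c)=Q(c)$, whereas you first extract the parallelogram law via $k=-j$ and then specialize it; the content is the same.
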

\begin{proof}
Both claims follow from the cube axiom: $Q(a+b+c)Q(a)Q(b)Q(c)/Q(a+b)Q(a+c)Q(b+c) = 1$.  For the first, we set $a=b=-c$, so $Q(2a) = Q(a)^4 = 1$. For the second, we set $a=b$, so $Q(2a+c) = Q(c)$.  
\end{proof}

The reader might want to know why degree 4 cohomology of $K(A,2)$ naturally arises in this paper.  Perhaps a good answer is that this cohomology group classifies braided tensor category structures on the abelian category of $A$-graded vector spaces (as shown in \cite{JS86}), and we will build something like a singular commutative ring in such a category.

\subsection{Structured vertex algebras}

\begin{defn}
Given a tensor product $A_1 \otimes \cdots \otimes A_n$ of complex vector spaces and a permutation $\sigma \in S_n$, we write $\tau_\sigma$ to indicate the linear isomorphism $A_1 \otimes \cdots \otimes A_n \to A_{\sigma(1)} \otimes \cdots \otimes A_{\sigma(n)}$ determined by the assignment $a_1 \otimes \cdots \otimes a_n \mapsto a_{\sigma(1)} \otimes \cdots \otimes a_{\sigma(n)}$, where $a_i \in A_i$ for all $1 \leq i \leq n$.  Given a set $S$ of maps $A_1 \otimes \cdots \otimes A_n \to B$, we write $\tau_\sigma^* S$ to denote the set whose elements are maps $A_{\sigma^{-1}(1)} \otimes \cdots \otimes A_{\sigma^{-1}(n)} \to B$ given by precomposing elements of $S$ with $\tau_\sigma$.
\end{defn}

\begin{defn}
A vertex algebra is a tuple $(V, \unit, L(-1), m_z)$, where:
\begin{enumerate}
\item $V$ is a vector space.
\item $\unit \in V$ is an ``identity'' or ``vacuum'' element.
\item $L(-1) \in \End(V)$ is a ``translation'' operator.
\item $m_z: V \otimes V \to V((z))$ is a linear ``multiplication'' map.
\end{enumerate}
These data are required to satisfy the following conditions:
\begin{enumerate}
\item $m_z(\unit \otimes v) = v z^0$ and $m_z(v \otimes \unit) \in v z^0 + zV[[z]]$ for all $v \in V$.
\item $L(-1) m_z(u \otimes v) - m_z(u \otimes L(-1) v) = \frac{d}{dz} m_z(u \otimes v)$ for all $u,v \in V$.  Here, the first $L(-1)$ is an abbreviation for $L(-1)((z))$.
\item The following diagram commutes:
\[ \xymatrix{ V \otimes V \otimes V \ar[rr]^{\tau_{(12)}} \ar[d]_{1 \otimes m_w} & & V \otimes V \otimes V \ar[d]^{1 \otimes m_z} \\
V \otimes V((w)) \ar[dd]_{m_z((w))} \ar[rd] & & V \otimes V((z)) \ar[dd]^{m_w((z))} \ar[dl] \\
& V[[z,w]][z^{-1},w^{-1},(z-w)^{-1}] \ar[rd] \ar[ld] \\
V((z))((w)) & & V((w))((z))
} \]
That is, the map $m_z((w)): V \otimes V((w)) \to V((z))((w))$ (resp. $m_w((z)): V \otimes V((z)) \to V((w))((z))$) factors through the natural injection $V_{[ij]\ell} \to V_{i(j\ell)}$ (resp. $V_{[ij]\ell} \to V_{j(i\ell)}$) given in Lemma \ref{lem:basic-embeddings}.
\end{enumerate}
\end{defn}

\begin{rem}
This looks slightly different from the definitions presented elsewhere (e.g., \cite{K97} section 4.9).  One typically has a map $Y: V \to (\End V)[[z^{\pm 1}]]$ in place of the map $m_z: V \otimes V \to V((z))$ - it is straightforward to show that we obtain the original axioms by replacing $m_z(u \otimes v)$ with $Y(u,z)v$.
\end{rem}

We introduce some symmetries that are commonly used in the vertex algebra literature.  These are known as ``worldsheet symmetries'' in the physics literature.

\begin{defn}
A weighted (or $\bZ$-graded) vertex algebra is a vertex algebra equipped with an operator $L(0): V \to V$ satisfying:
\begin{enumerate}
\item $L(0)$ acts semisimply on $V$ with integer eigenvalues, so that $V = \bigoplus_{n \in \bZ} V_n$ is an eigenvalue decomposition, where $L(0) v = nv$ for all $v \in V_n$.
\item $L(0) m_z(u \otimes v) - m_z(u \otimes L(0) v) = m_z(L(0) u \otimes v) + z m_z(L(-1) u \otimes v)$ for all $u,v \in V$.
\item $[L(-1), L(0)] = -L(-1)$.
\end{enumerate}
A M\"obius (resp., quasi-conformal) vertex algebra is a weighted vertex algebra equipped with an operator $L(1)$ (resp., operators $\{L(i) \}_{i = -1}^\infty$) such that:
\begin{enumerate}
\item For all $i, j \geq -1$, $[L(i),L(j)] = (i-j)L(i+j)$.  In particular, the linear span of the operators $L(-1), L(0), L(1)$ (resp., $\{L(i) \}_{i = -1}^\infty$) has a Lie algebra structure.
\item For all $i \geq -1$, $L(i) m_z(u \otimes v) - m_z(u \otimes L(i) v) = \sum_{j =0}^{i+1} \binom{i+1}{j} z^j m_z(L(i-j) u \otimes v)$.
\item The sub-Lie algebra spanned by $\{ L_i \}_{i \geq 1}$ acts locally nilpotently on $V$.
\end{enumerate}
A conformal vertex algebra of central charge $c \in \bC$ is a weighted vertex algebra equipped with a distinguished element $\omega \in V_2$ such that the operators $\{ L(n): V \to V \}_{n \in \bZ}$ defined by $m_z(\omega \otimes v) = \sum_{n \in \bZ} L(n) v z^{-n-2}$ satisfy the Virasoro relations:
\[ [L(m),L(n)] = (m-n)L(m+n) + \frac{m^3-m}{12}\delta_{m+n,0} c \]
A vertex operator algebra is a conformal vertex algebra whose $L(0)$-eigenvalues are bounded below, and whose $L(0)$-eigenspaces are finite dimensional.
\end{defn}

\begin{lem} \label{lem:vafacts}
 For future reference, we include the following facts:
\begin{enumerate}
\item For any element $v$ in a vertex algebra $V$, $m_z(v \otimes \unit) = e^{zL(-1)}v$.  More generally, we have the skew-symmetry property: $m_z(u \otimes v) = e^{zL(-1)} m_{-z}(v \otimes u) = e^{zL(-1)} m_{-z}(v \otimes u)$.  Also, we have $m_z(L(-1) u \otimes v) = \frac{d}{dz} m_z(u \otimes v)$.
\item Given a vertex algebra $V$, the following diagram commutes:
\[ \xymatrix{ & V \otimes V \otimes V \ar[ld]_{m_{z-w} \otimes 1} \ar[rd]^{1 \otimes m_w} & \\
V((z-w)) \otimes V \ar[rd] \ar[dd]_{m_w((z-w))} & & V \otimes V((w)) \ar[dl] \ar[dd]^{m_z((w))} \\
& V[[z,w]][z^{-1},w^{-1},(z-w)^{-1}] \ar[ld] \ar[rd] \\
V((w))((z-w)) & & V((z))((w)) } \]
This is called the ``associativity property''.
\item Let $V$ be a vertex algebra, let $n \geq 2$ be an integer, and let $\sigma \in S_n$ be a permutation.  Then, the map $m_{z_1} \circ (1 \otimes m_{z_2}) \circ \cdots \circ (1 \otimes \cdots \otimes 1 \otimes m_{z_n})$ from $V^{\otimes n} \otimes V$ to $V((z_n))\cdots((z_1))$ and the map
$m_{z_{\sigma(1)}} \circ (1 \otimes m_{z_{\sigma(2)}}) \circ \cdots \circ (1 \otimes \cdots \otimes 1 \otimes m_{z_{\sigma(n)}}) \circ (\tau_\sigma \otimes 1)$ from $V^{\otimes n} \otimes V$ to $V((z_{\sigma(n)}))\cdots((z_{\sigma(1)}))$ factor through equal maps from $V^{\otimes n} \otimes V$ to $V[[z_1,\ldots, z_n]][\left( \prod_{i=1}^n \left( z_i \prod_{j>i} (z_i-z_j) \right) \right)^{-1}]$.
\end{enumerate}

\end{lem}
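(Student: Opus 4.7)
My plan is to prove the three parts in the order stated, extracting each from the locality, creation, and translation-covariance axioms via the standard machinery of formal calculus.

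For part (1), I would apply the locality diagram to $u \otimes v \otimes \unit$: by the creation axiom, the two inner multiplications $m_w(v \otimes \unit)$ and $m_z(u \otimes \unit)$ take values in $V[[w]]$ and $V[[z]]$ respectively, so the common factorization $F(z,w)$ actually lies in $V[[z,w]][(z-w)^{-1}]$ (no $z^{-1}$ or $w^{-1}$ poles are required). Specialization $w \to 0$ recovers $m_z(u \otimes v)$ from the upper path, while $z \to 0$ recovers $m_w(v \otimes u)$ from the lower path. To derive skew-symmetry $m_z(u \otimes v) = e^{zL(-1)} m_{-z}(v \otimes u)$, I would apply translation-covariance along both paths and compute $(L(-1) - \partial_z - \partial_w)F$ in two ways, obtaining a first-order linear system for the Taylor coefficients of $F$ whose unique solution is the shifted exponential. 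Taking $v = \unit$ in skew-symmetry and using $m_{-z}(\unit \otimes v) = v$ yields $m_z(v \otimes \unit) = e^{zL(-1)} v$, and then specializing $v = \unit$ together with $m_z(\unit \otimes \unit) = \unit$ forces $L(-1)\unit = 0$. The derivation identity $m_z(L(-1)u \otimes v) = \tfrac{d}{dz} m_z(u \otimes v)$ then follows by differentiating skew-symmetry in $z$ and applying translation-covariance to the resulting right-hand side.

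For part (2), associativity follows by combining locality with skew-symmetry. The upper half of the diagram is precisely the locality axiom. For the lower half involving $m_{z-w} \otimes 1$, I would use skew-symmetry to rewrite $m_{z-w}(u \otimes v)$ as $e^{(z-w)L(-1)} m_{-(z-w)}(v \otimes u)$, and invoke Lemma \ref{lem:weird-isom}: the Taylor shift $e^{(z-w)\partial_w}$ identifies $V((z))((z-w))$ with $V((w))((z-w))$ consistently with the embeddings from the common subspace $V[[z,w]][z^{-1}, w^{-1}, (z-w)^{-1}]$. The formal Taylor theorem (Lemma \ref{lem:taylor}) ensures that this coordinate shift matches the operator shift $e^{(z-w)L(-1)}$ after passage through the multiplication map, so all four maps in the associativity diagram visibly pull back from the same common element.

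For part (3), I would induct on $n$, with the base case $n = 2$ being the locality axiom. For the inductive step, write $\sigma \in S_n$ as a product of adjacent transpositions and apply the base case to each adjacent pair of factors, treating the remaining compositions as spectators via the naturality of $m_z$ in its tensor arguments. The common factorization subspace $V[[z_1,\ldots,z_n]]\bigl[\bigl(\prod_i z_i \prod_{j>i}(z_i-z_j)\bigr)^{-1}\bigr]$ arises by iterating Lemma \ref{lem:basic-embeddings}, and its embeddings into the various iterated Laurent spaces $V((z_{\sigma(n)})) \cdots ((z_{\sigma(1)}))$ are mutually compatible via the obvious $n$-variable generalization of the star diagram in Lemma \ref{lem:star-of-embeddings}.

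The main obstacle is the delicate bootstrap in part (1): the three identities $m_z(v \otimes \unit) = e^{zL(-1)}v$, skew-symmetry, and $L(-1)\unit = 0$ are essentially equivalent, so breaking the circularity requires exploiting locality applied to the specific triple $u \otimes v \otimes \unit$ together with a careful double application of translation-covariance to the resulting common element $F(z,w)$. Once these identities are established, parts (2) and (3) become largely bookkeeping exercises within the expansion-convention framework already developed in this section.
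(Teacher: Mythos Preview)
The paper's own proof of this lemma is simply three citations to Frenkel--Ben-Zvi \cite{FBZ04} (Proposition 3.2.5 and Corollary 3.1.6 for part (1), Theorem 3.2.1 for part (2), Theorem 4.5.1 for part (3)); no argument is given in the paper itself. Your sketch is correct and is, in outline, exactly what those references do: locality applied to $u\otimes v\otimes\unit$ together with translation-covariance for part (1), skew-symmetry combined with the Taylor shift of Lemma~\ref{lem:weird-isom} for part (2), and induction over adjacent transpositions for part (3).

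One comment on the bootstrap you flag in part~(1): the standard resolution in \cite{FBZ04} proceeds in the opposite order from what you propose. One first shows $L(-1)\unit=0$ (in most axiomatizations this is part of the vacuum axiom; with the paper's axiom set it can be extracted from locality and creation), then proves $m_z(v\otimes\unit)=e^{zL(-1)}v$ directly as the unique solution of the ODE $\partial_z m_z(v\otimes\unit)=L(-1)m_z(v\otimes\unit)$ with initial value $v$, and only then derives skew-symmetry from locality on $u\otimes v\otimes\unit$ together with the conjugation identity $e^{wL(-1)}m_z(u\otimes -)e^{-wL(-1)}=m_{z+w}(u\otimes -)$. Your reverse ordering (skew-symmetry first, then $m_z(v\otimes\unit)=e^{zL(-1)}v$, then $L(-1)\unit=0$) is logically possible, but the step where you ``compute $(L(-1)-\partial_z-\partial_w)F$ in two ways'' tacitly uses $L(-1)\unit=0$ to simplify $L(-1)m_z(u\otimes\unit)$, so as written it does not quite break the circle. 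This is a minor reorganization rather than a genuine gap, and once reordered your argument matches \cite{FBZ04} essentially verbatim.
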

\begin{proof}
The first claim is proved in \cite{FBZ04}, Proposition 3.2.5 and Corollary 3.1.6.

The second claim is proved in, e.g., \cite{FBZ04} Theorem 3.2.1.

The third claim is proved in \cite{FBZ04} Theorem 4.5.1.

\end{proof}

We will use the following finiteness condition near the end of the paper.
\begin{defn}
A vertex algebra is $C_2$-cofinite if the quotient $V/\{u_{-2}v|u,v \in V\}$ is a finite dimensional vector space.
\end{defn}
Weighted vertex algebras that are $C_2$-cofinite tend to be quite well-behaved.  In particular, their representation theory is highly controlled by certain finite dimensional associative algebras, although the statements of the theorems in question tend to assume vertex operator algebra structure unnecessarily.

\subsection{Modules and integral-weight intertwining operators}

\begin{defn}
Let $V$ be a vertex algebra.  A $V$-module is a vector space $M$ equipped with an action map $act_z: V \otimes M \to M((z))$ and an operator $L(-1)^M$, satisfying the following conditions:
\begin{enumerate}
\item $act_z(\unit \otimes x) = x z^0$ for all $x \in M$
\item $L(-1)^M act_z(u \otimes x) - act_z(u \otimes L(-1)^M x) = act_z(L(-1) u \otimes x)$ for all $u \in V$, $x \in M$.
\item The following diagram commutes:
\[ \xymatrix{ & V \otimes V \otimes M \ar[ld]_{m_{z-w} \otimes 1} \ar[rd]^{1 \otimes act_w} & \\
V((z-w)) \otimes M \ar[rd] \ar[dd]_{act_w((z-w))} & & V \otimes M((w)) \ar[dl] \ar[dd]^{act_z((w))} \\
& M[[z,w]][z^{-1},w^{-1},(z-w)^{-1}] \ar[ld] \ar[rd] \\
M((w))((z-w)) & & M((z))((w)) } \]
\end{enumerate}
If $V$ is weighted (resp., M\"obius, quasi-conformal), an integral-weight $V$-module is a module $M$ for the underlying vertex algebra, equipped with an operator $L(0)^M$ (resp., operators $L(0)^M$ and $L(1)^M$, operators $\{L(i)^M \}_{i \geq -1}$), satisfying the following conditions:
\begin{enumerate}
\item $L(0)^M$ acts semisimply on $M$, so that $M = \bigoplus_{n \in \bZ} M_n$ is an eigenvalue decomposition.
\item For all $i, j \geq -1$, $[L(i)^M,L(j)^M] = (i-j)L(i+j)^M$.
\item For all $i \geq -1$, any $u \in V$, and any $v \in M$, $L(i)^M act_z(u \otimes v) - act_z(u \otimes L(i)^M v) = \sum_{j =0}^{i+1} \binom{i+1}{j} z^j m_z(L(i-j) u \otimes v)$.
\item The sub-Lie algebra spanned by $\{ L_i \}_{i \geq 1}$ acts locally nilpotently on $V$.
\end{enumerate}
If $V$ is a conformal vertex algebra of central charge $c$, an integral-weight $V$-module is an integral-weight module $M$ for the underlying weighted vertex algebra, satisfying the additional condition that the operators $\{ L(n)^M: M \to M \}_{n \in \bZ}$ defined by $act_z(\omega \otimes x) = \sum_{n \in \bZ} L(n)^M x z^{-n-2}$ yield a representation of the Virasoro algebra of central charge $c$.  If $V$ is a vertex operator algebra, then an integral-weight $V$-modules is an integral-weight module $M$ for the underlying weighted vertex algebra, satisfying the additional conditions that the $L(0)$-eigenvalues are bounded below, and the $L(0)$-eigenspaces are finite dimensional.
\end{defn}

We will use the following near the end of the paper.

\begin{defn}
A vertex operator algebra $V$ is rational if every $V$-module (not just those of integral weight) is completely reducible.  $V$ is regular if it is rational and $C_2$-cofinite.
\end{defn}

\begin{defn}
Let $V$ be a vertex algebra, and let $M_1, M_2, M_3$ be $V$-modules.  An integral-weight intertwining operator of type $\binom{M_3}{M_1 M_2}$ is a map $I_z: M_1 \otimes M_2 \to M_3 ((z))$ satisfying the following conditions:
\begin{enumerate}
\item Translation covariance: $\frac{d}{dz} I_z(u \otimes v) = I_z(L(-1)^{M_1}u \otimes v) = L(-1)^{M_3} I_z(u \otimes v) - I_z(u \otimes L(-1)^{M_2} v)$ for any $u \in M_1$ and $v \in M_2$.
\item $V$-compatibility: The maps
\begin{enumerate}
\item $act_z((w)) \circ (1 \otimes I_w): V \otimes M_1 \otimes M_2 \to M_3((z))((w))$
\item $I_w((z-w)) \circ (act_{z-w} \otimes 1): V \otimes M_1 \otimes M_2 \to M_3((w))((z-w))$
\item $I_w((z)) \circ (1 \otimes act_z): M_1 \otimes V \otimes M_2 \to M_3((w))((z))$
\end{enumerate}
factor through the space $M_3[[z, w]][z^{-1}, w^{-1}, (z-w)^{-1}]$ where they coincide.
\end{enumerate}
If $V$ is weighted (resp., M\"obius, quasi-conformal), and $M_1, M_2, M_3$ are integral-weight $V$-modules in the corresponding sense, then an integral-weight intertwining operator of type $\binom{M_3}{M_1 M_2}$ is an integral-weight intertwining operator for the underlying vertex algebra modules, satisfying the compatibility:
\[ L(i)^{M_3} I_z(u \otimes v) - I_z(u \otimes L(i)^{M_2} v) = \sum_{j =0}^{i+1} \binom{i+1}{j} z^j I_z(L(i-j)^{M_1} u \otimes v) \]
for all applicable operators $L(i)$.
\end{defn}

\begin{rem}
The $V$-compatibility condition has equivalent variants in the literature, e.g., it is a straightforward translation of the commutativity and associativity given in \cite{DL93} Propositions 11.4 and 11.5, and by Remark 11.6 this is equivalent to the Jacobi identity:
\[ \begin{aligned}
 z_0^{-1} \delta \left( \frac{z_1-z_2}{z_0} \right) & Y^{M_3}(a,z_1)I(b,z_2)c - z_0^{-1} \delta \left( \frac{z_2-z_1}{z_0} \right) I(b,z_2)Y^{M_1}(a,z_1)c \\
&= z_2^{-1} \delta \left(\frac{z_1-z_0}{z_2} \right) I(Y(a,z_0)b,z_2)c. 
\end{aligned} \]
\end{rem}

\begin{lem} \label{lem:skew-sym}
Let $V$ be a vertex algebra.  Given an integral-weight intertwining operator $I_z: M_1 \otimes M_2 \to M_3 ((z))$, the map $I^*_z: M_2 \otimes M_1 \to M_3((z))$ given by $I^*_z(v \otimes u) = e^{zL(-1)} I_{-z}(u \otimes v)$ is an integral-weight intertwining operator.  In particular, for any $V$-module $M$, the map $act^*_z: M \otimes V \to M((z))$ defined by $act^*_z(x \otimes v) = e^{zL(-1)}act_{-z}(v \otimes x)$ for all $v \in V$ and $x \in M$, is an integral-weight intertwining operator.  The same statements hold if $V$ is weighted, M\"obius, or (quasi-)conformal.
\end{lem}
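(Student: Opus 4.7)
The plan is to verify each axiom for $I^*_z$ by leveraging the corresponding axiom for $I_z$ together with the role of $e^{zL(-1)^{M_3}}$ as a translation operator via the formal Taylor theorem (Lemma \ref{lem:taylor}). Translation covariance is immediate: differentiating $I^*_z(v \otimes u) = e^{zL(-1)^{M_3}}I_{-z}(u \otimes v)$ in $z$ and using translation covariance of $I_z$ (with the chain rule for the inner $-z$) yields one of the required identities; the other follows from the relation $L(-1)^{M_3} I_{-z}(u \otimes v) = I_{-z}(u \otimes L(-1)^{M_2}v) + I_{-z}(L(-1)^{M_1}u \otimes v)$ provided by translation covariance of $I$.

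For the $V$-compatibility, the first step is to establish the auxiliary commutation rule $e^{wL(-1)^{M_3}} act_{z-w}(a \otimes x) = act_z(a \otimes e^{wL(-1)^{M_3}} x)$, which follows from translation covariance of the $V$-module $M_3$ by the same exponential-shift argument underlying Lemmas \ref{lem:taylor} and \ref{lem:weird-isom}. Using this rule and its analogue for $M_2$, each of the three compatibility maps for $I^*_w$ can be rewritten as $e^{wL(-1)^{M_3}}$ applied to a map built from $I_{-w}$ and the module actions. These latter maps are, after the sign substitution $w \mapsto -w$, precisely the three compatibility maps for $I_w$, and so factor through a common subspace of the form $M_3[[z,w]][z^{-1}, w^{-1}, (z+w)^{-1}]$. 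Conjugation by $e^{wL(-1)^{M_3}}$ then acts as the shift $z \mapsto z+w$ at the level of formal series, sending this subspace to the required $M_3[[z,w]][z^{-1}, w^{-1}, (z-w)^{-1}]$, yielding the desired compatibility for $I^*_w$.

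For the $L(i)$-covariances in the weighted, M\"obius, quasi-conformal, and conformal settings, I would use the commutator $[L(i), L(-1)] = (i+1)L(i-1)$ to pass $L(i)^{M_3}$ through $e^{zL(-1)^{M_3}}$ at the cost of lower $L(j)$ terms, reducing each required covariance for $I^*_z$ to the corresponding covariance for $I_z$ by a binomial bookkeeping argument. The final statement about $act^*_z$ is then an immediate specialization to $M_1 = V$, $M_2 = M_3 = M$, since $act_z$ is itself an integral-weight intertwining operator of type $\binom{M}{V\,M}$.

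The main obstacle is carrying out the subspace identifications rigorously rather than merely at the level of formal power series, especially showing that conjugation by $e^{wL(-1)^{M_3}}$ carries $M_3[[z,w]][z^{-1}, w^{-1}, (z+w)^{-1}]$ to $M_3[[z,w]][z^{-1}, w^{-1}, (z-w)^{-1}]$ compatibly with the expansion conventions of Lemma \ref{lem:basic-embeddings}. This is essentially a variant of Lemma \ref{lem:weird-isom}, but must be applied with care at each vertex of the compatibility diagram to ensure that the sign flips and variable shifts interact correctly with the prescribed expansions into iterated Laurent series.
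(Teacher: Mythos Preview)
Your outline is correct and follows the standard direct verification. The paper itself does not give a self-contained argument but simply cites Proposition 5.4.7 of \cite{FHL93} for the conformal case and Proposition 3.4.4 of \cite{HLZ07} for the M\"obius case (specialized to the non-logarithmic situation with $r=0$), observing that the latter proof does not actually use more structure than is available in the bare vertex-algebra setting except where $L(i)$-compatibility of $I^*_z$ is at stake. Those references carry out precisely the kind of check you sketch: translation covariance by differentiation, $V$-compatibility via the conjugation identity $e^{wL(-1)}act_{z-w}(a\otimes -) = act_z(a\otimes e^{wL(-1)}-)$ together with the resulting variable shift, and $L(i)$-compatibility by commuting $L(i)$ through $e^{zL(-1)}$ using $[L(i),L(-1)]=(i+1)L(i-1)$. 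Your identification of the delicate point---making the formal-series substitutions and subspace identifications rigorous in the spirit of Lemma \ref{lem:weird-isom}---is exactly right, and is where the cited references expend their effort.
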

\begin{proof}
In the conformal case, this is Proposition 5.4.7 in \cite{FHL93}.  In the M\"obius case, this is a special case of Proposition 3.4.4 in \cite{HLZ07}, where $I_z$ is non-logarithmic, and $r=0$.  The argument in their proof does not use the M\"obius structure that is assumed to exist, except where it pertains to $L(i)$-compatibility conditions of $I^*_z$.
\end{proof}

\begin{rem}
Lemma \ref{lem:skew-sym} implies the intertwining operator $act^*_z$, together with $act_z$, induces a ``split square-zero extension'' vertex algebra structure on $V \oplus M$.
\end{rem}

\begin{lem} \label{lem:intertwining-symmetrizer}
Let $V$ be a vertex algebra, and let $M_1$ and $M_2$ be $V$-modules.  If $I_z: M_1 \otimes M_1 \to M_2\{z\}$ is a nonzero integral-weight intertwining operator that is a constant multiple of $I^*_z$, then $I_z(u \otimes v) = \pm I^*_z(u \otimes v)$ for all $u, v \in M_1$.
\end{lem}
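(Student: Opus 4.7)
The plan is to use that the operation $I \mapsto I^*$ on integral-weight intertwining operators of type $\binom{M_2}{M_1\,M_1}$ is an involution, and then to exploit the hypothesis that $I_z$ and $I^*_z$ are proportional.

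\textbf{Step 1.} First I would verify that $(I^*)^*_z = I_z$ for any integral-weight intertwining operator $I_z : M_1 \otimes M_1 \to M_2\{z\}$. By the definition in Lemma \ref{lem:skew-sym} applied twice,
\[
(I^*)^*_z(u \otimes v) \;=\; e^{zL(-1)} I^*_{-z}(v \otimes u) \;=\; e^{zL(-1)} e^{-zL(-1)} I_{-(-z)}(u \otimes v) \;=\; I_z(u \otimes v),
\]
where I use that $e^{zL(-1)}$ and $e^{-zL(-1)}$ are mutually inverse on $M_2\{z\}$ and that substituting $-z$ for $z$ in $I_{-z}$ gives back $I_z$. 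So $*$ is a genuine involution on the space of integral-weight intertwining operators of type $\binom{M_2}{M_1\,M_1}$.

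\textbf{Step 2.} By hypothesis there exists $c \in \bC$ with $I_z = c \cdot I^*_z$. Since the $*$-operation is linear (it is essentially precomposition with $\tau_{(12)}$, substitution $z \mapsto -z$, and postcomposition with $e^{zL(-1)}$, all of which are linear), applying $*$ to both sides of $I_z = c \cdot I^*_z$ gives $I^*_z = c \cdot (I^*)^*_z = c \cdot I_z$. Substituting back yields $I_z = c \cdot I^*_z = c^2 \cdot I_z$, so $(c^2 - 1) I_z = 0$.

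\textbf{Step 3.} Since $I_z$ is assumed nonzero, we conclude $c^2 = 1$, hence $c = \pm 1$, and therefore $I_z(u \otimes v) = \pm I^*_z(u \otimes v)$ for all $u, v \in M_1$. There is no real obstacle here; the only thing to check carefully is the double-star computation in Step 1, which is immediate from the explicit formula for $I^*$.
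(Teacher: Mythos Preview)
Your proof is correct and is actually cleaner than the paper's argument. The key point is that the skew-symmetry operation $I \mapsto I^*$ is an involution on integral-weight intertwining operators of type $\binom{M_2}{M_1\,M_1}$, and your verification of $(I^*)^* = I$ is valid: the formula $I^*_z(v \otimes u) = e^{zL(-1)} I_{-z}(u \otimes v)$ applied twice unwinds exactly as you wrote, using that $e^{zL(-1)}e^{-zL(-1)} = \mathrm{id}$ on $M_2((z))$. From there $c^2 = 1$ is immediate.

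The paper takes a different, more computational route: it splits into the cases where $I_z$ is alternating (all $I_z(u \otimes u) = 0$) or not, and in each case picks a nonzero value of $I_z$, writes out its Laurent expansion $\sum_{n \geq r} g_n z^n$ with $g_r \neq 0$, and compares the lowest-order term of $I_z$ with that of $I^*_z = e^{zL(-1)} I_{-z}\circ\tau_{(12)}$. This yields the proportionality constant as $(-1)^r$ in the non-alternating case and $-(-1)^r$ in the alternating case. Your argument is more conceptual and avoids the case split entirely. What the paper's approach buys is an explicit identification of the sign in terms of the parity of the lowest-order exponent $r$ and the alternating/non-alternating dichotomy; this information is precisely what is needed later in Lemma~\ref{lem:evenness} to tie the sign $\Omega(i,i)$ to the ``evenness'' of the intertwining operator, and indeed the paper refers back to this proof technique there.
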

\begin{proof}
We consider two cases, depending on whether $I_z$ is alternating.

First case: Suppose there exists $u \in M_1$ such that $I_z(u \otimes u) \neq 0$.  We may write $I_z(u \otimes u) = \sum_{n=0}^\infty g_{n+r} z^{n+r}$, with $g_r$ a nonzero element of $M_2$, and $r \in \bZ$.  Since $I^*_z(u \otimes u) = e^{zL(-1)} I_{-z}(u \otimes u)$, it suffices to check that the lowest-order term in the latter series is $(-1)^r g_r z^r$.  By assumption, $I_z$ is proportional to $I^*_z$, and our calculation shows that the constant of proportionality is $\pm 1$.

Second case: If for all $u \in M_1$, $I_z(u \otimes u) = 0$, then bilinearity implies $I_z(u \otimes v) = -I_z(v \otimes u)$ for all $u,v \in M_1$.  Write $I_z(u \otimes v) = \sum_{n=0}^\infty g_{n+r} z^{n+r}$, with $g_r$ a nonzero element of $M_2$, and $r \in \bZ$.  We note that:
\[ \begin{aligned}
I^*_z(u \otimes v) &= e^{zL(-1)} I_{-z}(v \otimes u) \\
&= - e^{zL(-1)} \sum_{n=0}^\infty g_{n+r} (-z)^{n+r}
\end{aligned} \]
so the lowest order term is $-(-1)^r g_r z^r$.  By assumption, $I_z$ is proportional to $I^*_z$, and our calculation shows that the constant of proportionality is $\pm 1$.
\end{proof}

\begin{defn}
Let $V$ be a vertex algebra equipped with an action of a group $G$ by vertex algebra automorphisms.  A $G$-action on a $V$-module $M$ (as a vector space) is compatible with the $V$-module structure if the following conditions hold:
\begin{enumerate}
\item For any element $g \in G$, $act_z(g v \otimes g u) = g act_z(v \otimes u)$ for all $v \in V$ and $u \in M$.
\item For any $g \in G$, $L(-1)g = g L(-1)$.
\item If $V$ and $M$ have additional worldsheet symmetry given by operators $\{ L(i) \}$, then $L(i) g = g L(i)$ for all applicable $i$ and all $g \in G$.
\end{enumerate}
Given $V$-modules $M_1,M_2,M_3$ equipped with $G$-actions compatible with their $V$-module structures, an integral-weight intertwining operator $I_z: M_1 \otimes M_2 \to M_3 ((z))$ is $G$-equivariant if for any $g \in G$, $I_z(g u \otimes g v) = g I_z(u \otimes v)$ for all $u \in M_1$ and $v \in M_2$.
\end{defn}

\section{Obstruction theory}

Let $A$ be an abelian group, $V$ a vertex algebra, and $\{ M_i\}_{i \in A}$ a set of $V$-modules, such that $V = M_0$.  Suppose we are given one-dimensional vector spaces $\cI_{i,j}^{i+j}$ whose elements are integral-weight intertwining operators $m^{i,j}_z: M_i \otimes M_j \to M_{i+j}((z))$ for all $i,j \in A$.  We are concerned with the following:
\vspace{5mm}

\textbf{Question:} Is it possible to choose nonzero elements in each $\cI_{i,j}^{i+j}$ so that $\bigoplus_{i \in A} M_i$ is endowed with a vertex algebra structure?
\vspace{5mm}

A priori, there is no reason for this to hold, because vertex algebras satisfy a locality property that requires compositions of our intertwining operators to satisfy some compatibilities, and we have not put in place any constraints on our maps.  In particular, the locality axiom is equivalent to the commutativity of the following diagram for each $i,j,k \in A$:

\[ \xymatrix{ M_i \otimes (M_j \otimes M_k) \ar[rr]^{\tau_{(12)}} \ar[rd]_{m^{i,j+k}_z \circ (1 \otimes m^{j,k}_w)} & & M_j \otimes (M_i \otimes M_k) \ar[dl]^{m^{j,i+k}_w \circ (1 \otimes m^{i,k}_z)} \\ & M_{i+j+k}[[z,w]][z^{-1},w^{-1},(z-w)^{-1}] } \]
and we have not even imposed the condition that the two composite maps are proportional to each other.

To convert this condition on intertwining operators into a homological problem, we rephrase it in terms of an equivalent condition that concerns singular commutativity and associativity.

Since we are interested in constructing several types of vertex algebras (e.g., M\"obius vertex algebras, quasi-conformal vertex algebras, vertex operator algebras), we will refrain from treating each case separately.  Instead, we consider a datum $\cT$, that is akin to a structured category of modules with certain spaces of integral-weight intertwining operators attached, but is used like a type.  We won't be particularly precise about the properties of $\cT$ that we need, but the results of this paper hold for any of the following examples:
\begin{enumerate}
\item The unrestricted setting: A $\cT$-vertex algebra $V$ is a vertex algebra, $V$-modules in $\cT$ are modules for the vertex algebra $V$, $\cT$-morphisms are $V$-module maps, and $\cT$-intertwining operators are integral-weight intertwining operators.
\item Worldsheet symmetry: A $\cT$-vertex algebra $V$ is a weighted (resp. M\"obius, quasiconformal, conformal) vertex algebra, $V$-modules in $\cT$ are integral-weight $V$-modules (resp. with M\"obius, quasiconformal, conformal structure), $\cT$-morphisms are $\{L(i)\}$-equivariant module maps for all applicable operators $L(i)$, and $\cT$-intertwining operators are integral-weight and compatible with the defined $L(i)$ operators.
\item VOAs: A $\cT$-vertex algebra $V$ is a vertex operator algebra, $V$-modules in $\cT$ are integral-weight $V$-modules in the VOA sense, i.e., with $L(0)$-spectrum bounded below and with finite multiplicity, morphisms are VOA-module maps, and intertwining operators are integral-weight VOA intertwining operator maps.
\item Strictly $G$-equivariant versions of the previous examples, where a group $G$ acts on $V$ by automorphisms, modules have $G$-action compatible with the action on $V$, and maps of modules and intertwining operators are $G$-equivariant.  This example would be more useful if we allowed projective actions on modules, but in our final results, we would have to replace $G$ with a central extension, and the notation would become cumbersome.
\end{enumerate}
In this language, our aim is to start with a $\cT$-vertex algebra $V$, a collection of modules $\{M_i\}_{i \in A}$ in $\cT$ with $V = M_0$, and build a $\cT$-vertex algebra structure on $\bigoplus_{i \in A} M_i$ using $\cT$-intertwining operators.  Given a $\cT$-vertex algebra $V$, and $V$-modules $M_1, M_2, M_3$ in $\cT$, we write $I_{\cT}\binom{M_3}{M_1,M_2}$ to denote the space of $\cT$-intertwining operators $M_1 \otimes M_2 \to M_3 ((z))$.  One may view the data making up $\cT$ as a rather primitive version of the notion of multicategory or pseudo-tensor category, but in the long run, some kind of type theory may be a better fit for making this example-driven study into a set of general theorems.

\subsection{Associativity}

In order for a choice of $\cT$-intertwining operators to yield a $\cT$-vertex algebra, it is necessary (but not sufficient) for the following associativity diagram to commute:
\[ \xymatrix{
& M_i \otimes M_j \otimes M_k \ar[dl]_{m^{i,j}_{z-w} \otimes 1} \ar[rd]^{1 \otimes m^{j,k}_w} \\
M_{i+j}((z-w)) \otimes M_k \ar[rd] \ar[dd]_{m^{j,k}_w} & & M_i \otimes M_{j+k}((w)) \ar[dl] \ar[dd]^{m^{i,j+k}_z} \\
& M_{i+j+k}[[z,w]][z^{-1},w^{-1},(z-w)^{-1}] \ar[ld]_{\iota_{w,z-w}} \ar[rd]_{\iota_{z,w}} \\
M_{i+j+k}((w))((z-w)) & & M_{i+j+k}((z))((w))
} \]
One way to approach the associativity condition is to assume the canonical composition operation  on $\cI^{i+j+k}_{i,j+k} \otimes \cI^{j+k}_{j,k}$ and $\cI^{i+j+k}_{i+j,k} \otimes \cI_{i,j}^{i+j}$ yields elements in the same one dimensional space $\cI^{i+j+k}_{i,j,k}$ of maps:
\[ M_i \otimes M_j \otimes M_k \to M_{i+j+k}[[z,w]][z^{-1},w^{-1},(z-w)^{-1}] \]
If this holds, then for any choice of nonzero elements in each $\cI^{i+j}_{i,j}$, the failure of the associativity diagram to commute is encoded by a set of nonzero constants $\{ F(i,j,k) \in \bC^\times \}_{i,j,k \in A}$.  We may then hope that these constants can be adjusted to 1 by suitably rescaling our intertwining operators.  To encode this more specifically, we have the following definition:

\begin{defn}
Let $V$ be a $\cT$-vertex algebra, let $A$ be an abelian group, and let $\{ M_i \}_{i \in A}$ be a set of $V$-modules in $\cT$, such that $M_0 = V$.  We define a one dimensional $\cT$-associativity datum to be a set $\{ \cI_{i,j}^{i+j} \}_{i,j \in A}$ of one-dimensional subspaces of $I_{\cT}\binom{M_{i+j}}{M_i,M_j}$ (i.e., whose elements are integral-weight intertwining operators $M_i \otimes M_j \to M_{i+j}((z))$), such that:
\begin{enumerate}
\item For each $i,j,k \in A$, there exists a one dimensional vector space $\cI_{i,j,k}^{i+j+k}$ whose elements are maps $M_i \otimes M_j \otimes M_k \to M_{i+j+k}[[z,w]][z^{-1}, w^{-1}, (z-w)^{-1}]$, such that composition of intertwining operators, combined with the canonical injections from Lemma \ref{lem:basic-embeddings}, induces isomorphisms from $\cI^{i+j+k}_{i,j+k} \otimes \cI^{j+k}_{j,k}$ to $\cI_{i,j,k}^{i+j+k}$ and from $\cI^{i+j+k}_{i+j,k} \otimes \cI_{i,j}^{i+j}$ to $\cI_{i,j,k}^{i+j+k}$.  
\item For any $i,j,k,\ell \in A$, there exists a one-dimensional space $\cI_{i,j,k,\ell}^{i+j+k+\ell}$ whose elements are maps:
\[ M_i \otimes M_j \otimes M_k \otimes M_\ell \to M_{i+j+k+\ell}[[z,w,t]][z^{-1}, w^{-1} t^{-1}, (z-w)^{-1}, (z-t)^{-1}, (w-t)^{-1}] \]
such that composition of intertwining operators, combined with one of the canonical injections from Lemma \ref{lem:star-of-embeddings}, induces an isomorphism from $\cI^{i+j+k+\ell}_{i,j+k+\ell} \otimes \cI^{j+k+\ell}_{j,k+\ell} \otimes \cI^{k + \ell}_{k,\ell}$ to $\cI_{i,j,k,\ell}^{i+j+k+\ell}$.
\item The space $\cI_{0,i}^i$ is spanned by the module structure map $act^i_z: V \otimes M_i \to M_i((z))$.
\item  The space $\cI_{i,0}^i$ is spanned by the intertwining operator $act^{i,*}_z: M_i \otimes V \to M_i((z))$ described in Lemma \ref{lem:skew-sym}.
\end{enumerate}

Given a one-dimensional $\cT$-associativity datum, and any choice of nonzero intertwining operators $\{ m^{i,j}_z \in \cI_{i,j}^{i+j} \}_{i,j \in A}$, we define the function $F: A^{\oplus 3} \to \bC^\times$ by
\[ m^{i,j+k}_z \circ (1 \otimes m^{j,k}_w) = F(i,j,k) m^{i+j,k}_w \circ (m^{i,j}_{z-w} \otimes 1) \]
\end{defn}

\begin{rem}
In order for this definition to make sense, it is necessary to choose certain embeddings of vector spaces (or more precisely, natural transformations of power series endofunctors on $Vect$), and Lemma \ref{lem:star-of-embeddings} ensures this can be done consistently.
\end{rem}

\begin{rem}
It does not seem to be necessary to assume the existence of the spaces $\cI_{i,j,k,\ell}^{i+j+k+\ell}$, as long as we assume that all of the modules $\{ M_i \}_{i \in A}$ are well-behaved simple currents (see Section \ref{sect:simple-currents}).  However, without strong conditions, it is possible for the composition of nonzero intertwining operators to be zero.
\end{rem}

\begin{lem} \label{lem:isoms}
Given a one-dimensional $\cT$-associativity datum $\{ \cI_{i,j}^{i+j} \}_{i,j \in A}$, the composition of intertwining operators induces isomorphisms between the following one-dimensional vector spaces, for all $i,j,k,\ell \in A$:
\[ \begin{array}{rcccl}
\cI_{i,j,k,\ell}^{i+j+k+\ell}
& \cong & \cI_{i,j+k+\ell}^{i+j+k+\ell} \otimes \cI_{j,k+\ell}^{j+k+\ell} \otimes \cI_{k,\ell}^{k+\ell}
& \cong &\cI_{i,j,k+\ell}^{i+j+k+\ell} \otimes \cI_{k,\ell}^{k+\ell} \\
& \cong & \cI_{i+j,k+\ell}^{i+j+k+\ell} \otimes \cI_{k,\ell}^{k+\ell} \otimes \cI_{i,j}^{i+j} 
& \cong &\cI_{i+j,k,\ell}^{i+j+k+\ell} \otimes \cI_{i,j}^{i+j} \\
& \cong & \cI_{i+j+k,\ell}^{i+j+k+\ell} \otimes \cI_{i+j,k}^{i+j+k} \otimes \cI_{i,j}^{i+j}
& \cong &\cI_{i+j+k,\ell}^{i+j+k+\ell} \otimes \cI_{i,j,k}^{i+j+k} \\
& \cong & \cI_{i+j+k,\ell}^{i+j+k+\ell} \otimes \cI_{i,j+k}^{i+j+k} \otimes \cI_{j,k}^{j+k}
& \cong &\cI_{i,j+k,\ell}^{i+j+k+\ell} \otimes \cI_{j,k}^{j+k} \\
& \cong & \cI_{i,j+k+\ell}^{i+j+k+\ell} \otimes \cI_{j+k,\ell}^{j+k+\ell} \otimes \cI_{j,k}^{j+k} 
& \cong &\cI_{i,j+k+\ell}^{i+j+k+\ell} \otimes \cI_{j,k,\ell}^{j+k+\ell} 
\end{array} \]
In particular, any element in one of the above spaces describes a composite of $\cT$-intertwining operators that takes elements of $M_i \otimes M_j \otimes M_k \otimes M_\ell$ and factors through a canonical embedding from $M_{i+j+k+\ell}[[z,w,t]][z^{-1}, w^{-1} t^{-1}, (z-w)^{-1}, (z-t)^{-1}, (w-t)^{-1}]$.
\end{lem}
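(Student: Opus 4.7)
My plan is to derive the eleven listed isomorphisms by iteratively applying the triple-composition isomorphism supplied by condition 1 of the one-dimensional $\cT$-associativity datum, and to use the star diagram of Lemma \ref{lem:star-of-embeddings} to certify that all twelve entries embed canonically into a common one-dimensional space of maps into $M_{i+j+k+\ell}[[z,w,t]][z^{-1}, w^{-1}, t^{-1}, (z-w)^{-1}, (z-t)^{-1}, (w-t)^{-1}]$. The very first isomorphism $\cI_{i,j,k,\ell}^{i+j+k+\ell} \cong \cI_{i,j+k+\ell}^{i+j+k+\ell} \otimes \cI_{j,k+\ell}^{j+k+\ell} \otimes \cI_{k,\ell}^{k+\ell}$ is merely the definition of $\cI_{i,j,k,\ell}^{i+j+k+\ell}$ from condition 2 of the datum.

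Next I would walk around the cycle and confirm that each transition is a single instance of condition 1 applied at one of the three binary-composition slots. For example, the step to $\cI_{i,j,k+\ell}^{i+j+k+\ell} \otimes \cI_{k,\ell}^{k+\ell}$ groups the first two factors via $\cI_{i,j+k+\ell}^{i+j+k+\ell} \otimes \cI_{j,k+\ell}^{j+k+\ell} \cong \cI_{i,j,k+\ell}^{i+j+k+\ell}$; the next step ungroups $\cI_{i,j,k+\ell}^{i+j+k+\ell}$ the other way to obtain $\cI_{i+j,k+\ell}^{i+j+k+\ell} \otimes \cI_{i,j}^{i+j}$; later transitions group/ungroup a triple in the ``middle'' or ``right'' slot, such as $\cI_{i+j,k}^{i+j+k} \otimes \cI_{i,j}^{i+j} \cong \cI_{i,j,k}^{i+j+k} \cong \cI_{i,j+k}^{i+j+k} \otimes \cI_{j,k}^{j+k}$. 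Each such substitution is, by condition 1, the isomorphism between two one-dimensional spaces of intertwining compositions.

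To finish the proof one must show that the twelve composite maps land in a common one-dimensional subspace of maps with the claimed target. The five genuinely distinct four-factor bracketings correspond precisely to the five inner vertices $V_{i(jk\ell)}$, $V_{ij(k\ell)}$, $V_{i(jk)\ell}$, $V_{[(ij)k]\ell}$, $V_{(ijk)\ell}$ of the star in Lemma \ref{lem:star-of-embeddings}, each of which comes equipped with a canonical embedding to $V_{ijk\ell}$; the entries of the list that factor through a triple-composition space $\cI^{a+b+c}_{a,b,c}$ simply reuse condition 1 to the remaining pair. Commutativity of the star then forces the images to coincide as a single one-dimensional subspace of maps into $V_{ijk\ell}$ (with coefficients in $M_{i+j+k+\ell}$).

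The main obstacle is pure book-keeping: matching each bracketing in the twelve-entry cycle to the correct vertex of the star, and checking that every application of condition 1 of the datum is compatible with the expansions prescribed in Lemma \ref{lem:star-of-embeddings} rather than with some superficially similar but distinct expansion. Once this dictionary is in place, the compatibility of the embeddings reduces to the five quadrilaterals verified in the proof of Lemma \ref{lem:star-of-embeddings}, and the final sentence of the lemma--that any element of any of the twelve spaces defines a map factoring through the canonical embedding of the multi-variable power series space--follows immediately from the commutativity of the star.
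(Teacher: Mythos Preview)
Your approach matches the paper's: condition 2 of the datum gives the first isomorphism, repeated applications of condition 1 give the remaining ones, and the final factoring statement is handled via Lemma \ref{lem:star-of-embeddings}. Two small corrections to your exposition: the star's arrows point \emph{from} $V_{ijk\ell}$ outward (so the inner vertices receive embeddings from the center, not to it), and commutativity of the star by itself does not force the composites to factor through $V_{ijk\ell}$---what does is the elementary observation, which the paper states explicitly, that a scalar multiple of a linear map factoring through a subspace again factors through that subspace.
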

\begin{proof}
The first isomorphism is the condition defining $\cI_{i,j,k,\ell}^{i+j+k+\ell}$.  The other isomorphisms follow from the conditions defining the set of spaces $\{ \cI_{i,j,k}^{i+j+k}\}_{i,j,k \in A}$.  The fact that all of the maps factor through $M_{i+j+k+\ell}[[z,w,t]][z^{-1}, w^{-1} t^{-1}, (z-w)^{-1}, (z-t)^{-1}, (w-t)^{-1}]$, and hence the corresponding commutative diagram of embeddings given in Lemma \ref{lem:star-of-embeddings}, follows from the standard fact that if $f: U \to V$ is a linear map of vector spaces that factors as a composite of linear maps $U \to W \to V$, and $g$ is a constant multiple of $f$, then $g$ factors through $W$.
\end{proof}

\begin{lem} \label{lem:pentagon}
Given a one-dimensional $\cT$-associativity datum, and any choice of nonzero intertwining operators $\{ m^{i,j}_z \in \cI_{i,j}^{i+j} \}_{i,j \in A}$, the function $F$ satisfies the pentagon identity: 
\[ F(i,j,k) F(i,j+k,\ell) F(j,k,\ell) = F(i+j,k,\ell) F(i,j,k+\ell) \]
for any $i,j,k,\ell \in A$.  In other words, the function $F$, viewed as a group cohomology 3-cochain, is in fact a 3-cocycle.
\end{lem}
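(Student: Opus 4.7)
The plan is to execute the standard pentagon argument: apply three elementary associativity relations along two different paths connecting the fully right-nested 4-fold composite to the fully left-nested one, and read off the pentagon equation from the requirement that both paths yield the same scalar. The key technical enabler is Lemma \ref{lem:isoms}, which guarantees that every intermediate 4-fold composition lives in the same one-dimensional space $\cI^{i+j+k+\ell}_{i,j,k,\ell}$, so that each pairwise comparison differs only by a scalar in $\bC^\times$.

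Fix $i,j,k,\ell \in A$ and consider the composite
\[ m^{i,j+k+\ell}_z \circ (1 \otimes m^{j,k+\ell}_w) \circ (1 \otimes 1 \otimes m^{k,\ell}_t) \colon M_i \otimes M_j \otimes M_k \otimes M_\ell \to M_{i+j+k+\ell}((z))((w))((t)). \]
By Lemma \ref{lem:isoms}, this map factors through $V_{ijk\ell}$ (in the notation of Lemma \ref{lem:star-of-embeddings}) and represents a nonzero element of $\cI^{i+j+k+\ell}_{i,j,k,\ell}$. On one path I apply the defining relation of $F$ three times in the sequence $(j,k,\ell)$, $(i,j+k,\ell)$, $(i,j,k)$: first rewriting the inner $m^{j,k+\ell}_w \circ (1 \otimes m^{k,\ell}_t)$ as $F(j,k,\ell)\, m^{j+k,\ell}_t \circ (m^{j,k}_{w-t} \otimes 1)$, then pulling the outer $m^{j+k,\ell}_t$ through $m^{i,j+k+\ell}_z$ using $F(i,j+k,\ell)$, and finally reassociating the resulting $m^{i,j+k}_{z-t} \circ (1 \otimes m^{j,k}_{w-t})$ with $F(i,j,k)$. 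On the other path I apply the relation in the sequence $(i,j,k+\ell)$, $(i+j,k,\ell)$: first rewriting $m^{i,j+k+\ell}_z \circ (1 \otimes m^{j,k+\ell}_w)$ using $F(i,j,k+\ell)$, then rewriting $m^{i+j,k+\ell}_w \circ (1 \otimes m^{k,\ell}_t)$ using $F(i+j,k,\ell)$. Both paths terminate at the fully left-nested composite $m^{i+j+k,\ell}_t \circ (m^{i+j,k}_{w-t} \otimes 1) \circ (m^{i,j}_{z-w} \otimes 1 \otimes 1)$, multiplied by $F(i,j,k)F(i,j+k,\ell)F(j,k,\ell)$ in the first case and by $F(i,j,k+\ell)F(i+j,k,\ell)$ in the second. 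Since the common endpoint is a nonzero element of $\cI^{i+j+k+\ell}_{i,j,k,\ell}$, the two scalars must be equal, which is exactly the pentagon identity.

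The only non-routine issue is verifying that each intermediate rewrite is legitimate, i.e.\ that at every step the two composites being compared factor through the same subspace $M_{i+j+k+\ell}[[z,w,t]][z^{-1},w^{-1},t^{-1},(z-w)^{-1},(z-t)^{-1},(w-t)^{-1}]$ and are thereby genuinely proportional via a single scalar $F(\cdot,\cdot,\cdot)$. This is where Lemma \ref{lem:isoms} does the real work: it identifies all twelve 4-fold composites appearing in the argument with the common one-dimensional space $\cI^{i+j+k+\ell}_{i,j,k,\ell}$, and its proof already invoked the commutative star of embeddings from Lemma \ref{lem:star-of-embeddings} to guarantee compatibility of the canonical injections along every face of the pentagon. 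With that in hand, the argument is purely a scalar bookkeeping exercise; no further power-series calculation is required.
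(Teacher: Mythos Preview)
Your proposal is correct and follows essentially the same pentagon-chasing argument as the paper's proof. The only cosmetic difference is that the paper evaluates the composites on a chosen nonzero tensor $u_i \otimes u_j \otimes u_k \otimes u_\ell$ and compares scalar multiples of the resulting element $u'$, whereas you work directly at the level of maps in the one-dimensional space $\cI^{i+j+k+\ell}_{i,j,k,\ell}$; since that space is one-dimensional and the relevant composites are nonzero by Lemma~\ref{lem:isoms}, the two presentations are equivalent.
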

\begin{proof}
For any $i,j,k,\ell \in A$, let $u_i \in M_i$, $u_j \in M_j$, $u_k \in M_k$ and $u_\ell \in M_\ell$ satisfy:
\[ m^{i,j+k+\ell}_z \circ (1 \otimes m^{j,k+\ell}_w) \circ (1 \otimes 1 \otimes m^{k,\ell}_t) (u_i \otimes u_j \otimes u_k \otimes u_\ell) \neq 0 \in M_{i+j+k+\ell}((z))((w))((t)). \]
For convenience, we write $M' = M_{i+j+k+\ell}[[z,w,t]][z^{-1}, w^{-1} t^{-1}, (z-w)^{-1}, (z-t)^{-1}, (w-t)^{-1}]$.  By the existence of $\cI^{i+j+k+\ell}_{i,j,k,\ell}$, a satisfactory choice of elements $(u_i,u_j,u_k,u_\ell)$ exists, and the map $m^{i,j+k+\ell}_z \circ (1 \otimes m^{j,k+\ell}_w) \circ (1 \otimes 1 \otimes m^{k,\ell}_t)$ is the composite of a map to $M'$ followed by the injection $\iota_{t,w,z}$.  We let $u'$ be the image of $u_i \otimes u_j \otimes u_k \otimes u_\ell$ in $M'$.

By Lemma \ref{lem:isoms}, we can follow the following diagram
\[ \xymatrix{ & & ((ij)k)\ell & & \\
(i(jk))\ell \ar[rru]^{F(i,j,k)} & & & & (ij)(k\ell) \ar[llu]_{F(i+j,k,\ell)} \\ \\
& i((jk)\ell) \ar[luu]^{F(i,j+k.\ell)} & & i(j(k\ell)) \ar[ll]_{F(j,k,\ell)} \ar[ruu]_{F(i,j,k+\ell)} & } \]
to deduce statements about the composition of other intertwining operators:
\begin{enumerate}
\item The composite $m^{i+j,k+\ell}_w \circ (1 \otimes m^{k,\ell}_t) \circ (m^{i,j}_{z-w} \otimes 1 \otimes 1)$ factors through $M'$, and takes $u_i \otimes u_j \otimes u_k \otimes u_\ell$ to $F(i,j,k+\ell) u'$.
\item The composite $m^{i,j+k+\ell}_z \circ (1 \otimes m^{j+k,\ell}_t) \circ (1 \otimes m^{j,k}_{w-t} \otimes 1)$ factors through $M'$, and takes $u_i \otimes u_j \otimes u_k \otimes u_\ell$ to $F(j,k,\ell) u'$.
\item The composite $m^{i+j+k,\ell}_t \circ (m^{i,j+k}_{z-t} \otimes 1) \circ (1 \otimes m^{j,k}_{w-t} \otimes 1)$ factors through $M'$, and takes $u_i \otimes u_j \otimes u_k \otimes u_\ell$ to $F(i,j+k,\ell) \cdot F(j,k,\ell) u'$.
\item The composite $m^{i+j+k,\ell}_t \circ (m^{i+j,k}_{w-t} \otimes 1) \circ (m^{i,j}_{z-w} \otimes 1 \otimes 1)$ factors through $M'$, and takes $u_i \otimes u_j \otimes u_k \otimes u_\ell$ to both $F(i,j,k) \cdot F(i,j+k,\ell) \cdot F(j,k,\ell) u'$ and $F(i+j,k,\ell) \cdot F(i,j,k+\ell) u'$.
\end{enumerate}
We find that $F(i,j,k) \cdot F(i,j+k,\ell) \cdot F(j,k,\ell) u' = F(i+j,k,\ell) \cdot F(i,j,k+\ell) u'$.  Since $u'$ is a nonzero element of a complex vector space, the result follows.
\end{proof}

\begin{defn}
Given a one-dimensional $\cT$-associativity datum, a choice $\{ m^{i,j}_z \in \cI_{i,j}^{i+j} \}_{i,j \in A}$ of nonzero intertwining operators is normalized if for all $i \in A$, $m^{0,i}_z = act_z$ and $m^{i,0}_z = act^*_z$.
\end{defn}

\begin{lem} \label{lem:normalized}
Suppose we are given a one-dimensional $\cT$-associativity datum.  Then, for any normalized choice $\{ m^{i,j}_z \in \cI_{i,j}^{i+j} \}_{i,j \in A}$ of nonzero intertwining operators, we have $F(0,i,j) = F(i,0,j) = F(i,j,0) = 1$ for all $i,j \in A$.  In other words, $F$ describes a normalized group cohomology 3-cocycle in the sense of Definition \ref{defn:normalized}.
\end{lem}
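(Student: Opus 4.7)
The plan is to substitute each of the three index values $i=0$, $j=0$, $k=0$ in turn into the defining equation
\[ m^{i,j+k}_z\circ(1\otimes m^{j,k}_w)=F(i,j,k)\cdot m^{i+j,k}_w\circ(m^{i,j}_{z-w}\otimes 1), \]
and check that in each case the normalization forces $F=1$ by means of the $V$-compatibility of the remaining nontrivial intertwining operator.

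The case $F(0,i,j)=1$ is the clean one. After setting the first index to $0$, the normalization turns $m^{0,i+j}_z$ into $act^{i+j}_z$ and $m^{0,i}_{z-w}$ into $act^i_{z-w}$, so the defining equation becomes $act^{i+j}_z\circ(1\otimes m^{i,j}_w) = F(0,i,j)\cdot m^{i,j}_w\circ(act^i_{z-w}\otimes 1)$. These are precisely maps (a) and (b) in the $V$-compatibility condition for $m^{i,j}_w$, which coincide after factoring through $M_{i+j}[[z,w]][z^{-1},w^{-1},(z-w)^{-1}]$; hence $F(0,i,j)=1$.

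The cases $F(i,j,0)=1$ and $F(i,0,k)=1$ require more work because the normalization replaces one or two of the intertwining operators by the skew-symmetric module actions $act^{j,*}$, $act^{i+j,*}$, or $act^{i,*}$. The strategy is: (1) apply Lemma \ref{lem:skew-sym} to rewrite each $act^*$ as $e^{\xi L(-1)}$ composed with a suitable $act$ at the reflected variable; (2) push the exponential across the remaining intertwining operator using the identity $I_\alpha(e^{\xi L(-1)}u\otimes v)=e^{\xi\partial_\alpha}I_\alpha(u\otimes v)$, which follows from iterating translation covariance; (3) interpret $e^{\xi\partial_\alpha}$ as a variable shift via the formal Taylor theorem (Lemma \ref{lem:taylor}) and Lemma \ref{lem:weird-isom}. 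For $F(i,j,0)$ the two sides both acquire a common factor $e^{wL(-1)}$, and the remaining equality becomes the agreement of forms (a) and (c) of the $V$-compatibility of $m^{i,j}_{z-w}$ with auxiliary $V$-action variable $-w$. For $F(i,0,k)$, the manipulations reduce the RHS to $m^{i,k}_z(act^i_{w-z}(v\otimes x)\otimes y)$, which is form (b) of the $V$-compatibility of $m^{i,k}_z$ with auxiliary variable $w$, matching the LHS's form (c) expression.

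The main obstacle is purely bookkeeping: one must permute tensor factors to shift $V$ between the first and middle slots, track the sign-swaps between $(z-w)$ and $(w-z)$ produced by reflection, and verify that the common subspaces appearing in the $V$-compatibility conditions (after the auxiliary variable substitutions) are identified correctly across the shifted series expansions. None of these steps is conceptually difficult, but the notation becomes quickly unwieldy.
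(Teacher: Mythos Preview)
Your approach is correct but genuinely different from the paper's. You establish all three identities $F(0,i,j)=F(i,0,j)=F(i,j,0)=1$ independently, each by a direct appeal to the $V$-compatibility axiom for intertwining operators (the first case matching forms (a) and (b) cleanly, the other two requiring the skew-symmetry and Taylor-shift gymnastics you describe). The paper instead proves only one case directly---namely $F(i,0,j)=1$, and by a different trick: it evaluates at $u\otimes\mathbf{1}\otimes v$, so that the inner operators collapse to $m^{i,j}_z(u\otimes v)$ on one side and $m^{i,j}_w(e^{(z-w)L(-1)}u\otimes v)$ on the other, reducing the comparison to a single Taylor shift. It then invokes the pentagon identity (Lemma~\ref{lem:pentagon}, already proved) with $k=\ell=0$ and with $i=j=0$ to obtain $F(i,j,0)=1$ and $F(0,k,\ell)=1$ respectively.

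Your route is more self-contained (it does not require the pentagon identity as a prerequisite) and makes explicit that normalization is really just $V$-compatibility in disguise; the paper's route is considerably shorter and avoids the variable bookkeeping you rightly flag as the main obstacle in your last two cases.
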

\begin{proof}
Fix elements $u \in M_i$ and $v \in M_j$ such that $m^{i,j}_w(u \otimes v) \neq 0$, and set $g(w) = \sum_{n\geq r} g_n w^n = m^{i,j}_w(u \otimes v) \in M_{i+j}((w))$.  By the defining property of $F(i,0,j)$, $m^{i,j}_z \circ (1 \otimes m^{0,j}_w)$ and $F(i,0,j) m^{i,j}_w \circ (m^{i,0}_{z-w} \otimes 1)$ are given by a single map from $M_i \otimes M_0 \otimes M_j$ to $M_{i,j}[[z,w]][z^{-1}, w^{-1}, (z-w)^{-1}]$, followed by the standard injections.  Applying this map to $u \otimes 1 \otimes v$, we find that $m^{i,j}_z (u \otimes v)$ and $F(i,0,j) m^{i,j}_w (e^{(z-w)L(-1)}u \otimes v)$ are expansions of the same element of $M_{i,j}[[z,w]][z^{-1}, w^{-1}, (z-w)^{-1}]$.  The left side is $g(z)$, while the right side is equal to $F(i,0,j) e^{(z-w)\partial_w} g(w)$ by Lemma \ref{lem:vafacts}.  By Lemma \ref{lem:taylor} applied to $M_{i,j}((w))[[z-w]]$, the right side is also equal to $F(i,0,j) g(z)$, so $F(i,0,j) = 1$.

Now, we use Lemma \ref{lem:pentagon}.  By substituting $k = \ell = 0$, we obtain $F(i,j,0) F(i,j,0) F(j,0,0) = F(i+j,0,0) F(i,j,0)$, and together with the relation $F(j,0,0) = F(i+j,0,0) = 1$ established in the previous paragraph, we get $F(i,j,0) = 1$.  By substituting $i=j=0$, we find that the relation $F(0,0,k) F(0,k,\ell) F(0,k,\ell) = F(0,k,\ell) F(0,0,k+\ell)$ together with the relation $F(0,0,k) = F(0,0,k+\ell) = 1$ established in the previous paragraph implies $F(0,k,\ell) = 1$.

\end{proof}


\subsection{Commutativity}

\begin{defn}
Let $V$ be a $\cT$-vertex algebra, let $A$ be an abelian group, and let $\{ M_i \}_{i \in A}$ be a set of $V$-modules in $\cT$, such that $M_0 = V$.  We define a one-dimensional $\cT$-commutativity datum to be a one-dimensional $\cT$-associativity datum that satisfies the additional condition that for any $i,j \in A$, and any nonzero $m^{i,j}_z \in \cI^{i+j}_{i,j}$, the intertwining operator $m^{i,j,*}_z$ is an element of $\cI^{j+i}_{j,i}$. In other words, for any nonzero $m^{j,i}_z \in \cI^{j+i}_{j,i}$, there exists $\lambda \in \bC^\times$ such that for any $u \otimes v \in M_i \otimes M_j$, the elements $\lambda m^{i,j}_z(u \otimes v)$ and $e^{zL(-1)}m^{j,i}_{-z}(v \otimes u)$ of $M_{i+j}((z))$ are equal.
\end{defn}

\begin{rem}
If $\cT$-vertex algebras are vertex operator algebras, then a commutativity datum is quite similar to Huang's notion of ``$\bZ$-graded meromorphic locally grading-restricted conformal intertwining algebra'' \cite{H04}, where the set $\cA$ is our group $A$, and the vector spaces $\cV_{i,j}^k$ are $\cI_{i,j}^{i+j}$ when $k=i+j$ and $0$ otherwise.  However, Huang requires a convergence condition for all compositions of intertwining operators, while we only require three-fold compositions to be well-behaved.  The reader should be aware that although the word ``algebra'' appears in the name, Huang's notion of intertwining algebra does not have a distinguished choice of multiplication operator in the defining data.
\end{rem}

\begin{lem} \label{lem:comm-span}
Suppose we are given a one-dimensional $\cT$-commutativity datum.  Then for any choice of nonzero intertwining operators $\{ m^{i,j}_z \in \cI_{i,j}^{i+j} \}_{i,j \in A}$, and any $i,j,k \in A$, the compositions $m^{i+j,k}_w \circ (m^{i,j}_{z-w} \otimes 1)$ and $m^{i+j,k}_z \circ (m^{j,i}_{w-z} \otimes 1) \circ \tau_{(12)}$, factor as maps $M_i \otimes M_j \otimes M_k \to M_{i+j+k}[[z,w]][z^{-1},w^{-1},(z-w)^{-1}]$ that differ by a constant multiple that does not depend on $k$, followed by inclusions $\iota_{w,z-w}$ and $\iota_{z,z-w}$.  In particular, the vector spaces $\tau_{(12)}^*(\cI^{i+j+k}_{i,j,k})$ and $\cI^{j+i+k}_{j,i,k}$ are equal.
\end{lem}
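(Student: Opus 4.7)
The plan is to use the commutativity datum to rewrite composition (B) as a scalar multiple of composition (A), with scalar $\lambda(i,j)$ depending only on the pair of spaces $\cI_{i,j}^{i+j}$ and $\cI_{j,i}^{j+i}$, then to invoke Lemma \ref{lem:weird-isom} to identify the two resulting expansions (living in $M_{i+j+k}((z))((z-w))$ and $M_{i+j+k}((w))((z-w))$ respectively) as images of a common underlying element of $M_{i+j+k}[[z,w]][z^{-1},w^{-1},(z-w)^{-1}]$ under $\iota_{z,z-w}$ and $\iota_{w,z-w}$.

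Fix $u \in M_i$, $v \in M_j$, $c \in M_k$. The commutativity datum hypothesis provides $\lambda(i,j) \in \bC^\times$, depending only on $\cI_{i,j}^{i+j}$ and $\cI_{j,i}^{j+i}$ (and in particular not on $k$, $u$, $v$, $c$), with
\[ m^{j,i}_{w-z}(v \otimes u) = \lambda(i,j)\, e^{(w-z)L(-1)}\, m^{i,j}_{z-w}(u \otimes v). \]
Substituting into (B), using $j+i = i+j$ to identify $m^{j+i,k} = m^{i+j,k}$, and applying the translation covariance relation $m^{i+j,k}_z(e^{aL(-1)}x \otimes c) = e^{a\partial_z} m^{i+j,k}_z(x \otimes c)$ with $a = w-z$, we obtain
\[ (\mathrm{B})(u \otimes v \otimes c) = \lambda(i,j) \cdot e^{(w-z)\partial_z}\, m^{i+j,k}_z\bigl(m^{i,j}_{z-w}(u \otimes v) \otimes c\bigr) \in M_{i+j+k}((z))((z-w)). \]

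By the associativity datum axiom for $(i,j,k)$, composition (A) applied to $u \otimes v \otimes c$ equals $\iota_{w,z-w}(F)$ for a unique $F \in M_{i+j+k}[[z,w]][z^{-1}, w^{-1}, (z-w)^{-1}]$; the assignment $u \otimes v \otimes c \mapsto F$ defines the factored map $\tilde{A} \in \cI^{i+j+k}_{i,j,k}$. Expanding $m^{i,j}_{z-w}(u \otimes v) = \sum_\alpha g_\alpha(z-w)^\alpha$ with $g_\alpha \in M_{i+j}$, a coefficientwise application of Lemma \ref{lem:weird-isom} (using that $\partial_z$ treats $z-w$ as a constant, and that $e^{(w-z)\partial_z} z^n$ is precisely $\iota_{z,z-w}(w^n)$) shows
\[ e^{(w-z)\partial_z}\, m^{i+j,k}_z\bigl(m^{i,j}_{z-w}(u \otimes v) \otimes c\bigr) = \iota_{z,z-w}(F). \]
Hence $(\mathrm{B})(u \otimes v \otimes c) = \lambda(i,j)\,\iota_{z,z-w}(F)$, so (B) factors through $M_{i+j+k}[[z,w]][z^{-1}, w^{-1}, (z-w)^{-1}]$ via $\iota_{z,z-w}$, with factored map $\lambda(i,j)\tilde{A}$.

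Since $\lambda(i,j)$ is independent of $k$, the factored maps of (A) and (B) differ by a constant not depending on $k$. For the final assertion, let $\tilde{B}_0: M_j \otimes M_i \otimes M_k \to M_{i+j+k}[[z,w]][z^{-1},w^{-1},(z-w)^{-1}]$ denote the factored map of $m^{j+i,k}_z \circ (m^{j,i}_{w-z} \otimes 1)$ via $\iota_{z,z-w}$; by the associativity datum axiom for $(j,i,k)$, $\tilde{B}_0$ spans $\cI^{j+i+k}_{j,i,k}$. The factored map of (B) is $\tilde{B}_0 \circ \tau_{(12)} = \tau_{(12)}^*(\tilde{B}_0)$, and it equals $\lambda(i,j)\tilde{A}$; thus $\tau_{(12)}^*(\tilde{B}_0) = \lambda(i,j)\tilde{A}$ as elements of $\Hom(M_i \otimes M_j \otimes M_k,\, M_{i+j+k}[[z,w]][\ldots])$, whence $\tau_{(12)}^*(\cI^{j+i+k}_{j,i,k}) = \cI^{i+j+k}_{i,j,k}$, i.e., $\tau_{(12)}^*(\cI^{i+j+k}_{i,j,k}) = \cI^{j+i+k}_{j,i,k}$. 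The principal technical obstacle is the coefficientwise identification of $e^{(w-z)\partial_z}$ on $M_{i+j+k}((z))((z-w))$ with the change of embedding from $\iota_{w,z-w}$ to $\iota_{z,z-w}$ on $M_{i+j+k}[[z,w]][z^{-1},w^{-1},(z-w)^{-1}]$-elements, which is exactly what Lemma \ref{lem:weird-isom} provides; once this is in hand, the independence of $\lambda(i,j)$ from $k$ is automatic.
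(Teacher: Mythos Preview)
Your argument is correct and follows essentially the same route as the paper's proof: both use the commutativity datum to relate $m^{j,i}_{w-z}$ to $e^{(w-z)L(-1)}m^{i,j}_{z-w}$, invoke translation covariance of the outer intertwining operator to convert $e^{(w-z)L(-1)}$ into $e^{(w-z)\partial}$, and then appeal to Lemma~\ref{lem:weird-isom} to identify the resulting element of $M_{i+j+k}((z))((z-w))$ with the $\iota_{z,z-w}$-expansion of the same rational-function element $F$ whose $\iota_{w,z-w}$-expansion is composition (A). The only difference is direction: the paper substitutes into (A) and pushes toward (B), while you substitute into (B) and pull back to (A); your write-up is in fact cleaner and more explicit about the role of Lemma~\ref{lem:weird-isom} than the paper's.

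One small bookkeeping remark on your final paragraph: the associativity datum for $(j,i,k)$ literally defines $\cI^{j+i+k}_{j,i,k}$ via the factorization of $m^{j+i,k}_w \circ (m^{j,i}_{z-w} \otimes 1)$ through $\iota_{w,z-w}$, whereas your $\tilde{B}_0$ is the factorization of $m^{j+i,k}_z \circ (m^{j,i}_{w-z} \otimes 1)$ through $\iota_{z,z-w}$; these differ by the formal swap $z \leftrightarrow w$ in the target. So strictly speaking $\tilde{B}_0$ spans the $z \leftrightarrow w$-image of $\cI^{j+i+k}_{j,i,k}$ rather than $\cI^{j+i+k}_{j,i,k}$ itself. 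This is harmless for the way the lemma is actually used downstream (in the hexagon argument everything lives in the symmetric space $M'$), and the paper's own statement carries the same implicit identification, but it is worth being aware of.
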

\begin{proof}
Let $u \otimes v \otimes x \in M_i \otimes M_j \otimes M_k$ satisfy $m^{i+j,k}_w (m^{i,j}_{z-w} (u \otimes v) \otimes x) \neq 0 \in M_{i+j+k}((w))((z-w))$.  Such an element exists, by the hypothesis that $\cI^{i+j+k}_{i,j,k}$ is nonzero.  By the condition defining commutativity datum, $\lambda m^{i,j}_{z-w}(u \otimes v) = e^{(z-w)L(-1)} m^{j,i}_{w-z}(v \otimes u)$ for some $\lambda \in \bC^\times$ that depends only on $i$ and $j$.  By the translation covariance axiom applied to $m^{i+j,k}_w$, we have
\[ m^{i+j,k}_w( (e^{(z-w)L(-1)} m^{j,i}_{w-z}(v \otimes u)) \otimes x) = e^{(z-w)\partial_w} m^{i+j,k}_w(m^{j,i}_{w-z}(v \otimes u) \otimes x) \]
as elements of $M_{i+j+k}((w))((z-w))$.  However, these series are the image of a unique element of $M_{i+j+k}[[z,w]][z^{-1}, w^{-1}, (z-w)^{-1}]$ along the embedding given in Lemma \ref{lem:basic-embeddings}.  Thus, by Lemma \ref{lem:weird-isom}, the expression $m^{i+j,k}_z (m^{j,i}_{w-z}(v \otimes u) \otimes x)$ is the expansion in $M_{i+j+k}((z))((z-w))$ of the same element.  Assembling these equalities, we find that $\lambda m^{i+j,k}_w \circ (m^{i,j}_{z-w}(u \otimes v) \otimes x)$ and $m^{i+j,k}_z (m^{j,i}_{w-z}(v \otimes u) \otimes x)$ are expansions of the same nonzero element of $M_{i+j+k}[[z,w]][z^{-1}, w^{-1}, (z-w)^{-1}]$.
\end{proof}

\begin{defn}
Given a one dimensional $\cT$-commutativity datum, for any choice of nonzero operators $\{ m^{i,j}_z \in \cI_{i,j}^{i+j} \}_{i,j \in A}$ we define the function $\Omega: A^{\oplus 2} \to \bC^\times$ by
\[ m^{i,j}_z = \Omega(i,j) e^{zL(-1)}m^{j,i}_{-z}\circ \tau_{(12)} \].
\end{defn}

\begin{rem}
We note that by Lemma \ref{lem:comm-span}, the following holds for all $k \in A$:
\[ m^{i+j,k}_w \circ (m^{i,j}_{z-w} \otimes 1) = \Omega(i,j) m^{j+i,k}_z \circ (m^{j,i}_{-z+w} \otimes 1) \circ \tau_{(12)}. \]
\end{rem}

\begin{rem}
One may alternatively define $\Omega$ by setting $B(i,j,k)$ to be the defect in the original locality diagram, and setting $\Omega(i,j) = F(i,j,k)^{-1} B(i,j,k) F(j,i,k)$ (see \cite{DL93} Chapter 12).  In terms of intertwining operators, we see that the two composite maps
\[ m^{i,j+k}_z \circ (1 \otimes m^{j,k}_w) \quad \text{and} \quad F(j,i,k)^{-1} \Omega(i,j) F(i,j,k) m^{j,i+k}_w \circ (1 \otimes m^{i,k}_z) \circ \tau_{(12)} \]
 factor through equal maps from $M_i \otimes M_j \otimes M_k$ to $M_{i+j+k}[[z,w]][z^{-1},w^{-1},(z-w)^{-1}]$.  Diagrammatically, $\Omega(i,j)$ can be represented as the defect obstructing commutativity in the outer pentagon:

\[ \xymatrix{
(M_i \otimes M_j) \otimes M_k \ar[r]^{F(i,j,k)^{-1}} & M_i \otimes (M_j \otimes M_k) \ar[rr]^{B(i,j,k)} & & M_j \otimes (M_i \otimes M_k) \ar[r]^{F(j,i,k)} & (M_j \otimes M_i) \otimes M_k \\
& & {}\save[]*{M_{i+j+k}[[z,w]][z^{-1},w^{-1},(z-w)^{-1}]} \ar@{<-}[ul] \ar@{<-}[ull] \ar@{<-}[ur] \ar@{<-}[urr] \restore
} \] 
\end{rem}

\begin{lem} \label{lem:hexagon}
Given a one dimensional $\cT$-commutativity datum, for any choice of nonzero elements $\{ m^{i,j}_z \in \cI_{i,j}^{i+j} \}_{i,j \in A}$, the functions $F$ and $\Omega$ satisfy the hexagon conditions:
\begin{enumerate}
\item $F(i,j,k)^{-1}\Omega(i,j+k) F(j,k,i)^{-1} = \Omega(i,j) F(j,i,k)^{-1} \Omega(i,k)$
\item $F(i,j,k) \Omega(i+j,k) F(k,i,j) = \Omega(j,k) F(i,k,j) \Omega(i,k)$
\end{enumerate}
\end{lem}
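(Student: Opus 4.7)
The plan is to mimic the argument of Lemma \ref{lem:pentagon}, chasing a nonzero element around a hexagonal diagram. Fix $i,j,k\in A$ and choose a triple $(u,v,x)\in M_i\otimes M_j\otimes M_k$ whose image under the composition $m^{i+j,k}_w\circ(m^{i,j}_{z-w}\otimes 1)$ is a nonzero element $\eta$ of the common space $M_{i+j+k}[[z,w]][z^{-1},w^{-1},(z-w)^{-1}]$ via the canonical embedding of Lemma \ref{lem:basic-embeddings}; such a triple exists since $\cI^{i+j+k}_{i,j,k}$ is one-dimensional and nonzero. Each of the two identities will be obtained by expressing the appropriate scalar multiple of $\eta$ in two ways, each obtained by three applications of either the defining relation for $F$ or the commutativity relation of Lemma \ref{lem:comm-span} that defines $\Omega$.

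For hexagon (1), I would chase $\eta$ along the following two routes, each using three of our identities:
\begin{enumerate}
\item Start at the $(ij)k$ configuration, apply $F(i,j,k)^{-1}$ to move to $i(jk)$, then apply the $\Omega(i,j+k)$ braiding of Lemma \ref{lem:comm-span} to pass the $M_i$ factor past the $m^{j,k}$ block (yielding a composition of the form $m^{j+k,i}\circ(m^{j,k}\otimes 1)\circ\tau_{(12)}$, in the $(jk)i$ configuration), and finally apply $F(j,k,i)^{-1}$ to reach $j(ki)$.
\item Start at the same $(ij)k$ configuration, apply $\Omega(i,j)$ (via Lemma \ref{lem:comm-span}, which guarantees that this $\Omega$-braiding respects the postcomposition with $m^{?,k}$) to reach $(ji)k$, apply $F(j,i,k)^{-1}$ to reach $j(ik)$, and then braid $M_i$ past $M_k$ using $\Omega(i,k)$ to reach $j(ki)$.
\end{enumerate}
Both routes terminate at the same composite map in the one-dimensional space $\tau_\sigma^*\cI_{j,k,i}^{i+j+k}$ (where $\sigma$ is the cyclic permutation moving $i$ to the rear), and both express $\eta$ as a scalar multiple of a common generator there; equating the two scalars yields hexagon (1).

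Hexagon (2) is established identically, using the opposite direction of braiding: the first path goes $i(jk)\xrightarrow{F(i,j,k)}(ij)k\xrightarrow{\Omega(i+j,k)}k(ij)\xrightarrow{F(k,i,j)}(ki)j$, and the second path goes $i(jk)\xrightarrow{\Omega(j,k)}i(kj)\xrightarrow{F(i,k,j)}(ik)j\xrightarrow{\Omega(i,k)}(ki)j$.

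The main obstacle is bookkeeping of variable substitutions: each application of $\Omega$ introduces a sign flip in the relevant formal variable together with an $e^{(\cdot)L(-1)}$ twist, while each application of $F$ passes between expansions in $(z,w)$ and $(z-w,w)$. Rather than tracking these substitutions explicitly, I would work throughout inside the common space $M_{i+j+k}[[z,w]][z^{-1},w^{-1},(z-w)^{-1}]$, invoking Lemma \ref{lem:weird-isom} to identify the two natural expansions differing by an $e^{(z-w)L(-1)}$ twist as arising from a single underlying element, and relying on Lemma \ref{lem:comm-span} to guarantee that the $\Omega$-braiding identities extend to compositions with a third intertwining operator. Once everything lives in a common one-dimensional space spanned by $\eta$, the two accumulated scalars must agree, producing the claimed hexagon identities.
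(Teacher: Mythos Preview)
Your hexagon chase is the right picture, but there is a genuine gap in the three-input framework you propose. Lemma \ref{lem:comm-span} only tells you that the $\Omega(i,j)$-swap is compatible with \emph{post}-composition by some $m^{?,k}_w$, i.e., it handles $(ij)k \leadsto (ji)k$. It does \emph{not} handle the swaps you need in the other steps: $i(jk) \leadsto (jk)i$ (route 1 of hexagon (1)), $j(ik) \leadsto j(ki)$ (route 2 of hexagon (1)), $(ij)k \leadsto k(ij)$ and $i(jk) \leadsto i(kj)$ (hexagon (2)). For these you would have to go back to the raw definition $m^{i,k}_w = \Omega(i,k)\, e^{wL(-1)} m^{k,i}_{-w}\circ\tau$ and then argue that, after pre-composing with $m^{j,-}_z$, the resulting series (with its $e^{wL(-1)}$ factor and flipped variable) still lands in the same one-dimensional subspace of maps to $M_{i+j+k}[[z,w]][z^{-1},w^{-1},(z-w)^{-1}]$. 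That is exactly the ``bookkeeping of variable substitutions'' you say you want to avoid, and Lemma \ref{lem:weird-isom} does not do it for you: it identifies two expansions of a single element already known to lie in the common space, whereas here the issue is establishing that the swapped composite lies there in the first place.

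The paper sidesteps this entirely by inserting a fourth dummy input $u_0\in M_0=V$ and working in three formal variables (using the four-fold space $\cI^{i+j+k}_{i,j,k,0}$ and the embeddings of Lemma \ref{lem:octagons-of-embeddings}). With the extra slot, every $\Omega$-swap needed in the hexagon becomes a swap of the first two factors in a $(\,\cdot\,,\,\cdot\,)\,0$ or $(\,\cdot\,,\,\cdot\,)(\,\cdot\,0)$ configuration, which \emph{is} precisely the situation of Lemma \ref{lem:comm-span}. The price is that two extra associator factors $F(j,i+k,0)$ and $F(i,k+j,0)$ appear; these are then set to $1$ via Lemma \ref{lem:normalized}. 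So the missing idea in your plan is this fourth-input trick: add $M_0$ on the right, chase an octagon rather than a hexagon, and use Lemma \ref{lem:normalized} to collapse it back to a hexagon.
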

\begin{proof}
As in Lemma \ref{lem:pentagon}, for any $i,j,k \in A$, let $u_i \in M_i$, $u_j \in M_j$, $u_k \in M_k$ and $u_0 \in M_0 = V$ satisfy:
\[ m^{i,j+k}_z \circ (1 \otimes m^{j,k}_w) \circ (1 \otimes 1 \otimes m^{k,0}_t) (u_i \otimes u_j \otimes u_k \otimes u_0) \neq 0 \in M_{i+j+k}((z))((w))((t)). \]
For convenience, we write $M' = M_{i+j+k}[[z,w,t]][z^{-1}, w^{-1} t^{-1}, (z-w)^{-1}, (z-t)^{-1}, (w-t)^{-1}]$.  As before, by the existence of $\cI^{i+j+k}_{i,j,k,0}$, a satisfactory choice of elements $(u_i,u_j,u_k,u_0)$ exists, and the map $m^{i,j+k}_z \circ (1 \otimes m^{j,k}_w) \circ (1 \otimes 1 \otimes m^{k,0}_t)$ is the composite of a map to $M'$ followed by the injection $\iota_{t,w,z}$.  We let $u'$ be the image of $u_i \otimes u_j \otimes u_k \otimes u_0$ in $M'$ in this factorization.  

To prove the first equation, we follow the following diagram:
\[ \xymatrix{ & j((ki)0) & j((ik)0) \ar[l]_{\Omega(i,k)} & \\
(j(ki))0 \ar[ur]^{F(j,i+k,0)^{-1} = 1} & & & (j(ik))0 \ar[ul]_{F(j,i+k,0)^{-1} = 1} \\
((jk)i)0 \ar[u]^{F(j,k,i)^{-1}} & & & ((ji)k)0 \ar[u]_{F(j,i,k)^{-1}} \\
& (i(jk))0 \ar[ul]^{\Omega(i,j+k)} & ((ij)k)0 \ar[l]_{F(i,j,k)^{-1}} \ar[ur]_{\Omega(i,j)} } \]
to deduce statements about the composition of the corresponding intertwining operators.  The left side of the octagon is traversed as follows:
\begin{enumerate}
\item[((ij)k)0:] By Lemma \ref{lem:pentagon}, the composite $m^{i+j+k,0}_t \circ (m^{i+j,k}_{w-t} \otimes 1) \circ (m^{i,j}_{z-w} \otimes 1 \otimes 1)$ factors through $M'$, and takes $u_i \otimes u_j \otimes u_k \otimes u_0$ to $F(i,j,k)u'$.
\item[(i(jk))0:] The composite $m^{i+j+k,0}_t \circ (m^{i,j+k}_{z-t} \otimes 1) \circ (1 \otimes m^{j,k}_{w-t} \otimes 1)$ factors through $M'$, and takes $u_i \otimes u_j \otimes u_k \otimes u_0$ to $u'$.
\item[((jk)i)0:] By Lemma \ref{lem:comm-span}, the composite $m^{j+k+i,0}_z \circ (m^{j+k,i}_{t-z} \otimes 1) \circ (m^{j,k}_{w-t} \otimes 1 \otimes 1)$ factors through $M'$ and takes $u_i \otimes u_j \otimes u_k \otimes u_0$ to $\Omega(i,j+k) u'$.
\item[(j(ki))0:] The composite $m^{j+k+i,0}_t \circ (m^{j,k+i}_{w-z} \otimes 1) \circ (1 \otimes m^{k,i}_{t-z} \otimes 1)$ factors through $M'$, and takes $u_i \otimes u_j \otimes u_k \otimes u_0$ to $F(j,k,i)^{-1} \Omega(i,j+k) u'$.
\item[j((ki)0):] By Lemma \ref{lem:normalized}, the composite $m^{j,k+i}_{w-z} \circ (1 \otimes m^{k+i,0}_z) \circ (1 \otimes m^{k,i}_{t-z} \otimes 1)$ factors through $M'$, and takes $u_i \otimes u_j \otimes u_k \otimes u_0$ to $F(j,i+k,0)^{-1} F(j,k,i)^{-1} \Omega(i,j+k) u' = F(j,k,i)^{-1} \Omega(i,j+k) u'$.
\end{enumerate}
The right side of the octagon is traversed as follows:
\begin{enumerate}
\item[((ji)k)0:] The composite $m^{j+i+k,0}_t \circ (m^{j+i,k}_{z-t} \otimes 1) \circ (m^{j,i}_{w-z} \otimes 1 \otimes 1)$ factors through $M'$, and takes $u_i \otimes u_j \otimes u_k \otimes u_0$ to $\Omega(i,j) F(i,j,k)u'$.
\item[(j(ik))0:] The composite $m^{j+i+k,0}_t \circ (m^{j,i+k}_{w-t} \otimes 1) \circ (1 \otimes m^{i,k}_{z-t} \otimes 1)$ factors through $M'$, and takes $u_i \otimes u_j \otimes u_k \otimes u_0$ to $F(j,i,k)^{-1} \Omega(i,j) F(i,j,k) u'$.
\item[j((ik)0):] The composite $m^{j,i+k}_{w-t} \circ (1 \otimes m^{i+k,0}_t) \circ (1 \otimes m^{i,k}_{z-t} \otimes 1)$ factors through $M'$, and takes $u_i \otimes u_j \otimes u_k \otimes u_0$ to $F(j,i+k,0)^{-1} F(j,i,k)^{-1} \Omega(i,j) F(i,j,k) u' = F(j,i,k)^{-1} \Omega(i,j) F(i,j,k) u'$.
\item[j((ki)0):] The composite $m^{j,k+i}_{w-z} \circ (1 \otimes m^{k+i,0}_z) \circ (1 \otimes m^{k,i}_{t-z} \otimes 1)$ factors through $M'$, and takes $u_i \otimes u_j \otimes u_k \otimes u_0$ to $\Omega(i,k) F(j,i,k)^{-1} \Omega(i,j) F(i,j,k) u'$
\end{enumerate}
We see that $F(j,k,i)^{-1} \Omega(i,j+k) u' = \Omega(i,k) F(j,i,k)^{-1} \Omega(i,j) F(i,j,k) u'$, and because $u'$ is a nonzero element of a complex vector space, the first hexagon equation holds.

To prove the second equation, we follow the following diagram:
\[ \xymatrix{ & ((ki)j)0 & ((ik)j)0 \ar[l]_{\Omega(i,k)} & \\
(k(ij))0 \ar[ur]^{F(k,i,j)} & & & (i(kj))0 \ar[ul]_{F(i,k,j)} \\
((ij)k)0 \ar[u]^{\Omega(i+j,k)} & & & i((kj)0) \ar[u]_{F(i,k+j,0) = 1} \\
& (i(jk))0 \ar[ul]^{F(i,j,k)} & i((jk)0) \ar[l]_{F(i,j+k,0) = 1} \ar[ur]_{\Omega(j,k)} } \]
to deduce statements about the composition of the corresponding intertwining operators.  After omitting previous evaluations, the left side of the octagon is traversed as follows:
\begin{enumerate}
\item[((ij)k)0:] The composite $m^{i+j+k,0}_t \circ (m^{i+j,k}_{w-t} \otimes 1) \circ (m^{i,j}_{z-w} \otimes 1 \otimes 1)$ factors through $M'$ and takes $u_i \otimes u_j \otimes u_k \otimes u_0$ to $F(i,j,k) u'$.
\item[(k(ij))0:] The composite $m^{k+i+j,0}_w \circ (m^{k,i+j}_{t-w} \otimes 1) \circ (1 \otimes m^{i,j}_{z-w} \otimes 1)$ factors through $M'$, and takes $u_i \otimes u_j \otimes u_k \otimes u_0$ to $\Omega(i+j,k) F(i,j,k) u'$.
\item[((ki)j)0:] The composite $m^{k+i+j,0}_w \circ (m^{k+i,j}_{z-w} \otimes 1) \circ (m^{k,i}_{t-z} \otimes 1 \otimes 1)$ factors through $M'$ and takes $u_i \otimes u_j \otimes u_k \otimes u_0$ to $F(k,i,j)\Omega(i+j,k) F(i,j,k) u'$.
\end{enumerate}
The right side of the octagon is traversed as follows:
\begin{enumerate}
\item[i((kj)0):] The composite $m^{i,k+j}_{z-w} \circ (1 \otimes m^{k+j,0}_w) \circ (1 \otimes m^{k,j}_{t-w} \otimes 1)$ factors through $M'$, and takes $u_i \otimes u_j \otimes u_k \otimes u_0$ to $\Omega(j,k) u'$.
\item[(i(kj))0:] The composite $m^{i+k+j,0}_w \circ (m^{i,k+j}_{z-w} \otimes 1) \circ (1 \otimes m^{k,j}_{t-w} \otimes 1)$ factors through $M'$, and takes $u_i \otimes u_j \otimes u_k \otimes u_0$ to $F(i,k+j,0)\Omega(j,k) u' = \Omega(j,k) u'$.
\item[((ik)j)0:] The composite $m^{i+k+j,0}_w \circ (m^{i+k,j}_{t-w} \otimes 1) \circ (m^{i,k}_{z-t} \otimes 1 \otimes 1)$ factors through $M'$ and takes $u_i \otimes u_j \otimes u_k \otimes u_0$ to $F(i,k,j) \Omega(j,k) u'$.
\item[((ki)j)0:] The composite $m^{k+i+j,0}_w \circ (m^{k+i,j}_{z-w} \otimes 1) \circ (m^{k,i}_{t-z} \otimes 1 \otimes 1)$ factors through $M'$ and takes $u_i \otimes u_j \otimes u_k \otimes u_0$ to $\Omega(i,k)F(i,k,j) \Omega(j,k) u'$.
\end{enumerate}
We see that $F(k,i,j)\Omega(i+j,k) F(i,j,k) u' = \Omega(i,k)F(i,k,j) \Omega(j,k) u'$, and because $u'$ is a nonzero element of a complex vector space, the second hexagon equation holds.
\end{proof}

\begin{prop} \label{prop:get-cocycle}
Let $V$ be a $\cT$-vertex algebra, let $A$ be an abelian group, and let $\{ M_i \}_{i \in A}$ be a set of $V$-modules in $\cT$, such that $M_0 = V$.  Given a one dimensional $\cT$-commutativity datum, and any normalized choice of nonzero elements $\{ m^{i,j}_z \in \cI_{i,j}^{i+j} \}_{i,j \in A}$, the pair $(F, \Omega)$ derived from $\{ m^{i,j} \}$ forms a normalized abelian 3-cocycle for $A$ with coefficients in $\bC^\times$.
\end{prop}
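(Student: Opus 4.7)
The plan is to assemble the earlier lemmas and then verify the one piece of information not yet recorded, namely the normalization of $\Omega$. Unpacking the definition, an abelian 3-cocycle $(F,\Omega)$ requires three identities: the pentagon relation on $F$, and two hexagon relations mixing $F$ and $\Omega$; normalization further requires $F(0,i,j)=F(i,0,j)=F(i,j,0)=1$ and $\Omega(0,i)=\Omega(i,0)=1$. So the proof is essentially a bookkeeping exercise.

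First I would invoke Lemma \ref{lem:pentagon} to conclude that $F$ satisfies the pentagon identity, and Lemma \ref{lem:hexagon} to conclude that $(F,\Omega)$ satisfies both hexagon identities. This immediately shows that $(F,\Omega)$ is an abelian 3-cocycle in the sense of the Eilenberg--Mac Lane complex.

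Next I would apply Lemma \ref{lem:normalized} to obtain the three vanishing conditions $F(0,i,j)=F(i,0,j)=F(i,j,0)=1$, using the fact that the chosen $\{m^{i,j}_z\}$ are normalized (so $m^{0,i}_z = act^i_z$ and $m^{i,0}_z = act^{i,*}_z$).

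The remaining step is the normalization of $\Omega$, which is the only content not already spelled out. For this I would argue directly from the defining equation
\[ m^{i,j}_z = \Omega(i,j)\, e^{zL(-1)} m^{j,i}_{-z} \circ \tau_{(12)} \]
together with the skew-symmetry description of $act^{i,*}_z$ from Lemma \ref{lem:skew-sym}. Specializing to $i=0$ gives $act^j_z(v\otimes x) = \Omega(0,j)\, e^{zL(-1)} act^{j,*}_{-z}(x\otimes v)$; but by Lemma \ref{lem:skew-sym} applied with $z\mapsto -z$ we have $act^{j,*}_{-z}(x\otimes v) = e^{-zL(-1)} act^j_z(v\otimes x)$, so the right hand side equals $\Omega(0,j) act^j_z(v\otimes x)$. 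Choosing $v,x$ with $act^j_z(v\otimes x)\neq 0$ forces $\Omega(0,j)=1$. The symmetric calculation with $j=0$ (starting from $m^{i,0}_z = act^{i,*}_z$ and applying skew-symmetry in the same way) yields $\Omega(i,0)=1$.

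No step here looks like a serious obstacle: the pentagon and hexagon relations were established in Lemmas \ref{lem:pentagon} and \ref{lem:hexagon} by tracking composites of four intertwining operators through the diagram in Lemma \ref{lem:star-of-embeddings} (and its analogues in Lemma \ref{lem:octagons-of-embeddings}), and the normalization facts are short computations using only skew-symmetry and the previously established normalization of $F$. The mild subtlety, if any, is being careful that the $L(-1)$-exponentials on the two sides of the defining equation for $\Omega(0,j)$ cancel exactly; once one appeals to Lemma \ref{lem:skew-sym} with the correct sign of $z$, this cancellation is automatic.
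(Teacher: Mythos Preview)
Your proof is correct and follows the same outline as the paper's: cite Lemma \ref{lem:pentagon} for the pentagon identity and Lemma \ref{lem:hexagon} for the two hexagon identities, which together give the abelian 3-cocycle condition. In fact your argument is more complete than the paper's own proof, which stops there and does not explicitly address normalization. Your invocation of Lemma \ref{lem:normalized} for the normalization of $F$, and your direct computation of $\Omega(0,j)=\Omega(i,0)=1$ from the defining relation and the skew-symmetry formula for $act^{i,*}_z$, are both correct and fill in what the paper leaves implicit. (An alternative route to the $\Omega$ normalization, which you might enjoy: once $F$ is normalized, substitute $j=k=0$ in the first hexagon identity to get $\Omega(i,0)=\Omega(i,0)^2$, and $i=j=0$ in the second to get $\Omega(0,k)=\Omega(0,k)^2$.)
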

\begin{proof}
This follows from the fact that the abelian 3-cocycle condition is given by the pentagon condition (proved in Lemma \ref{lem:pentagon}), and the hexagon condition (proved in Lemma \ref{lem:hexagon}).
\end{proof}

\begin{rem}
Before we can state our main obstruction results, we have to deal with the possibility of finiteness conditions obstructing the formation of $\cT$-vertex algebras.  In particular, vertex operator algebras satisfy the condition that eigenvalues of $L(0)$ have finite multiplicity, and this may not be satisfied when $A$ is infinite.  The standard example is given by vertex algebras of indefinite lattices, such as the fake monster vertex algebra (introduced in \cite{B86} without a name), which are built from modules of a Heisenberg vertex operator algebra, but are not vertex operator algebras because of this spectral condition.
\end{rem}

\begin{defn} \label{defn:T-compatible}
We say that a collection of modules in $\cT$ is $\cT$-compatible if their direct sum is a module in $\cT$.
\end{defn}

\begin{prop} \label{prop:criterion-for-existence}
Let $V$ be a $\cT$-vertex algebra, let $A$ be an abelian group, and let $\{ M_i \}_{i \in A}$ be a $\cT$-compatible set of $V$-modules in $\cT$, such that $M_0 = V$.  Suppose we are given a one dimensional $\cT$-commutativity datum, and a normalized choice of nonzero elements $\{ m^{i,j}_z \in \cI_{i,j}^{i+j} \}_{i,j \in A}$.  Then we define a vector space $W = \bigoplus_{i \in A} M_i$, a derivation $L(-1)^W: W \to W$ by its restriction to each $M_i$, and a multiplication map $m^W_z: W \otimes W \to W((z))$ as the sum of its restrictions $m^{i,j}_z$.  The tuple $(W, \unit_V, L(-1)^W,m^W_z)$ is a $\cT$-vertex algebra if and only if the abelian 3-cocycle $(F, \Omega)$ is identically 1.
\end{prop}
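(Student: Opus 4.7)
The plan is to verify each $\cT$-vertex algebra axiom for $W = \bigoplus_{i \in A} M_i$ one graded summand at a time. The $\cT$-compatibility hypothesis ensures that the direct sum of the maps $m^{i,j}_z$ really does yield a $\cT$-morphism $m^W_z : W \otimes W \to W((z))$ (so that, for instance, no spurious infinite sums arise in a given eigenspace), and that $W$ itself sits in $\cT$.

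I would dispose of the straightforward axioms first. The vacuum axioms reduce directly to the normalization conventions: $m^{0,i}_z = act_z$ gives $m^W_z(\unit \otimes v) = v z^0$, while $m^{i,0}_z = act^{i,*}_z$ together with Lemmas \ref{lem:skew-sym} and \ref{lem:vafacts}(1) gives $m^W_z(v \otimes \unit) = e^{zL(-1)} v \in v + zM_i[[z]]$. Translation covariance, and any further compatibility conditions encoded in the type $\cT$ (equivariance under $L(i)$ for $i \geq 0$, presence of a conformal vector, or a $G$-action), are inherited summand-by-summand from the corresponding conditions built into the definition of $\cT$-intertwining operator.

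The substantive axiom is locality. I would invoke the standard fact that, in the presence of translation covariance and the vacuum axiom, locality is equivalent to the conjunction of the associativity diagram of Lemma \ref{lem:vafacts}(2) and the skew-symmetry relation of Lemma \ref{lem:vafacts}(1). By the defining equation of $F$, the associativity identity $m^{i,j+k}_z \circ (1 \otimes m^{j,k}_w) = m^{i+j,k}_w \circ (m^{i,j}_{z-w} \otimes 1)$ holds on $M_i \otimes M_j \otimes M_k$ precisely when $F(i,j,k) = 1$; by the defining equation of $\Omega$, the skew-symmetry identity $m^{i,j}_z = e^{zL(-1)} m^{j,i}_{-z} \circ \tau_{(12)}$ holds on $M_i \otimes M_j$ precisely when $\Omega(i,j) = 1$. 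Both directions of the proposition then follow at once.

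The step requiring the most care is the forward implication ``associativity plus skew-symmetry $\Rightarrow$ locality'' on each triple $(i,j,k)$. For this I would combine the associativity identity for $(i,j,k)$ with the associativity identity for the permuted triple $(j,i,k)$ (the latter factoring through $\tau_{(12)}^*\cI^{j+i+k}_{j,i,k} = \cI^{i+j+k}_{i,j,k}$ by Lemma \ref{lem:comm-span}), using skew-symmetry to rewrite $m^{i,j}_{z-w}$ as $e^{(z-w)L(-1)} m^{j,i}_{w-z} \circ \tau_{(12)}$, and applying Lemma \ref{lem:weird-isom} to transfer the $M((w))((z-w))$ expansion to the $M((z))((z-w))$ expansion. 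The conclusion is that the two iterated compositions arising in the locality diagram factor through a common element of the intermediate space $W[[z,w]][z^{-1},w^{-1},(z-w)^{-1}]$, which is exactly the locality axiom. The reverse implication is essentially trivial: given a vertex algebra structure on $W$, restricting locality and associativity to graded pieces forces $F(i,j,k) = 1$ and $\Omega(i,j) = 1$ immediately from the definitions of $F$ and $\Omega$.
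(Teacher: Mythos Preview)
Your proposal is correct and follows essentially the same approach as the paper: reduce locality to associativity plus skew-symmetry, identify these with $F\equiv 1$ and $\Omega\equiv 1$ respectively, and inherit the remaining $\cT$-structure from the intertwining-operator axioms. You supply more detail than the paper on the ``associativity $+$ skew-symmetry $\Rightarrow$ locality'' step (the paper simply asserts it), while the paper is more explicit than you in enumerating how each specific choice of $\cT$ (weighted, M\"obius, conformal, VOA, $G$-equivariant) is handled.
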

\begin{proof}
Since $F$ measures the failure of associativity, the associativity property holds if and only if $F$ is identically 1.  $\Omega$ is identically 1 if and only if skew-symmetry holds.  Since locality is equivalent to the combination of associativity and skew-symmetry, we obtain a vertex algebra.  We now need to show that we obtain a $\cT$-vertex algebra for the various values of $\cT$ under consideration.
\begin{enumerate}
\item The unrestricted setting: We are already done.
\item Worldsheet symmetry: If $V$ is weighted (resp. M\"obius, quasi-conformal), our modules come with a collection of operators $\{L(i)\}$, and the compatibility condition for module structures and intertwining operators is equivalent for the vertex algebra to have the new worldsheet symmetry.  If $V$ is conformal, the conformal condition on modules implies the conformal vector induces a Virasoro action on $W$.
\item VOAs: Our assumption that $\{ M_i \}_{i \in A}$ is $\cT$-compatible implies $W$ has spectrum bounded below and finite dimensional eigenspaces.
\item $G$-equivariant versions: The compatibility condition implies $g m_z^{i,j}(u_i \otimes u_j) = m_z^{i,j}(g u_i \otimes g u_j)$ for all $u_i \in M_i$ and $u_j \in M_j$.  This is precisely the condition for $g$ to be a vertex algebra automorphism of $W$.  For the action of $G$ to respect additional worldsheet symmetry on $W$, it is necessary and sufficient that $g L(k) = L(k) g$ for all relevant $k$, and this one of the conditions for a $G$-action on $M_i$ to be compatible with the $V$-module structure.
\end{enumerate}
\end{proof}

\begin{defn} \label{defn:twist-action}
Let $V$ be a $\cT$-vertex algebra, let $A$ be an abelian group, and let $\{ M_i \}_{i \in A}$ be a set of $V$-modules in $\cT$, such that $M_0 = V$.  Suppose we are given spaces $\cI_{i,j}^{i+j}$ of $\cT$-intertwining operators  $\{ m^{i,j}_z: M_i \otimes M_j \to M_{i+j}((z)) \}_{i,j \in A}$.  We define a ``$\lambda$-twist'' action of the abelian 2-cochain group $C^2_{ab}(A,\bC^\times)$ on $\bigoplus \cI_{i,j}^{i+j}$ by sending any cochain $(i,j) \mapsto \lambda_{i,j} \in \bC^\times$ to the rescaling map $\{ m^{i,j}(z) \} \mapsto \{ \lambda_{i,j} m^{i,j}(z) \}$.
\end{defn}

\begin{thm} \label{thm:obstruction}
Let $V$ be a $\cT$-vertex algebra, let $A$ be an abelian group, and let $\{ M_i \}_{i \in A}$ be a set of $V$-modules in $\cT$, such that $M_0 = V$.  Given a one dimensional $\cT$-commutativity datum attached to $V$ and $\{M_i\}_{i \in A}$, and any normalized choice of nonzero elements $\{ m^{i,j}_z \in \cI_{i,j}^{i+j} \}_{i,j \in A}$, the following hold:
\begin{enumerate}
\item The action of $C^2_{ab}(A,\bC^\times)$ on the total intertwining operator space $\bigoplus \cI_{i,j}^{i+j}$ given in Definition \ref{defn:twist-action} induces the canonical translation action $(\{\lambda_{i,j}\}, (F, \Omega))\mapsto (d\lambda \cdot (F, \Omega))$ on the group of abelian 3-cocycles by coboundaries.  This action is transitive on representatives of any fixed cohomology class in $H^3_{ab}(A,\bC^\times)$, with stabilizer given by the group of abelian 2-cocycles $\{\lambda_{i,j} \}$.
\item The function $A \to \pm 1$ defined by $i \mapsto \Omega(i,i)$ is a quadratic form invariant under $\lambda$-twist by 2-cochains.  In particular, the abelian cohomology class of $(F, \Omega)$ is canonically attached to the commutativity datum.
\item If $\{M_i\}$ is $\cT$-compatible, then the quadratic form $i \mapsto \Omega(i,i)$ is identically one if and only if there exists a normalized 2-cochain $\lambda$ such that $\{ \lambda_{i,j} m^{i,j}_z \}$ describe an $A$-graded $\cT$-vertex algebra structure on $\bigoplus_{i \in A} M_i$.
\item The $A$-graded $\cT$-vertex algebra structure on $\bigoplus_{i \in A} M_i$ is unique up to isomorphism if it exists.
\end{enumerate}
\end{thm}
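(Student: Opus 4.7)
The plan is to show that the four parts follow almost entirely from the lemmas and propositions already established, combined with two structural facts: the triviality of $H^2_{ab}(A,\bC^\times)$ (Lemma \ref{lem:degree-2-abelian-cohomology}) and the trace bijection from $H^4(K(A,2),\bC^\times)$ to quadratic forms (Lemma \ref{lem:cocycle-trace}).

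For (1), I would unpack the definitions directly. Replacing each $m^{i,j}_z$ with $\lambda_{i,j} m^{i,j}_z$ and inserting into the defining relations
\[ m^{i,j+k}_z \circ (1 \otimes m^{j,k}_w) = F(i,j,k) \, m^{i+j,k}_w \circ (m^{i,j}_{z-w} \otimes 1), \qquad m^{i,j}_z = \Omega(i,j) \, e^{zL(-1)} m^{j,i}_{-z} \circ \tau_{(12)}, \]
gives $F_{\mathrm{new}}(i,j,k) = F(i,j,k) \cdot \frac{\lambda_{j,k}\lambda_{i,j+k}}{\lambda_{i+j,k}\lambda_{i,j}}$ and $\Omega_{\mathrm{new}}(i,j) = \Omega(i,j) \cdot \frac{\lambda_{i,j}}{\lambda_{j,i}}$, which is precisely translation by $d^2\lambda$ in the Mac Lane cochain complex. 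Transitivity on representatives of a fixed class in $H^3_{ab}(A,\bC^\times)$ and the description of the stabilizer as the abelian 2-cocycles are then formal.

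For (2), the commutativity datum axiom implies that the skew-symmetrized operator $m^{i,i,*}_z$ lies in the one-dimensional space $\cI^{2i}_{i,i}$ containing $m^{i,i}_z$. Lemma \ref{lem:intertwining-symmetrizer} then forces the proportionality constant to be $\pm 1$, so $\Omega(i,i) \in \{\pm 1\}$ by definition of $\Omega$. The quadratic form property is Lemma \ref{lem:cocycle-trace}, and invariance under $\lambda$-twist is immediate from (1) since $\lambda_{i,i}/\lambda_{i,i} = 1$; combined with the bijection in Lemma \ref{lem:cocycle-trace}, this makes the abelian cohomology class of $(F,\Omega)$ canonical.

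For (3), if $\Omega(i,i) \equiv 1$, then by Lemma \ref{lem:cocycle-trace} the class of $(F,\Omega)$ in $H^3_{ab}(A,\bC^\times)$ is trivial, so $(F,\Omega) = d^2(\mu^{-1})$ for some 2-cochain $\mu$, which may be taken normalized by Lemma \ref{lem:normalized-differential}. Twisting by $\mu$ yields trivial $(F_{\mathrm{new}}, \Omega_{\mathrm{new}})$, and the $\cT$-compatibility hypothesis together with Proposition \ref{prop:criterion-for-existence} then produces the $A$-graded $\cT$-vertex algebra structure. Conversely, existence of such a structure means some $\lambda$-twist has trivial $(F,\Omega)$, and then the trace is identically $1$ — and by invariance in (2), so is the original trace.

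For (4), suppose two normalized twists $\lambda$ and $\lambda'$ both yield $\cT$-vertex algebra structures on $\bigoplus_{i \in A} M_i$. Their ratio $\nu_{i,j} = \lambda'_{i,j}/\lambda_{i,j}$ satisfies $d^2\nu = 1$, i.e., is an abelian 2-cocycle. By Lemma \ref{lem:degree-2-abelian-cohomology}, $\nu$ is a coboundary, so $\nu_{i,j} = \phi(j)\phi(i)/\phi(i+j)$ for some $\phi: A \to \bC^\times$ with $\phi(0) = 1$. The linear map $\Psi: \bigoplus_i M_i \to \bigoplus_i M_i$ defined by $\Psi|_{M_i} = \phi(i) \cdot \mathrm{id}_{M_i}$ fixes the vacuum and intertwines the two multiplication maps, so it is an isomorphism of $A$-graded $\cT$-vertex algebras (the additional worldsheet or $G$-equivariance conditions are preserved because $\Psi$ acts as a scalar on each homogeneous component). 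The main obstacle in this bookkeeping is part (3), which is really where the work of Lemma \ref{lem:cocycle-trace} enters; the remaining parts are formal consequences of the established machinery once the $\lambda$-twist is identified with translation by coboundaries.
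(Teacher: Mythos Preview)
Your proof is correct and follows essentially the same route as the paper's own proof: the same lemmas (\ref{lem:cocycle-trace}, \ref{lem:normalized-differential}, \ref{lem:degree-2-abelian-cohomology}) and Proposition \ref{prop:criterion-for-existence} are invoked at the same places, and the uniqueness argument via abelian 2-cocycles being coboundaries is identical. Your treatment is in fact slightly more explicit than the paper's in two spots: you compute the effect of the $\lambda$-twist on $(F,\Omega)$ directly rather than citing \cite{DL93} and \cite{BK06}, and you justify $\Omega(i,i)\in\{\pm 1\}$ via Lemma \ref{lem:intertwining-symmetrizer}, a point the paper's proof leaves implicit.
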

\begin{proof}
For the first claim, it suffices to verify that the cocycle $(F, \Omega)$ is shifted by the abelian differential of $\lambda$.  This is essentially the content of \cite{DL93} Remark 12.23 (for finite order cocycles), and \cite{BK06} section 3.4 (for the case of trivial $F$), and the arguments given in the literature extend without substantial change.

In the second statement, the invariance of the abelian cohomology class follows from the fact that normalized 2-cochains act simply transitively on the set of normalized choices of intertwining operators.  Both the fact that $i \to \Omega(i,i)$ is quadratic and the fact that it is invariant follow from Lemma \ref{lem:cocycle-trace}.

For the third statement, Proposition \ref{prop:criterion-for-existence} implies it is necessary and sufficient that we obtain a trivial abelian 3-cocycle.  By Lemma \ref{lem:normalized-differential}, the abelian differential defines a transitive action of normalized 2-cochains on the normalized abelian 3-cocycles in the abelian cohomology class of $(F, \Omega)$.  In other words there exists such $\lambda$ if and only if the pair $(F, \Omega)$ is an abelian 3-cocycle in the zero cohomology class.  By Lemma \ref{lem:cocycle-trace}, the abelian cohomology class is determined by its trace, and is trivial if and only if $\Omega(i,i) = 1$ for all $i$.

For the last claim, suppose a $\cT$-vertex algebra compatible with the commutativity datum exists.  Then a $\lambda$-twist yields a $\cT$-vertex algebra precisely when $\lambda$ is an abelian 2-cocycle, i.e., we have an action of $Z^2_{ab}(A,\bC^\times) = Z^3(K(A,2),\bC^\times)$ on the set of isomorphism classes of $A$-graded $\cT$-vertex algebras compatible with the commutativity datum. Since $\lambda$-twisting is transitive on the set of choices of multiplication rules, the cocycle group acts transitively on the set of isomorphism classes.  Furthermore, this action factors through $H^2_{ab}(A,\bC^\times)$, because twists by 2-coboundaries induce isomorphisms by abelian 1-cochains given by multiples of identity $c_i \cdot Id: M_i \to M_i$.  By Lemma \ref{lem:degree-2-abelian-cohomology}, $H^2_{ab}(A,\bC^\times)$ is trivial.  Thus, the set of isomorphism classes has a transitive action of a trivial group, and hence has one element.
\end{proof}

\begin{rem}
The uniqueness claim is similar to the argument in the proof of Proposition 5.3 in \cite{DM02}.  However, the argument there needs some minor repair: all instances of ``2-cocycle'' and ``two cocycle'' should be changed to ``abelian 2-cocycle''.  Indeed, $H^2(A,\bC^\times)$ is nontrivial for any non-cyclic finite abelian group $A$.
\end{rem}

\subsection{Evenness}

The condition that $\Omega(i,i) = 1$ for all $i \in A$ that appears in the third claim of Theorem \ref{thm:obstruction} is not automatically satisfied.  Indeed, any supercommutative ring $R$ whose odd part has nonzero three-fold products can be written as follows: We set $A = \bZ/2\bZ$, $V = M_0 = R_{even}$, $M_1 = R_{odd}$, $L(-1)$ is the zero map, and $m^{i,j}$ are the restrictions of multiplication on $R$.  We find that the spans of $\{ m^{i,j} \}$ form a one-dimensional $\cT$-commutativity datum, but the abelian 3-cocycle is nontrivial: $F$ is the zero map, but $\Omega(i,j) = (-1)^{ij}$.  This counterexample works in the weighted, M\"obius, quasi-conformal, and conformal cases, because we may define a conformal element by setting $\omega = 0$, causing $L(i)$ to act by zero for all $i \in \bZ$.  We explore this condition further here.

\begin{defn}
Let $V$ be a $\cT$-vertex algebra, and let $M_1$ and $M_2$ be $V$-modules in $\cT$.  A $\cT$-intertwining operator $I_z: M_1 \otimes M_1 \to M_2((z))$ is called even if one of the following conditions holds:
\begin{enumerate}
\item For any $u \in M_1$, $I_z(u \otimes u) = 0$, and for any $u, v \in M_1$ such that $I_z(u \otimes v) \neq 0$, we have $I_z(u \otimes v) \in z^n M_2[[z]]\setminus z^{n+1}M_2[[z]]$, where $n$ is odd.
\item There exists $u \in M_1$ such that $I_z(u \otimes u) \in z^n M_2[[z]]\setminus z^{n+1}M_2[[z]]$, where $n$ is even.
\end{enumerate}
$I_z$ is called odd if one of the following conditions holds:
\begin{enumerate}
\item For any $u \in M_1$, $I_z(u \otimes u) = 0$, and for any $u, v \in M_1$ such that $I_z(u \otimes v) \neq 0$, we have $I_z(u \otimes v) \in z^n M_2[[z]]\setminus z^{n+1}M_2[[z]]$, where $n$ is even.
\item There exists $u \in M_1$ such that $I_z(u \otimes u) \in z^n M_2[[z]]\setminus z^{n+1}M_2[[z]]$, where $n$ is odd.
\end{enumerate}
A commutativity datum is even if for all $i \in A$, any nonzero $m^{i,i}_z \in \cI_{i,i}^{2i}$ is even.
\end{defn}

\begin{rem}
By Proposition 11.9 in \cite{DL93}, if $M_1$ is irreducible, then $I_z(u \otimes u) \neq 0$ for any nonzero $u \in M_1$ and any nonzero intertwining operator $I_z: M_1 \otimes M_1 \to M_2((z))$.  In this case, the term ``evenness'' is reasonable, since it refers to the exponent of the lowest order nonzero term in $I_z(u \otimes u)$.
\end{rem}

\begin{lem} \label{lem:evenness}
Let $V$ be a $\cT$-vertex algebra, let $M_1$ and $M_2$ be $V$-modules in $\cT$, and let $I_z: M_1 \otimes M_1 \to M_2((z))$ be a nonzero $\cT$-intertwining operator.  If $I_z$ is proportional to $I_z^*$, then $I_z$ is either even or odd, and rescaling an intertwining operator does not change evenness or oddness.  In particular, if we are given a one dimensional $\cT$-commutativity datum, for any $i \in A$, any nonzero $m^{i,i} \in \cI_{i,i}^{2i}$ is either even or odd.  Furthermore, for all $i \in A$, $m^{i,i}_z$ is even if and only if $\Omega(i,i) = 1$, and $m^{i,i}_z$ is odd if and only if $\Omega(i,i) = -1$.  In particular, the commutativity datum is even if and only if $\Omega(i,i) = 1$ for all $i \in A$.
\end{lem}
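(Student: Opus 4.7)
The plan is to invoke Lemma \ref{lem:intertwining-symmetrizer}, which reduces the proportionality hypothesis to the dichotomy $I_z = \pm I_z^*$, and then to unwind the definition of $\Omega$ so that the $\pm 1$ there is recognized as $\Omega(i,i)$. From the dichotomy I would split into two cases by the sign, and within each case further split according to whether $I_z(u \otimes u)$ vanishes for every $u \in M_1$ or not; this is exactly the same bifurcation used in the two-clause definitions of ``even'' and ``odd''.

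First I would handle the case $I_z = +I_z^*$. If some $I_z(u \otimes u) = g_r z^r + \cdots$ is nonzero, then $I_z^*(u \otimes u) = e^{zL(-1)} I_{-z}(u \otimes u)$ has lowest-order term $(-1)^r g_r z^r$, since $e^{zL(-1)}$ contributes only strictly higher-order corrections. The equality then forces $r$ even, which is clause (2) of evenness. If instead $I_z(u \otimes u) = 0$ for all $u$, then bilinearity gives $I_z(u \otimes v) = -I_z(v \otimes u)$, so for any $u,v$ with $I_z(u \otimes v) = g_r z^r + \cdots$ nonzero, I compute $I_z^*(u \otimes v) = e^{zL(-1)} I_{-z}(v \otimes u) = -e^{zL(-1)} I_{-z}(u \otimes v)$, whose lowest-order term is $(-1)^{r+1} g_r z^r$. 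The equality now forces $r$ odd, which is clause (1) of evenness. A parallel computation in the case $I_z = -I_z^*$ flips each parity and reproduces the two clauses of the definition of oddness. Rescaling $I_z$ by a nonzero scalar does not affect either the vanishing locus of $I_z(u \otimes u)$ or the degree of the lowest-order nonzero coefficient of $I_z(u \otimes v)$, so evenness and oddness depend only on the line spanned by $I_z$.

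For the second half, since $\cI_{i,i}^{2i}$ is one-dimensional and contains $m_z^{i,i,*}$ by the defining property of a commutativity datum, any nonzero $m_z^{i,i}$ is proportional to $m_z^{i,i,*}$ and the first half applies, giving the dichotomy. Directly from the definition of $\Omega$ I read off
\[
m_z^{i,i}(u \otimes v) = \Omega(i,i)\, e^{zL(-1)} m_{-z}^{i,i}(v \otimes u) = \Omega(i,i)\, m_z^{i,i,*}(u \otimes v),
\]
so the $\pm 1$ of the general statement is exactly $\Omega(i,i)$. Matching the signs against the analysis above yields $\Omega(i,i) = 1$ iff $m_z^{i,i}$ is even and $\Omega(i,i) = -1$ iff $m_z^{i,i}$ is odd, and the final ``in particular'' is then tautological.

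The only real obstacle is the careful sign bookkeeping across the four sub-cases; once Lemma \ref{lem:intertwining-symmetrizer} is in hand and the definition of $\Omega$ is unwound, everything reduces to tracking the parity of $r$ against a single sign in each branch.
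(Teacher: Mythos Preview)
Your proposal is correct and follows essentially the same route as the paper. The only organizational difference is that the paper splits first on alternating versus non-alternating and carries the proportionality constant (written there as $\Omega(1,1)$, really $\Omega(i,i)$) through each branch, whereas you first invoke Lemma \ref{lem:intertwining-symmetrizer} to pin the constant to $\pm 1$ and then split on the sign; the four resulting sub-cases and their lowest-order-term comparisons are identical.
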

\begin{proof}
We apply the technique used in the proof of Lemma \ref{lem:intertwining-symmetrizer}.

Alternating case: If $I_z(u \otimes u) = 0$ for all $u \in M_1$, then bilinearity implies that for any $u, v \in M_1$, $I_z(u \otimes v) = -I_z(v \otimes u)$.  Since $I_z$ is nonzero, there exist $u,v \in M_1$ such that $I_z(u \otimes v) \in z^n M_2[[z]]\setminus z^{n+1}M_2[[z]]$ for some integer $n$, so we write $I_z(u \otimes v) = g_n z^n + g_{n+1} z^{n+1} + \cdots$ with $g_n$ nonzero.  Then by our assumption about $I_z^*$:
\[ \begin{aligned}
g_n z^n + \cdots &= I_z(u \otimes v) \\
&= \Omega(1,1) e^{zL(-1)}I^*_{-z}(v \otimes u) \\
&= -\Omega(1,1) e^{zL(-1)} g_n (-z)^n + \cdots \\
&= (-1)^{n+1} \Omega(1,1) g_n z^n + \cdots
\end{aligned}\]
If $n$ is odd (i.e., if $I_z$ is even), then $g_n = \Omega(1,1)g_n$, and $\Omega(1,1) = 1$.  If $n$ is even (i.e., if $I_z$ is odd), then $g_n = -\Omega(1,1)g_n$, and $\Omega(1,1) = -1$.

Non-alternating case: If there exists $u \in M_1$ such that $I_z(u \otimes u) \neq 0$, then $I_z(u \otimes u) \in z^n M_2[[z]]\setminus z^{n+1}M_2[[z]]$ for some integer $n$.  We write $I_z(u \otimes u) = g_n z^n + g_{n+1} z^{n+1} + \cdots$ with $g_n$ nonzero.  Then by our assumption about $I_z^*$:
\[ \begin{aligned}
g_n z^n + \cdots &= I_z(u \otimes u) \\
&= \Omega(1,1) e^{zL(-1)}I^*_{-z}(u \otimes u) \\
&= \Omega(1,1) e^{zL(-1)} g_n (-z)^n + \cdots \\
&= (-1)^n \Omega(1,1) g_n z^n + \cdots
\end{aligned}\]
If $n$ (hence $I_z$) is even, then $g_n = \Omega(1,1)g_n$, and $\Omega(1,1) = 1$.  If $n$ (hence $I_z$) is odd, then $g_n = -\Omega(1,1)g_n$, and $\Omega(1,1) = -1$.

The corresponding statements for commutativity data follow in a straightforward way.
\end{proof}

\begin{lem}
Let $V' \subset V$ be an embedding of vertex algebras, let $M_1$ and $M_2$ be $V$-modules in $\cT$, and let $I_z: M_1 \otimes M_1 \to M_2((z))$ be an intertwining operator in $\cT$ proportional to $I_z^*$.  If there is a $V'$-submodule $M'_1 \subset M_1$ in $\cT$ such that the restriction of $I_z$ to $M'_1 \otimes M'_1$ is nonzero, then $I_z$ is even if and only if its restriction is.
\end{lem}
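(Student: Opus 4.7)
The plan is to reduce this directly to Lemma~\ref{lem:evenness} by showing that the proportionality constant $\Omega$ between $I_z$ and $I_z^*$ is the same as the proportionality constant between the restricted operator and its skew-symmetrized version. Since Lemma~\ref{lem:evenness} identifies evenness with $\Omega = 1$ and oddness with $\Omega = -1$, the equivalence will then be immediate.

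First I would observe that the restriction $I_z|_{M'_1 \otimes M'_1}$ takes values in $M_2((z))$ and, because $M'_1$ is a $V'$-submodule of $M_1$, is a $\cT$-intertwining operator among $V'$-modules (where $M_2$ is regarded as a $V'$-module via the inclusion $V' \subset V$). Next I would note that the skew-symmetrization construction $I_z^*(v\otimes u) = e^{zL(-1)^{M_2}}I_{-z}(u\otimes v)$ depends only on the translation endomorphism $L(-1)^{M_2}$, which is the same whether $M_2$ is viewed as a $V$-module or a $V'$-module. Consequently,
\[
\bigl(I_z|_{M'_1 \otimes M'_1}\bigr)^{\!*} \;=\; I_z^*\big|_{M'_1 \otimes M'_1}.
\]

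Now let $\Omega \in \bC^\times$ be the scalar such that $I_z = \Omega \cdot I_z^*$. Restricting this identity to $M'_1 \otimes M'_1$ and applying the previous display yields
\[
I_z\big|_{M'_1 \otimes M'_1} \;=\; \Omega \cdot \bigl(I_z|_{M'_1 \otimes M'_1}\bigr)^{\!*},
\]
with the \emph{same} constant $\Omega$. Since $I_z$ is nonzero (it is proportional to a nonzero operator and the restriction is nonzero), Lemma~\ref{lem:evenness} applies to $I_z$ and gives $\Omega = 1$ iff $I_z$ is even, $\Omega = -1$ iff $I_z$ is odd. Applying the same lemma to the nonzero restriction $I_z|_{M'_1 \otimes M'_1}$ as a $V'$-intertwining operator gives $\Omega = 1$ iff the restriction is even and $\Omega = -1$ iff it is odd. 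Comparing the two yields the claim.

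The only potentially delicate point is the verification that $\cT$-structure behaves well under the inclusion $V' \subset V$ — in particular, that a $V'$-submodule of a $V$-module in $\cT$ is again in $\cT$, and that $I_z|_{M'_1 \otimes M'_1}$ is a $\cT$-intertwining operator rather than merely a vertex algebra intertwining operator. For the explicit examples of $\cT$ enumerated in the paper (unrestricted, worldsheet-symmetric, VOA, $G$-equivariant), this is routine, since the worldsheet operators $L(i)$ on the modules, and any $G$-action, are already present on $M'_1$ and $M_2$ by hypothesis; so I would simply remark that this compatibility is automatic in every case of interest and proceed.
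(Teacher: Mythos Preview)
Your proof is correct and follows essentially the same route as the paper's. The paper's two-sentence argument (``By Lemma~\ref{lem:evenness}, $I_z$ is either even or odd, and the same is true of the restriction. Then evenness can be tested on a single pair of vectors.'') is really the same reduction to Lemma~\ref{lem:evenness}; you have simply made explicit the mechanism behind ``can be tested on a single pair,'' namely that the proportionality constant $\Omega$ between $I_z$ and $I_z^*$ is unchanged under restriction because $(I_z|_{M'_1\otimes M'_1})^* = I_z^*|_{M'_1\otimes M'_1}$, and that Lemma~\ref{lem:evenness} identifies even/odd with $\Omega = \pm 1$.
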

\begin{proof}
By Lemma \ref{lem:evenness}, $I_z$ is either even or odd, and the same is true of the restriction.  Then evenness can be tested on a single pair of vectors.
\end{proof}

\begin{lem}
Let $V$ and $V'$ be $\cT$-vertex algebras, let $I_z: M_1 \otimes M_1 \to M_2((z))$ (resp. $I'_z: M'_1 \otimes M'_1 \to M'_2((z))$) be a $\cT$-intertwining operator of $V$-modules (resp. $V'$-modules) that is proportional to $I_z^*$ (resp. $I_z^{\prime,*}$).  Then $I_z \otimes I'_z: (M_1 \otimes M'_1) \otimes (M_1 \otimes M'_1) \to (M_2 \otimes M'_2)((z))$ is even if and only if either $I_z$ and $I'_z$ are both even or both odd.  If exactly one of $I_z$ and $I'_z$ is even, then $I_z \otimes I'_z$ is odd.
\end{lem}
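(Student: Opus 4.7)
The plan is to reduce the statement to the calculation of a single proportionality constant, by using Lemma \ref{lem:evenness} as a criterion. That lemma implies that since $I_z$ is proportional to $I_z^*$, there is a constant $c \in \{\pm 1\}$ with $I_z = c \cdot I_z^*$, where $c = +1$ precisely when $I_z$ is even and $c = -1$ precisely when $I_z$ is odd; similarly for $I'_z$ with a constant $c' \in \{\pm 1\}$. The goal is therefore to show that the tensor product $I_z \otimes I'_z$ is proportional to its star with constant $cc'$, because then the same lemma concludes that $I_z \otimes I'_z$ is even iff $cc' = 1$ (i.e., $c = c'$, both even or both odd) and odd iff $cc' = -1$ (i.e., exactly one of the two is even).

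The key step is to verify the identity $(I_z \otimes I'_z)^* = I_z^* \otimes I_z^{\prime,*}$ of maps $(M_1 \otimes M'_1) \otimes (M_1 \otimes M'_1) \to (M_2 \otimes M'_2)((z))$. Here the translation operator on the tensor product module $M_2 \otimes M'_2$ is $L(-1) = L(-1)^{M_2} \otimes 1 + 1 \otimes L(-1)^{M'_2}$, and these two summands commute, so $e^{zL(-1)}$ factors as $e^{zL(-1)^{M_2}} \otimes e^{zL(-1)^{M'_2}}$. Consequently
\[
(I_z \otimes I'_z)^*((u \otimes u') \otimes (v \otimes v')) = e^{zL(-1)} \bigl(I_{-z}(v \otimes u) \otimes I'_{-z}(v' \otimes u')\bigr) = I_z^*(u \otimes v) \otimes I_z^{\prime,*}(u' \otimes v'),
\]
so evaluating $(I_z \otimes I'_z)((u \otimes u') \otimes (v \otimes v')) = I_z(u \otimes v) \otimes I'_z(u' \otimes v') = c I_z^*(u \otimes v) \otimes c' I_z^{\prime,*}(u' \otimes v')$ yields the desired proportionality constant $cc'$.

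With that in hand, one applies Lemma \ref{lem:evenness} to $I_z \otimes I'_z$, which is nonzero since both factors are, and which is by the previous paragraph proportional to its star. The lemma classifies the operator as even or odd according to whether the constant equals $+1$ or $-1$, giving exactly the claimed sign rule. The only subtle point worth checking is that $I_z \otimes I'_z$ genuinely lies in the setup of Lemma \ref{lem:evenness}, but since we only use the part of that lemma that rests on an arbitrary $\cT$-intertwining operator proportional to its star (and neither $V$-module structure nor $\cT$-structure intervenes in the star construction itself beyond the translation operator), no additional hypothesis on the tensor product vertex algebra $V \otimes V'$ is required for the parity calculation to go through.
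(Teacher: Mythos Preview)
Your argument is correct. The paper's own proof is the single sentence ``This follows straightforwardly from the definition,'' which presumably intends a direct check on lowest-order terms: for instance, in the non-alternating case one picks $u,u'$ with $I_z(u\otimes u)$ and $I'_z(u'\otimes u')$ of lowest orders $n,n'$, and reads off that $(I_z\otimes I'_z)((u\otimes u')\otimes(u\otimes u'))$ has lowest order $n+n'$. Your route instead passes through the characterization of even/odd by the sign of the proportionality constant to the star (extracted from the proof of Lemma~\ref{lem:evenness}), computes $(I_z\otimes I'_z)^* = I_z^*\otimes I_z^{\prime,*}$ via the factorization $e^{zL(-1)} = e^{zL(-1)^{M_2}}\otimes e^{zL(-1)^{M'_2}}$, and reads off the product $cc'$. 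This has the advantage of uniformly handling the alternating and non-alternating cases without a separate analysis (note that the tensor of two alternating operators need not be alternating, so a direct argument from the definition does require some care there). Your closing remark that only the $L(-1)$ part of the structure is used is the right observation to justify invoking the parity criterion in this tensor-product setting.
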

\begin{proof}
This follows straightforwardly from the definition.
\end{proof}

\begin{prop} \label{prop:existence}
Let $V$ be a $\cT$-vertex algebra, let $A$ be an abelian group, and let $\{ M_i \}_{i \in A}$ be a $\cT$-compatible set of $V$-modules in $\cT$, such that $M_0 = V$.  Given a one dimensional even $\cT$-commutativity datum, there exists a choice of nonzero elements $\{ m^{i,j}_z \in \cI_{i,j}^{i+j} \}_{i,j \in A}$ that defines a $\cT$-vertex algebra structure on $\bigoplus_{i \in A} M_i$.  Furthermore, the vertex algebra structure is unique up to isomorphism.
\end{prop}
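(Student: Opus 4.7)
The plan is to derive this proposition as an essentially immediate consequence of Theorem \ref{thm:obstruction} combined with Lemma \ref{lem:evenness}. First I would fix any normalized collection of nonzero intertwining operators $\{m^{i,j}_z \in \cI^{i+j}_{i,j}\}_{i,j \in A}$. This is possible because normalization only imposes the choices $m^{0,i}_z = act^i_z$ and $m^{i,0}_z = act^{i,*}_z$, which are exactly the spanning elements guaranteed by conditions (3) and (4) of the definition of a $\cT$-associativity datum; at all other slots the space $\cI^{i+j}_{i,j}$ is one-dimensional and any nonzero element will do. Proposition \ref{prop:get-cocycle} then produces from this data a normalized abelian 3-cocycle $(F,\Omega)$.

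The key observation is that the evenness hypothesis translates directly into the vanishing of the obstruction quadratic form. By Lemma \ref{lem:evenness}, evenness of each nonzero $m^{i,i}_z \in \cI^{2i}_{i,i}$ is equivalent to $\Omega(i,i) = 1$ for every $i \in A$. Hence the quadratic form $i \mapsto \Omega(i,i)$ appearing in Theorem \ref{thm:obstruction}(2) is identically one. Applying part (3) of Theorem \ref{thm:obstruction} then delivers a normalized 2-cochain $\lambda$ for which the rescaled collection $\{\lambda_{i,j} m^{i,j}_z\}$ equips $\bigoplus_{i \in A} M_i$ with the desired $A$-graded $\cT$-vertex algebra structure. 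Uniqueness up to isomorphism is precisely part (4) of the same theorem.

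Since the whole argument merely assembles already-established results, I do not anticipate any serious obstacle. The only point worth pausing on is the appeal to $\cT$-compatibility of $\{M_i\}_{i \in A}$, which is needed to invoke part (3) of Theorem \ref{thm:obstruction} (in particular to ensure that the direct sum still lies in $\cT$, e.g.\ satisfies any grading-restriction conditions when $\cT$ is the VOA type); this is built into the hypotheses, so no additional work is required.
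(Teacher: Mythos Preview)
Your proposal is correct and follows essentially the same approach as the paper's own proof: invoke Lemma \ref{lem:evenness} to conclude $\Omega(i,i)=1$ from evenness, then apply parts (3) and (4) of Theorem \ref{thm:obstruction} for existence and uniqueness. The paper's proof is more terse but identical in substance.
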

\begin{proof}
By Theorem \ref{thm:obstruction}, to prove the first assertion, it suffices to show that $\Omega(i,i) = 1$ for all $i \in A$.  This follows from Lemma \ref{lem:evenness}, because the commutativity datum is even.  The uniqueness follows from the last claim in Theorem \ref{thm:obstruction}.
\end{proof}

\begin{cor} \label{cor:2A-is-even}
Let $V$ be a $\cT$-vertex algebra, let $A$ be an abelian group, let $2\cdot A$ denote the subgroup of $A$ whose elements are even multiples, and let $\{ M_i \}_{i \in A}$ be a set of $V$-modules in $\cT$, such that $M_0 = V$, and $\{ M_i \}_{i \in 2A}$ is $\cT$-compatible.  Given a one dimensional $\cT$-commutativity datum, there exists a choice of nonzero elements $\{ m^{i,j}_z \in \cI_{i,j}^{i+j} \}_{i,j \in 2 \cdot A}$ that defines a $\cT$-vertex algebra structure on $\bigoplus_{i \in 2\cdot A} M_i$.  In particular, evenness is automatic for 2-divisible groups, such as finite abelian groups of odd order.
\end{cor}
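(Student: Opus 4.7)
The plan is to deduce this directly from Proposition \ref{prop:existence} by restricting the commutativity datum to the subgroup $2A \subseteq A$ and verifying that the restriction is automatically even.

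First I would observe that the restriction of the given one dimensional $\cT$-commutativity datum to indices in $2A$ remains a one dimensional $\cT$-commutativity datum. All the structural requirements (existence of $\cI_{i,j,k}^{i+j+k}$ and $\cI_{i,j,k,\ell}^{i+j+k+\ell}$, the isomorphisms induced by composition, the anchor conditions on $\cI_{0,i}^i$ and $\cI_{i,0}^i$, and compatibility with skew-symmetry) are inherited from the original datum, using that $0 \in 2A$ and $M_0 = V$.

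Next I would show the restricted datum is even. Choose any normalized nonzero elements $\{m^{i,j}_z \in \cI_{i,j}^{i+j}\}_{i,j \in A}$; by Proposition \ref{prop:get-cocycle} they yield a normalized abelian $3$-cocycle $(F,\Omega)$ on $A$ with coefficients in $\bC^\times$. By Lemma \ref{lem:evenness}, the quadratic form $Q : A \to \bC^\times$ defined by $Q(i) = \Omega(i,i)$ in fact takes values in $\{\pm 1\}$. Lemma \ref{lem:forms-with-restricted-values} then forces $Q \equiv 1$ on the subgroup $2A$. Applying Lemma \ref{lem:evenness} once more (in the other direction), every $m^{i,i}_z$ with $i \in 2A$ is even, so the restricted commutativity datum on $2A$ is even. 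Proposition \ref{prop:existence} then produces the desired $\cT$-vertex algebra structure on $\bigoplus_{i \in 2A} M_i$, using the hypothesis that $\{M_i\}_{i \in 2A}$ is $\cT$-compatible.

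For the concluding remark, if $A$ is $2$-divisible then $2A = A$ and the above construction covers the entire sum; in particular a finite abelian group of odd order is $2$-divisible, since multiplication by $2$ is an injective (hence bijective) endomorphism of a finite odd-order group. There is essentially no obstacle here beyond making sure the structural conditions of a commutativity datum restrict cleanly to a subgroup; the key homological input (that a $\{\pm 1\}$-valued quadratic form is trivial on $2A$) is already packaged in Lemma \ref{lem:forms-with-restricted-values}.
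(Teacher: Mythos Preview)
Your proof is correct and follows essentially the same route as the paper: both use Lemma~\ref{lem:evenness} to see that $i\mapsto\Omega(i,i)$ is $\{\pm 1\}$-valued, invoke Lemma~\ref{lem:forms-with-restricted-values} to conclude it is identically $1$ on $2A$, and then apply Proposition~\ref{prop:existence}. The only cosmetic difference is that the paper cites Lemma~\ref{lem:cocycle-trace} directly for the quadratic form property, whereas you route through Proposition~\ref{prop:get-cocycle} (which is fine, since the cocycle property plus Lemma~\ref{lem:cocycle-trace} gives the same conclusion); you might add an explicit pointer to Lemma~\ref{lem:cocycle-trace} when you first call $Q$ a quadratic form.
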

\begin{proof}
By \ref{lem:evenness}, the function $i \mapsto \Omega(i,i)$ takes values in $\pm 1$, and by Lemma \ref{lem:cocycle-trace}, it is a quadratic form, so by Lemma \ref{lem:forms-with-restricted-values}, $i \mapsto \Omega(2i, 2i)$ is identically 1.  Then the result follows from Proposition \ref{prop:existence}.
\end{proof}

\begin{cor}
Given a $\cT$-commutativity datum for a $\cT$-compatible set of modules $\{M_i\}_{i \in A}$, the sum $\bigoplus_{i \in A} M_i$ admits a vertex algebra structure induced by the intertwining operators if and only if for each coset $2A+k$ in $A$, there exists some representative $i \in 2A+k$ and an even intertwining operator $m^{i,i}_z \in \cI_{i,i}^{2i}$.
\end{cor}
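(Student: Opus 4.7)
The plan is to combine Theorem \ref{thm:obstruction}(3) with the evenness translation of Lemma \ref{lem:evenness} and the quadratic-form rigidity of Lemma \ref{lem:forms-with-restricted-values}. The whole point is that the obstruction is controlled by the $\pm 1$-valued quadratic form $Q(i) := \Omega(i,i)$, and such a form is automatically constant on cosets of $2A$, so one only has to check a single representative per coset.

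For the forward direction, suppose $\bigoplus_{i\in A} M_i$ admits a vertex algebra structure induced by the given intertwining operators, possibly after a $\lambda$-twist. By Theorem \ref{thm:obstruction}(2), the function $i \mapsto \Omega(i,i)$ is invariant under $\lambda$-twisting, and by Theorem \ref{thm:obstruction}(3) (or equivalently Proposition \ref{prop:criterion-for-existence} applied to the twisted operators), the existence of the vertex algebra forces $\Omega(i,i) = 1$ for every $i \in A$. By Lemma \ref{lem:evenness}, this is equivalent to $m^{i,i}_z$ being even for every $i \in A$, which in particular provides an even representative in every coset of $2A$.

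For the converse, suppose that for each coset $2A + k$ we are given a representative $i \in 2A + k$ and an even intertwining operator $m^{i,i}_z \in \cI^{2i}_{i,i}$. By Lemma \ref{lem:evenness}, this means $\Omega(i,i) = 1$ for this representative $i$. By Theorem \ref{thm:obstruction}(2) and Lemma \ref{lem:cocycle-trace}, the function $Q(i) := \Omega(i,i)$ is a quadratic form on $A$ taking values in $\pm 1$. Lemma \ref{lem:forms-with-restricted-values} then guarantees that $Q$ is constant on cosets of $2A$, so $Q \equiv 1$ on the entire coset $2A + k$. Since $k$ was arbitrary, $Q \equiv 1$ on $A$, and Theorem \ref{thm:obstruction}(3) supplies the normalized 2-cochain $\lambda$ that rescales the $m^{i,j}_z$ to yield the desired $\cT$-vertex algebra structure on $\bigoplus_{i \in A} M_i$.

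There is no real obstacle here; the corollary is a clean bookkeeping consequence of the cube identity for $\pm 1$-valued quadratic forms (which is what makes Lemma \ref{lem:forms-with-restricted-values} work). The only mild subtlety worth highlighting is that evenness of $m^{i,i}_z$ is independent of the choice of nonzero element in the one-dimensional space $\cI^{2i}_{i,i}$, so the statement is well-posed in terms of the commutativity datum alone; this was already observed in Lemma \ref{lem:evenness}.
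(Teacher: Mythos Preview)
Your proof is correct and follows essentially the same approach as the paper's: both reduce the question to the constancy of the $\pm 1$-valued quadratic form $i \mapsto \Omega(i,i)$ on cosets of $2A$ via Lemma \ref{lem:forms-with-restricted-values}, translate between evenness and $\Omega(i,i)=1$ via Lemma \ref{lem:evenness}, and invoke the obstruction criterion (Theorem \ref{thm:obstruction}(3)/Proposition \ref{prop:existence}) for the equivalence with existence. The paper's version is just terser, leaving the forward direction and the final appeal to the obstruction theorem implicit.
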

\begin{proof}
By Lemma \ref{lem:forms-with-restricted-values}, any $\pm 1$-valued quadratic form on $A$ is identically 1 on $2A$, and constant on $2A+k$.  Thus, evenness of one intertwining operator is equivalent to evenness of the commutativity datum.
\end{proof}

\subsection{Modules and intertwining operators}

In addition to building vertex algebras from parts, we would like to build new modules and consider intertwining operators between them.

\begin{defn}
Let $V$ be a $\cT$-vertex algebra, let $A$ be an abelian group, let $\{ \cI_{i,j}^{i+j} \}_{i,j \in A}$ be an even $\cT$-commutativity datum with modules $\{M_i \}_{i \in A}$, let $S$ be a set equipped with an $A$-action (written as addition), and let $\{M_s\}_{s \in S}$ be a set of $V$-modules in $\cT$.  We define a $\cT$-module datum to be a set $\{ \cI_{i,s}^{i+s} \}_{i \in A, s \in S}$ of one-dimensional vector spaces, whose elements are $\cT$-intertwining operators $M_i \otimes M_s \to M_{i+s}((z))$, such that:
\begin{enumerate}
\item For each $i,j \in A$, $s \in S$, there exists a one dimensional vector space $\cI_{i,j,s}^{i+j+s}$ whose elements are maps $M_i \otimes M_j \otimes M_s \to M_{i+j+s}[[w,z]][w^{-1}, z^{-1}, (w-z)^{-1}]$, such that composition of $\cT$-intertwining operators, combined with the canonical injections from Lemma \ref{lem:basic-embeddings}, induces isomorphisms from $\cI^{i+j+s}_{i,j+s} \otimes \cI^{j+s}_{j,s}$ to $\cI_{i,j,s}^{i+j+s}$ and from $\cI^{i+j+s}_{i+j,s} \otimes \cI_{i,j}^{i+j}$ to $\cI_{i,j,s}^{i+j+s}$.  
\item For any $i,j,k\in A$, $s \in S$, there exists a one-dimensional space $\cI_{i,j,k,s}^{i+j+k+s}$ whose elements are maps:
\[ M_i \otimes M_j \otimes M_k \otimes M_s \to M_{i+j+k+s}[[z,w,t]][z^{-1}, w^{-1} t^{-1}, (z-w)^{-1}, (z-t)^{-1}, (w-t)^{-1}] \]
such that composition of intertwining operators, combined with one of the canonical injections from Lemma \ref{lem:star-of-embeddings}, induces an isomorphism from $\cI^{i+j+k+s}_{i,j+k+s} \otimes \cI^{j+k+s}_{j,k+s} \otimes \cI^{k +s}_{k,s}$ to $\cI_{i,j,k,s}^{i+j+k+s}$.
\item The space $\cI_{0,s}^s$ is spanned by the module structure map $act^s_z: V \otimes M_s \to M_s((z))$.
\end{enumerate}

Suppose we are given a choice of nonzero intertwining operators $\{ m^{i,j}_z\}$ inducing a $\cT$-vertex algebra structure on $\bigoplus_{i \in A} M_i$.  Given a $\cT$-module datum, and any choice of nonzero intertwining operators $\{ m^{i,s}_z \in \cI_{i,s}^{i+s} \}_{i \in A, s \in S}$, we define the function $\Phi: A^{\oplus 2} \times S \to \bC^\times$ by
\[ m^{i,j+s}_z \circ (1 \otimes m^{j,s}_w) = \Phi(i,j,s) m^{i+j,s}_w \circ (m^{i,j}_{z-w} \otimes 1) \]
\end{defn}

\begin{lem} \label{lem:module-datum-cocycle}
Given a $\cT$-module datum, and any choice of nonzero intertwining operators $\{ m^{i,s}_z \in \cI_{i,s}^{i+s} \}_{i \in A, s \in S}$, the function $\Phi$ satisfies the identity: 
\[ \Phi(i,j+k,s) \Phi(j,k,s) = \Phi(i+j,k,s) \Phi(i,j,k+s) \]
for any $i,j,k \in A, s \in S$.  In other words, $\Phi$ is a 2-cocycle for $A$ with coefficients in the $A$-module $Fun(S, \bC^\times)$.  The group of 1-cochains $\{ \lambda_{i,s} \}_{i \in A, s \in S}$ (which can be viewed as maps $A \times S \to \bC^\times$) acts by rescaling intertwining operators, and it acts transitively on the set of 2-cocycles in the same cohomology class, by coboundary translation.  The 2-cocycle $\Phi$ is homologically trivial if and only if there exists an $S$-graded $\bigoplus_{i \in A} M_i$-module structure on $\bigoplus_{s \in S} M_s$ whose component structure maps span the $\cT$-module datum.  The action of 1-cochains, when restricted to 1-cocycles, induces an action of $H^1(A, Fun(S,\bC^\times))$ on isomorphism classes of $S$-graded modules with fixed graded pieces.  The action of $0$-cochains by rescaling modules induces an action of the 0-cocycle group $Fun(S,\bC^\times)^A$ by automorphisms of the $S$-graded module.
\end{lem}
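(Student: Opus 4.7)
My plan is to mimic the argument of Lemma~\ref{lem:pentagon} with one of the four tensor factors now taken from $M_s$ for $s \in S$. To establish the cocycle identity, I will pick vectors $u_i \in M_i$, $u_j \in M_j$, $u_k \in M_k$, $u_s \in M_s$ such that
\[ m^{i,j+k+s}_z \circ (1 \otimes m^{j,k+s}_w) \circ (1 \otimes 1 \otimes m^{k,s}_t)(u_i \otimes u_j \otimes u_k \otimes u_s) \]
is a nonzero element of $M_{i+j+k+s}((z))((w))((t))$; the existence of such a witness, along with the fact that the composite factors through the canonical map into
\[ M' := M_{i+j+k+s}[[z,w,t]][z^{-1},w^{-1},t^{-1},(z-w)^{-1},(z-t)^{-1},(w-t)^{-1}], \]
is guaranteed by the one-dimensionality of $\cI^{i+j+k+s}_{i,j,k,s}$. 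Letting $u' \in M'$ denote the resulting image, I trace $u'$ through two reassociations that both terminate at $m^{i+j+k,s}_t \circ (m^{i+j,k}_{w-t} \otimes 1) \circ (m^{i,j}_{z-w} \otimes 1 \otimes 1)$: the first uses the defining $\Phi$-relation at $(i,j,k+s)$ and $(i+j,k,s)$ and picks up the factor $\Phi(i,j,k+s)\Phi(i+j,k,s)$, while the second uses it at $(j,k,s)$ and $(i,j+k,s)$, producing $\Phi(j,k,s)\Phi(i,j+k,s)$. The one diamond in the second path that reassociates purely among the $A$-indexed factors is governed by $F(i,j,k)$, which equals $1$ because $\bigoplus_i M_i$ already carries the vertex algebra structure supplied by the commutativity datum (cf.~Proposition~\ref{prop:criterion-for-existence}). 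Equating the coefficients of $u'$ produces $\Phi(i,j+k,s)\Phi(j,k,s) = \Phi(i+j,k,s)\Phi(i,j,k+s)$.

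For the cohomological reformulation, I will endow $Fun(S,\bC^\times)$ with the $A$-action $(k \cdot f)(s) := f(s+k)$. A direct substitution shows that replacing $m^{i,s}_z$ by $\lambda_{i,s} m^{i,s}_z$ transforms $\Phi(i,j,s)$ into $\Phi(i,j,s) \cdot \lambda_{i,j+s}\lambda_{j,s}/\lambda_{i+j,s}$, which is exactly the multiplicative coboundary $d\lambda(i,j)(s)$ of the 1-cochain $\lambda \in C^1(A, Fun(S,\bC^\times))$ defined by $\lambda_i(s) := \lambda_{i,s}$. Since 1-cochains act freely and transitively on 2-cochains, this action restricts to a transitive action on each fixed cohomology class of 2-cocycles, with stabilizer the group of 2-cocycles.

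For the last two claims, I interpret a 0-cochain $c \in C^0(A, Fun(S,\bC^\times)) = Fun(S,\bC^\times)$ as a rescaling of the $S$-graded vector space $W := \bigoplus_{s \in S} M_s$ by $c_s$ on each $M_s$. Conjugating a putative module structure by such a rescaling multiplies $m^{i,s}_z$ by $c_{i+s}/c_s$, which is precisely the 0-coboundary $d^0 c(i,s)$. Hence two 1-cochain twists of the intertwining data yield isomorphic $S$-graded $V$-modules precisely when they differ by a 0-coboundary, giving a free action of $H^1(A, Fun(S,\bC^\times))$ on isomorphism classes of $S$-graded modules with the prescribed graded pieces. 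The stabilizer of a given structure under the 0-cochain action is the kernel of $d^0$, namely $\{c : c_{s+i} = c_s \text{ for all } i \in A, s \in S\} = Fun(S,\bC^\times)^A$, and these elements act as $S$-graded module automorphisms.

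The main obstacle is the bookkeeping analogous to that of Lemma~\ref{lem:hexagon}: at each application of a $\Phi$-relation I must verify that the two composites under comparison genuinely factor through the common space $M'$, so that their equality is a statement inside one vector space rather than an artifact of distinct power-series completions. The one-dimensionality of $\cI^{i+j+k+s}_{i,j,k,s}$ together with the commuting star of embeddings of Lemma~\ref{lem:star-of-embeddings} provides this, but as in the earlier lemmas the discipline of tracking which distinguished injection is being applied at each intermediate step is essential.
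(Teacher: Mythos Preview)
Your proposal is correct and follows essentially the same approach as the paper: mimic the pentagon argument of Lemma~\ref{lem:pentagon} with the last factor in $S$, observe that the extra factor $F(i,j,k)$ equals $1$ by the assumed vertex algebra structure on $\bigoplus_i M_i$, and then unwind the effect of rescaling on $\Phi$ to identify the coboundary formula and the resulting cohomological actions. One small slip: where you write ``with stabilizer the group of 2-cocycles'' you mean the group of 1-cocycles (those $\lambda$ with $d\lambda = 1$).
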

\begin{proof}
By essentially the same argument as the proof of Lemma \ref{lem:pentagon}, we have
\[ F(i,j,k) \Phi(i,j+k,s) \Phi(j,k,s) = \Phi(i+j,k,s) \Phi(i,j,k+s) \]
However, we assume $F(i,j,k) = 1$ for all $i,j,k \in A$, so we obtain the identity we want.

By examining the definition of $\Phi$, we see that rescaling the intertwining operators by some 1-cochain $\{ \lambda_{i,s} \}_{i \in A, s \in S}$ multiplies $\Phi$ by the coboundary $d\lambda$, where $(d\lambda)(i,j,s) = \frac{\lambda_{i,j+s} \lambda_{j,s}}{\lambda_{i+j,s}}$.

$\Phi$ is homologically trivial if and only if there is a 1-cochain $\lambda$ such that $d\lambda \cdot \Phi = 0$, which is equivalent to the maps $\{ \lambda_{i,s} m^{i,s}_z\}$ inducing a module structure.

If $\Phi$ is homologically trivial, then translation by the coboundary of a 1-cocycle $\{\lambda_{i,s} \}$ preserves the triviality of the $\Phi$, so we have an action of 1-cocycles on isomorphism classes of modules.  1-coboundaries lie in the kernel of this action, since they amount to rescaling pieces of the module, i.e., there is a block diagonal matrix that induces a module isomorphism.  The automorphisms induced by 0-cocycles are just rescalings on $A$-orbits in $S$.
\end{proof}

\begin{defn}
Let $V$ be a $\cT$-vertex algebra, let $A$ be an abelian group, let $M_A = \bigoplus_{i \in A}$ be a $\cT$-vertex algebra given by a $\cT$-commutativity datum on $V$ modules $\{M_i \}_{i \in A}$, let $+: S_1 \times S_2 \to S_3$ be an $A$-equivariant map of $A$-sets (written as addition), and let $M_{S_1}, M_{S_2}, M_{S_3}$ be $M_A$-modules given by $\cT$-module data.  We define a $\cT$-intertwining operator datum to be a set $\{ \cI_{r,s}^{r+s} \}_{r \in S_1, s \in S_2}$ of one-dimensional vector spaces, whose elements are $\cT$-intertwining operators $M_r \otimes M_s \to M_{r+s}((z))$, such that:
\begin{enumerate}
\item For each $i \in A$, $r \in S_1, s \in S_2$, there exists a one dimensional vector space $\cI_{i,r,s}^{i+r+s}$ whose elements are maps $M_i \otimes M_r \otimes M_s \to M_{i+r+s}[[w,z]][w^{-1}, z^{-1}, (w-z)^{-1}]$, such that composition of intertwining operators, combined with the canonical injections from Lemma \ref{lem:basic-embeddings}, induces isomorphisms from $\cI^{i+r+s}_{i,r+s} \otimes \cI^{r+s}_{r,s}$ to $\cI_{i,r,s}^{i+r+s}$ and from $\cI^{i+r+s}_{i+r,s} \otimes \cI_{i,r}^{i+r}$ to $\cI_{i,r,s}^{i+r+s}$.  
\item For any $i,j\in A$, $r \in S_1$, $s \in S_2$, there exists a one-dimensional space $\cI_{i,j,r,s}^{i+j+r+s}$ whose elements are maps:
\[ M_i \otimes M_j \otimes M_r \otimes M_s \to M_{i+j+r+s}[[z,w,t]][z^{-1}, w^{-1} t^{-1}, (z-w)^{-1}, (z-t)^{-1}, (w-t)^{-1}] \]
such that composition of intertwining operators, combined with one of the canonical injections from Lemma \ref{lem:star-of-embeddings}, induces an isomorphism from $\cI^{i+j+r+s}_{i,j+r+s} \otimes \cI^{j+r+s}_{j,r+s} \otimes \cI^{r +s}_{r,s}$ to $\cI_{i,j,r,s}^{i+j+r+s}$.
\end{enumerate}

Given a $\cT$-intertwining operator datum and any choice of nonzero intertwining operators $\{ m^{r,s}_z \in \cI_{r,s}^{r+s} \}_{r \in S_1, s \in S_2}$, we define the function $\Psi: A \times S_1 \times S_2 \to \bC^\times$ by 
\[ m^{i,r+s}_z \circ (1 \otimes m^{r,s}_w) = \Psi(i,r,s) m^{i+r,s}_w \circ (m^{i,r}_{z-w} \otimes 1) \]
\end{defn}

\begin{lem} \label{lem:intertwining-operator-datum-cocycle}
Given a $\cT$-intertwining operator datum, and any choice of nonzero intertwining operators $\{ m^{r,s}_z \in \cI_{r,s}^{r+s} \}_{r \in S_1, s \in S_2}$, the function $\Psi$ satisfies the identity:
\[ \Psi(i,j+r,s) \Psi(j,r,s) = \Psi(i+j,r,s) \]
In particular, $\Psi$ is a 1-cocycle for $A$ with coefficients in the $A$-module $Fun(S_1 \times S_2, \bC^\times)$, where the $A$-action on $S_1 \times S_2$ is by $a + (s_1,s_2) = (a+s_1,s_2)$, i.e., we choose a trivial $A$-action on $S_2$.  Rescaling the intertwining operators $m^{r,s}_z$ induces an action of the group of 0-cochains, by 1-coboundary translations.  0-cocycles are precisely the elements that leave $\Psi$ unchanged, and in particular, the degree zero cohomology group $Fun(S_1 \times S_2, \bC^\times)^A$ acts on the space of intertwining operators assembled from $\{m^{r,s}_z\}$.
\end{lem}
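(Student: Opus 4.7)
The plan is to follow the strategy of Lemma \ref{lem:pentagon} and Lemma \ref{lem:module-datum-cocycle}: fix $i, j \in A$, $r \in S_1$, $s \in S_2$, pick vectors $u_i \otimes u_j \otimes u_r \otimes u_s \in M_i \otimes M_j \otimes M_r \otimes M_s$ whose image under the four-fold composition
\[ C := m^{i, j+r+s}_z \circ (1 \otimes m^{j, r+s}_w) \circ (1 \otimes 1 \otimes m^{r,s}_t) \]
is nonzero in $M_{i+j+r+s}((z))((w))((t))$, and exploit the fact that $C$ factors through a common nonzero element $u' \in M_{i+j+r+s}[[z,w,t]][z^{-1}, w^{-1}, t^{-1}, (z-w)^{-1}, (z-t)^{-1}, (w-t)^{-1}]$. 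Such a choice exists because $\cI^{i+j+r+s}_{i,j,r,s}$ is one-dimensional and nonzero by hypothesis, and every composition lying in $\cI^{i+j+r+s}_{i,j,r,s}$ factors through a scalar multiple of $u'$.

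I would then compute $C$ via two routes. \emph{Route A:} apply $\Psi(j, r, s)$ (with variables $w, t$) to replace $m^{j, r+s}_w \circ (1 \otimes m^{r,s}_t)$ by $\Psi(j,r,s)\, m^{j+r, s}_t \circ (m^{j, r}_{w-t} \otimes 1)$, then apply $\Psi(i, j+r, s)$ (with variables $z, t$) to the outer composition $m^{i, j+r+s}_z \circ (1 \otimes m^{j+r, s}_t)$; this reaches
\[ C = \Psi(j, r, s)\,\Psi(i, j+r, s)\,\cdot\, m^{i+j+r, s}_t \circ (m^{i, j+r}_{z-t} \otimes 1) \circ (1 \otimes m^{j, r}_{w-t} \otimes 1). \]
\emph{Route B:} first apply the module-associativity identity $\Phi_{S_3}(i, j, r+s) = 1$ (which holds automatically because $M_{S_3}$ is an $M_A$-module, via the decomposition of the module axiom) to rewrite $m^{i, j+r+s}_z \circ (1 \otimes m^{j, r+s}_w)$ as $m^{i+j, r+s}_w \circ (m^{i,j}_{z-w} \otimes 1)$; next use the interchange $(m^{i, j}_{z-w} \otimes 1) \circ (1 \otimes 1 \otimes m^{r, s}_t) = (1 \otimes m^{r, s}_t) \circ (m^{i, j}_{z-w} \otimes 1 \otimes 1)$ (valid because the operations occupy disjoint tensor slots) to expose an application of $\Psi(i+j, r, s)$; finally use $\Phi_{S_1}(i, j, r) = 1$ to rewrite $m^{i+j, r}_{w-t} \circ (m^{i, j}_{z-w} \otimes 1)$ as $m^{i, j+r}_{z-t} \circ (1 \otimes m^{j, r}_{w-t})$. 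The result is
\[ C = \Psi(i+j, r, s)\,\cdot\, m^{i+j+r, s}_t \circ (m^{i, j+r}_{z-t} \otimes 1) \circ (1 \otimes m^{j, r}_{w-t} \otimes 1). \]
Equating the two expressions, cancelling the nonzero common element, yields the cocycle identity $\Psi(i+j, r, s) = \Psi(i, j+r, s) \Psi(j, r, s)$.

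The remaining claims are direct. Viewing $\Psi$ as a map $A \to Fun(S_1 \times S_2, \bC^\times)$ via $i \mapsto \Psi(i, -, -)$, and letting $A$ act by $(j \cdot f)(r, s) = f(j + r, s)$, the identity reads $\Psi(i+j) = \Psi(j) \cdot (j \cdot \Psi(i))$, which is the inhomogeneous $1$-cocycle condition. For the coboundary claim, substituting $m^{r,s}_z \mapsto \lambda_{r,s} m^{r,s}_z$ into both sides of the defining equation for $\Psi$ yields $\tilde{\Psi}(i, r, s) = \Psi(i, r, s) \cdot \lambda_{r,s}/\lambda_{i+r,s}$, precisely translation by the inverse $1$-coboundary of the $0$-cochain $\lambda$. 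The stabilizing $0$-cocycles are exactly the $A$-invariant functions, namely $\lambda \in Fun(S_1 \times S_2, \bC^\times)^A$, and these act on the space of intertwining operators by automorphism.

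The main obstacle is bookkeeping rather than conceptual: one must distinguish four species of intertwining operators (the $M_A$-multiplications on $A$, the three module actions on $M_{S_1}, M_{S_2}, M_{S_3}$, and the given intertwining operator datum), and every use of $\Phi_{S_k} = 1$ must be traced back to the $M_A$-module axioms on $M_{S_k}$. Variable assignments among $z, w, t, z-w, z-t, w-t$ have to be tracked carefully through successive rearrangements, and the commutativity diagrams of Lemma \ref{lem:star-of-embeddings} and Lemma \ref{lem:octagons-of-embeddings} are what guarantee that all rearrangements do in fact produce scalar multiples of the same element $u'$.
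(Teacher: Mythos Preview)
Your argument is correct and follows essentially the same route as the paper: both trace the pentagon for the four-fold composite $i(j(rs))$ to obtain the identity $\Phi_{S_1}(i,j,r)\,\Psi(i,j+r,s)\,\Psi(j,r,s) = \Psi(i+j,r,s)\,\Phi_{S_3}(i,j,r+s)$ and then set the two $\Phi$'s to $1$ because $M_{S_1}$ and $M_{S_3}$ are already $M_A$-modules. The paper compresses this into one line by citing Lemma \ref{lem:pentagon}, while you spell out the two routes and the variable substitutions explicitly; your separation of $\Phi_{S_1}$ from $\Phi_{S_3}$ is a useful clarification that the paper leaves implicit.
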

\begin{proof}
By essentially the same argument as the proof of Lemma \ref{lem:pentagon}, we find that
\[ \Phi(i,j,r) \Psi(i,j+r,s) \Psi(j,r,s) = \Psi(i+j,r,s) \Phi(i,j,r+s) \]
Since we assume $\Phi$ is identically 1, we obtain the identity we want.  
By examining the definition of $\Psi$, we see that rescaling the intertwining operators by some 0-cochain $\{ \lambda_{r,s} \}_{r \in S_1, s \in S_2}$ multiplies $\Psi$ by the coboundary $d\lambda$, where $(d\lambda)(i,r,s) = \frac{\lambda_{i+r,s}}{\lambda_{r,s}}$.
\end{proof}

\begin{prop} \label{prop:formation-of-modules}
Let  $\{ \cI_{i,j}^{i+j} \}_{i,j \in A}$ be a $\cT$-commutativity datum, and let $B \subset A$ be a subgroup such that the quadratic form $i \mapsto \Omega(i,i)$ is constant on cosets of $B$.  Then for any choice of $\cT$-vertex algebra $M_B$ attached to the commutativity datum, the coset sums $M_{B+k} = \bigoplus_{i \in B+k} M_i$ admit the structure of $M_B$-modules in $\cT$, such that for each $i,j \in A$, the restriction of the action to $M_i \otimes M_j$ lands $\cI_{i,j}^{i+j}$.  Furthermore, there exists a one-dimensional commutativity datum $\{ \cI_{B+i,B+j}^{B+i+j} \}$ on $\{M_{B+k} \}$ whose restriction to $\{ M_i \otimes M_j \}$ spans the commutativity datum $\{ \cI_{i,j}^{i+j} \}_{i,j \in A}$.
\end{prop}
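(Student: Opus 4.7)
The plan is to choose a convenient representative cocycle, use it to trivialize associativity whenever one argument lies in $B$, and then reorganize the resulting intertwiners into $M_B$-module structures and a coset-level commutativity datum. First I would invoke Lemma \ref{lem:pullback-quadratic-form}: since $i \mapsto \Omega(i,i)$ is constant on $B$-cosets, there is a normalized abelian 3-cocycle $(\tilde F, \tilde\Omega)$ on $A$ pulled back from one on $A/B$ and representing the same cohomology class as $(F, \Omega)$. By Theorem \ref{thm:obstruction}(1) together with Lemma \ref{lem:normalized-differential}, there is a normalized 2-cochain $\lambda$ such that the $\lambda$-twisted multiplications $\tilde m^{i,j}_z = \lambda_{i,j} m^{i,j}_z$ realize $(\tilde F, \tilde\Omega)$. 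Because $\tilde F$ is pulled back from a normalized cocycle on $A/B$, $\tilde F(b, i, j) = \tilde F(i, b, j) = \tilde F(i, j, b) = 1$ for $b \in B$ and any $i, j \in A$; similarly $\tilde\Omega(b, i) = \tilde\Omega(i, b) = 1$.

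Next, the restriction of $(\tilde F, \tilde\Omega)$ to $B$ is trivial, so by Proposition \ref{prop:criterion-for-existence} the operators $\{\tilde m^{i,j}_z\}_{i,j \in B}$ endow $\bigoplus_{i \in B} M_i$ with a $\cT$-vertex algebra structure, which by Theorem \ref{thm:obstruction}(4) is isomorphic to the given $M_B$, so we may assume equality. For each coset $B+k$, set $M_{B+k} = \bigoplus_{i \in B+k} M_i$ and define $act_z \colon M_B \otimes M_{B+k} \to M_{B+k}((z))$ as the direct sum of $\tilde m^{b,j}_z$ over $b \in B$ and $j \in B+k$. The vacuum and translation axioms pass coordinate-wise from the original intertwining-operator axioms. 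The $M_B$-compatibility (module associativity) reduces, summand by summand, to the vanishing of $\tilde F(b_1, b_2, j)$ for $b_1, b_2 \in B$ and $j \in B+k$, which holds by the pullback property. The $\cT$-enhancement (worldsheet symmetry, VOA finiteness, $G$-equivariance) is inherited exactly as in the proof of Proposition \ref{prop:criterion-for-existence}.

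For the coset-level commutativity datum, I would define $\cI^{B+i+j}_{B+i, B+j}$ as the span of the $M_B$-intertwining operator whose restriction to $M_{i'} \otimes M_{j'}$ (for $i' \in B+i$, $j' \in B+j$) is $\tilde m^{i', j'}_z$. That this assembles into a genuine $M_B$-intertwiner amounts to the relations $\tilde F(b, i', j') = 1 = \tilde F(i', b, j')$ for $b \in B$, again guaranteed by pullback. One-dimensionality follows because $M_B$-compatibility forces any such intertwiner to be determined by its restriction to any single summand $M_{i'} \otimes M_{j'}$, and that restriction lies in the one-dimensional $\cI^{i'+j'}_{i', j'}$; the compatibilities then propagate uniquely to the remaining summands. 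The higher-composition spaces $\cI^{B+i+j+k}_{B+i, B+j, B+k}$ and $\cI^{B+i+j+k+\ell}_{B+i, B+j, B+k, B+\ell}$ are defined analogously, using the triviality of $\tilde F$ on any slice containing an element of $B$; the identity/skew-symmetry axioms for the datum and the coset-level commutativity axiom follow because $\tilde\Omega(i', j')$ depends only on the cosets $B+i$ and $B+j$, yielding a coset-level $\Omega$ that is the pullback of the quadratic structure from $A/B$.

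The main obstacle is verifying one-dimensionality of $\cI^{B+i+j}_{B+i, B+j}$: one must show that the component-by-component compatibilities forced by the $M_B$-module structure are exactly consistent and have no accidental obstruction running across summands. Arranging the cocycle so that $\tilde F(b, i', j') = 1$ for all $b \in B$ is precisely what makes this propagation a system with a one-parameter family of solutions, which is why the pullback step from Lemma \ref{lem:pullback-quadratic-form} is essential; without it, the would-be identifications between components of a coset-level intertwiner could disagree by a nontrivial factor and kill the one-dimensionality.
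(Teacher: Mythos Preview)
Your approach is essentially the same as the paper's: both invoke Lemma \ref{lem:pullback-quadratic-form} to replace $(F,\Omega)$ by a normalized cocycle pulled back from $A/B$, use Theorem \ref{thm:obstruction} to realize it by a choice of intertwining operators, and then read off the module structure and the coset-level commutativity datum from the vanishing of the pulled-back cocycle whenever an argument lies in $B$. The only cosmetic difference is that the paper packages the obstruction-vanishing steps through the $\Phi$ and $\Psi$ formalism of Lemmas \ref{lem:module-datum-cocycle} and \ref{lem:intertwining-operator-datum-cocycle}, whereas you argue directly with $\tilde F$ and $\tilde\Omega$; your explicit discussion of one-dimensionality of $\cI^{B+i+j}_{B+i,B+j}$ is in fact more careful than the paper's treatment.
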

\begin{proof}
By Lemma \ref{lem:pullback-quadratic-form}, $i \mapsto \Omega(i,i)$ is the pullback of a quadratic form on $A/B$, and there is a representing normalized cocycle $(F,\Omega)$ pulled back from a normalized abelian 3-cocycle $(\bar{F}, \bar{\Omega})$ on $A/B$.
By the transitivity claim in Theorem \ref{thm:obstruction}, there exists a choice of intertwining operators $m^{i,j}_z$ representing $(F,\Omega)$.  We claim that from this choice, we obtain a module structure on $M_{B+k}$ for each coset $B + k$, and intertwining operators between such modules that span a commutativity datum.

For the module structure, the obstruction 2-cocycle $\Phi$ vanishes, as it is the restriction of $F$ to $B \times B \times (B+k)$, hence the pullback of the normalized cocycle $\bar{F}$ on $(0,0,k)$.

To show that a map $M_{B+i} \otimes M_{B+j} \to M_{B+i+j}((z))$ is an intertwining operator, we note that the obstruction 1-cocycle $\Psi$ vanishes, as it is the restriction of $F$ to $B \times (B+i) \times (B+j)$, hence the pullback of the normalized cocycle $\bar{F}$ on $(0,i,j)$.

To see that these intertwining operators span an associativity datum, we note that each space $\cI_{B+i,B+j,B+k}^{B+i+j+k}$ is naturally spanned by composites of the intertwining operators we constructed, and the composites differ by the scalar $\bar{F}(B+i,B+j,B+k)$.  To see that this associativity datum is a commutativity datum for $\{M_{B+i}\}_{B+i \in A/B}$, we note that skew-symmetry follows from the normalization of $\bar{\Omega}$, i.e., $\bar{\Omega}(0,B+i) = 1$.
\end{proof}

\begin{cor} \label{cor:2-torsion-modules-exist}
Any $\cT$-commutativity datum for $\{M_i\}_{i \in A}$, induces for each coset $2A+k$ in $A$, and each associated $\cT$-vertex algebra structure $M_{2A}$, an $M_{2A}$-module structure on the sum $M_{2A+k} = \bigoplus_{i \in 2A+k} M_i$, and a $\cT$-commutativity datum for the resulting spaces of intertwining operators.
\end{cor}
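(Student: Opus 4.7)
The plan is to reduce this statement to a direct application of Proposition \ref{prop:formation-of-modules} with the subgroup $B = 2A$. To invoke that proposition, the single hypothesis I must verify is that the quadratic form $i \mapsto \Omega(i,i)$ attached to the commutativity datum is constant on cosets of $2A$ in $A$.

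First, I would recall from Lemma \ref{lem:evenness} that for any $i \in A$, the value $\Omega(i,i)$ lies in $\{\pm 1\}$, since $m^{i,i}_z$ is proportional to its skew-symmetrization $m^{i,i,*}_z$ by the commutativity axiom. Next, by Lemma \ref{lem:cocycle-trace}, the map $i \mapsto \Omega(i,i)$ is a $\bC^\times$-valued quadratic form on $A$. Combining these two facts, Lemma \ref{lem:forms-with-restricted-values} then gives directly that this form is identically $1$ on $2A$ and constant on each coset $2A+k$. This is precisely the hypothesis of Proposition \ref{prop:formation-of-modules} for $B = 2A$.

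With that hypothesis in hand, I would invoke Proposition \ref{prop:formation-of-modules} twice. First, the proposition produces the $\cT$-vertex algebra structure on $M_{2A}$ (which agrees with the one coming from Corollary \ref{cor:2A-is-even}, since both are produced by choosing a normalized cocycle representative whose restriction to $2A \times 2A \times 2A$ is trivializable, and by uniqueness up to isomorphism in Theorem \ref{thm:obstruction}(4) any associated $\cT$-vertex algebra structure on $M_{2A}$ will work). Second, the same proposition asserts that each coset sum $M_{2A+k}$ acquires an $M_{2A}$-module structure whose restriction to $M_i \otimes M_j$ lies in $\cI_{i,j}^{i+j}$, and further that the spaces $\cI_{2A+i,2A+j}^{2A+i+j}$ assembled from the restrictions span a $\cT$-commutativity datum on the collection $\{M_{2A+k}\}_{k \in A/2A}$.

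The only place that requires a moment's care is the compatibility between ``any associated $\cT$-vertex algebra structure $M_{2A}$'' in the statement and ``any choice of $\cT$-vertex algebra $M_B$ attached to the commutativity datum'' in Proposition \ref{prop:formation-of-modules}. Since Theorem \ref{thm:obstruction}(4) guarantees uniqueness up to isomorphism, and the construction in Proposition \ref{prop:formation-of-modules} is manifestly equivariant under rescaling by an abelian $2$-cochain on $2A$, the module and commutativity data are transported by any such isomorphism. I do not expect any substantial obstacle: the corollary is essentially the observation that Lemma \ref{lem:forms-with-restricted-values} automatically verifies the quadratic-form hypothesis of Proposition \ref{prop:formation-of-modules} for the particular subgroup $2A$.
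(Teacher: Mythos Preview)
Your proposal is correct and follows essentially the same approach as the paper: verify via Lemmas \ref{lem:evenness}, \ref{lem:cocycle-trace}, and \ref{lem:forms-with-restricted-values} that the quadratic form $i \mapsto \Omega(i,i)$ is $\pm 1$-valued and constant on cosets of $2A$, then apply Proposition \ref{prop:formation-of-modules} with $B = 2A$. The paper's proof compresses this into two sentences, leaving the $\pm 1$-valued and quadratic-form facts implicit, whereas you spell them out and add a remark about compatibility with different choices of $M_{2A}$; the latter is a harmless elaboration rather than a different argument.
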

\begin{proof}
By Lemma \ref{lem:forms-with-restricted-values}, any $\pm 1$-valued quadratic form on $A$ is identically 1 on $2A$, and constant on cosets of $2A$.  The $\cT$-commutativity datum then satisfies the conditions of Proposition \ref{prop:formation-of-modules}.
\end{proof}

\section{Quasi-simple current extensions} \label{sect:simple-currents}

Now that we have a good theory for constructing vertex algebras and modules from pieces, we'd like to prove existence theorems with fewer hypotheses about the concrete structure of intertwining operators, since they are in general difficult to compute.  Instead, we shall make increasingly strong assumptions on the modules and their categories, and we will find that we can make much stronger conclusions about existence.  Simple currents are ideal for this, since the intertwining operator structure is somehow minimal, and the commutativity datum exists essentially automatically.

Since we are considering vertex algebras that are not necessarily rational vertex operator algebras, we use slightly more general language, appending the prefix ``quasi'' make the distinction.  In particular, our quasi-simple currents are not quite as well-behaved, as they lack some of the finiteness assumptions that appear in the literature.

In addition to our type $\cT$, we will now introduce a new datum, restricting our view to a full subcategory $\cC$ of simple $V$-modules in $\cT$.  This will help us manage fusion, since the full structure of the $V$-module category may be beyond our reach.  Choosing $\cC$ in concrete situations can be rather delicate: if we make $\cC$ too big, our subcategory of quasi-simple currents may collapse to triviality, and if we make $\cC$ too small, we may not get interesting phenomena in the first place.

\subsection{Restricted quasi-simple currents}

\begin{defn} \label{defn:quasi-simple}
Suppose $V$ is a simple $\cT$-vertex algebra, and let $\cC$ be a full subcategory of the category of $V$-modules in $\cT$, such that all objects in $\cC$ are simple $V$-modules.  We define a quasi-simple $\cC$-current to be an object $M$ in $\cC$ such that for any object $N$ in $\cC$, there exists an object $X$ in $\cC$ and a nonzero $\cT$-intertwining operator $m^{M,N}_z: M \otimes N \to X((z))$, such that substitution of coefficients induces isomorphisms $\Hom_V(X,Y) \cong I_{\cT}\binom{Y}{M,N}$ for all objects $Y$ in $\cC$.  In this case, we write $M \boxtimes N$ (or $M \boxtimes_{\cC} N$ if there is the possibility of confusion) to denote the object $X$, and call the pair $(M \boxtimes N, m^{M,n}_z)$ the relative fusion product of $M$ and $N$ in $\cC$.
\end{defn}

\begin{lem}
The isomorphism classes of objects in $\cC$ form a set.
\end{lem}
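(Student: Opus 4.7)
The plan is to bound the cardinality of objects of $\cC$ in terms of the cardinality of $V$, and then to invoke the standard fact that isomorphism classes of algebraic structures of bounded cardinality form a set.

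First I would use simplicity: since every object $M$ of $\cC$ is a simple $V$-module, any nonzero $v \in M$ generates $M$ under the action of $V$. Iterating the structure map $act_z \colon V \otimes M \to M((z))$, the $V$-submodule generated by $v$ is the complex linear span of iterated mode expressions $u^{(1)}_{n_1} u^{(2)}_{n_2} \cdots u^{(k)}_{n_k} v$ with $k \ge 0$, $u^{(i)} \in V$, $n_i \in \bZ$. Counting these symbols gives the cardinality bound $|M| \le \kappa$, where $\kappa$ is a cardinal depending only on $|V|$ (one may take $\kappa = \max(|V|,\aleph_0)$, since the set of finite sequences in a set of that cardinality has the same cardinality).

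Next I would fix once and for all a complex vector space $U$ of dimension $\kappa$. Every object of $\cC$, being a complex vector space of cardinality at most $\kappa$, is vector-space isomorphic to some subspace of $U$, so every isomorphism class in $\cC$ has a representative whose underlying vector space is one of the subspaces of $U$. The subspaces of $U$ form a set, and for each such subspace $W$ the collection of possible $V$-module structures (that is, choices of action map $V \otimes W \to W((z))$, a translation operator $L(-1)$, and whatever additional operators the type $\cT$ requires) is itself a set, being a subset of a set of functions between specified sets.

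Finally, the collection of all such structured triples is a set (an indexed union of sets over the set of subspaces of $U$), and the isomorphism classes of objects of $\cC$ form a quotient of a subset of this set, hence form a set. The only delicate point is the cardinality bound in the first step, and that is immediate from simplicity together with the fact that a vertex algebra module is generated under finitely many iterated mode operations; I do not anticipate a genuine obstacle.
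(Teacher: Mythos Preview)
Your argument is correct and follows a standard cardinality-bounding strategy: bound the size of each simple module using cyclicity, then count module structures on subspaces of a fixed ambient vector space. The paper takes a slightly different route: it notes that any irreducible $V$-module is an irreducible module for the Frenkel--Zhu enveloping associative algebra $U(V)$, and that such a module, being cyclic, is isomorphic to $U(V)/I$ for some maximal left ideal $I$; hence isomorphism classes form a subquotient of the power set of $U(V)$. Both arguments rest on the same fact (simple modules are cyclic), but the paper parametrizes modules directly by annihilator ideals rather than by structures on an abstract space, which bypasses the explicit cardinality bookkeeping. Your approach, on the other hand, avoids invoking the Frenkel--Zhu construction and is thus more self-contained.
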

\begin{proof}
Any irreducible $V$-module is an irreducible module for the Frenkel-Zhu enveloping associative algebra of $V$ \cite{FZ92}.  Isomorphism classes of irreducible modules of an associative algebra can be identified with a subquotient of the power set of the algebra, since any such module is generated by some vector $m$, and hence isomorphic to the quotient by the maximal left ideal $Ann(m)$.
\end{proof}

\begin{rem}
Let $M$ be a quasi-simple current, and let $X = M \boxtimes N$ as in Definition \ref{defn:quasi-simple}.  Skew-symmetry implies all intertwining operators $N \otimes M \to X((z))$ are scalar multiples of $m^{M,N,*}_z$.  In particular, we have canonical isomorphisms $\Hom_V(X,Y) \cong I_{\cT}\binom{Y}{N,M}$.
\end{rem}

\begin{defn}
A pair $(M,N)$ of quasi-simple $\cC$-currents is called composable if the following conditions hold:
\begin{enumerate}
\item $M \boxtimes N$ is a quasi-simple $\cC$-current.
\item For any object $P$ in $\cC$, $M \boxtimes (N \boxtimes P) \cong (M \boxtimes N) \boxtimes P$.
\item For any object $P$ in $\cC$, the composites $m^{M, N \boxtimes P}_z \circ (1 \otimes m^{N,P}_w)$ and $m^{M \boxtimes N,P}_w \circ (m^{M,N}_{z-w} \otimes 1)$ factor through the inclusions from $(M \boxtimes (N \boxtimes P))[[z,w]][z^{-1},w^{-1},(z-w)^{-1}]$ to $(M \boxtimes (N \boxtimes P))((z))((w))$ and $(M \boxtimes (N \boxtimes P))((w))((z-w))$, and the two maps $M \otimes N \otimes P \to M \boxtimes (N \boxtimes P)[[z,w]][z^{-1},w^{-1},(z-w)^{-1}]$ are scalar multiples of each other.
\end{enumerate}
\end{defn}

\begin{lem} \label{lem:composable-pairs-give-associativity-and-commutativity}
If $(M_1,M_2)$ is a composable pair, then for any irreducible module $M_3$, the following four modules are isomorphic: $M_1 \boxtimes (M_2 \boxtimes M_3)$, $(M_1 \boxtimes M_2) \boxtimes M_3$, $(M_2 \boxtimes M_1) \boxtimes M_3$, $M_2 \boxtimes (M_1 \boxtimes M_3)$.
\end{lem}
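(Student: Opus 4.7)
There are only three independent isomorphisms among the four modules, and I will establish each in turn. First, condition (2) of composability applied with $P=M_3$ gives immediately $M_1 \boxtimes (M_2 \boxtimes M_3) \cong (M_1 \boxtimes M_2) \boxtimes M_3$. Second, skew-symmetry (Lemma \ref{lem:skew-sym}) turns $m^{M_1,M_2}_z$ into a nonzero intertwining operator $M_2 \otimes M_1 \to (M_1 \boxtimes M_2)((z))$; since $M_2$ is a simple $\cC$-current, the universal property of $M_2 \boxtimes M_1$ forces $M_2 \boxtimes M_1 \cong M_1 \boxtimes M_2$, and fusing with $M_3$ on the right gives $(M_1 \boxtimes M_2) \boxtimes M_3 \cong (M_2 \boxtimes M_1) \boxtimes M_3$.

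The remaining isomorphism $(M_2 \boxtimes M_1) \boxtimes M_3 \cong M_2 \boxtimes (M_1 \boxtimes M_3)$ is the main content. The plan is to produce a nonzero $\cT$-intertwining operator $\tilde{I}_z : M_2 \otimes (M_1 \boxtimes M_3) \to ((M_2 \boxtimes M_1) \boxtimes M_3)((z))$; because $M_2$ is a simple $\cC$-current, $\tilde{I}_z$ will factor through a nonzero $V$-module homomorphism between two simple modules, necessarily an isomorphism. To build $\tilde{I}_z$, begin with the threefold composite $m^{M_1 \boxtimes M_2, M_3}_w \circ (m^{M_1, M_2}_{z-w} \otimes 1)$ on $M_1 \otimes M_2 \otimes M_3$, which by condition (3) of composability factors through $((M_1 \boxtimes M_2) \boxtimes M_3)[[z,w]][z^{-1},w^{-1},(z-w)^{-1}]$. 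Apply skew-symmetry to the inner operator, rewriting $m^{M_1,M_2}_{z-w}$ as a scalar multiple of $e^{(z-w)L(-1)} m^{M_2,M_1}_{w-z} \circ \tau_{(12)}$, absorb the exponential using translation covariance $m^{M_1 \boxtimes M_2, M_3}_w \circ (e^{(z-w)L(-1)} \otimes 1) = m^{M_1 \boxtimes M_2, M_3}_z$ in the common subspace (a consequence of Lemma \ref{lem:taylor} combined with Lemma \ref{lem:weird-isom}), and identify $M_1 \boxtimes M_2$ with $M_2 \boxtimes M_1$ via Step 2. The upshot is that the composite $m^{M_2 \boxtimes M_1, M_3}_z \circ (m^{M_2, M_1}_{w-z} \otimes 1)$ on $M_2 \otimes M_1 \otimes M_3$ also factors through the same common subspace; re-expanding this element in the $((z))((w))$ coordinates and invoking the universal property of $M_1 \boxtimes M_3$ (with $M_1$ a simple $\cC$-current) to absorb the $M_1, M_3$ dependence through $m^{M_1, M_3}_w$ produces the desired $\tilde{I}_z$.

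The main obstacle is the book-keeping in the last step: one must verify that the skew-symmetry and translation manipulations genuinely produce a well-defined factorization through the common subspace with embeddings consistent with Lemma \ref{lem:basic-embeddings}, and that the re-expansion in the $((z))((w))$ ordering really does factor through $M_1 \boxtimes M_3$ rather than depending on $M_1 \otimes M_3$ in a strictly richer way. Once $\tilde{I}_z$ is in hand, the simplicity of both $M_2 \boxtimes (M_1 \boxtimes M_3)$ and $(M_2 \boxtimes M_1) \boxtimes M_3$ forces the induced $V$-module map between them to be an isomorphism, and combining with the isomorphisms from Steps 1 and 2 proves the lemma.
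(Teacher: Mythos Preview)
Your first two steps agree with the paper's proof. The paper then handles the last isomorphism in one line: since skew-symmetry gives $M_1\boxtimes M_2\cong M_2\boxtimes M_1$, the notion of composable pair is independent of the ordering of its two entries, so $(M_2,M_1)$ is itself a composable pair and condition~(2) applied to it (with $P=M_3$) yields $M_2\boxtimes(M_1\boxtimes M_3)\cong(M_2\boxtimes M_1)\boxtimes M_3$ directly.

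Your third step instead tries to manufacture the intertwining operator $\tilde{I}_z$ by hand, and the final move has a genuine gap. The skew-symmetry and translation manipulations you describe do correctly transport $m^{M_1\boxtimes M_2,M_3}_w\circ(m^{M_1,M_2}_{z-w}\otimes 1)$ to a scalar multiple of $m^{M_2\boxtimes M_1,M_3}_z\circ(m^{M_2,M_1}_{w-z}\otimes 1)$ in the common subspace---this is exactly the mechanism of Lemma~\ref{lem:comm-span}. But the resulting three-input map applies $m^{M_2,M_1}$ to the \emph{first two} tensor factors, not $m^{M_1,M_3}$ to the last two, so there is no direct way to invoke the universal property of $M_1\boxtimes M_3$. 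That universal property classifies intertwining operators $M_1\otimes M_3\to Y((w))$ by $\Hom_V(M_1\boxtimes M_3,Y)$; it does not tell you that an arbitrary power-series-valued map out of $M_2\otimes M_1\otimes M_3$ descends to $M_2\otimes(M_1\boxtimes M_3)$. What you flag as ``book-keeping'' is in fact the whole content: showing that the composite re-expands as $\tilde{I}_z\circ(1\otimes m^{M_1,M_3}_w)$ for some intertwining operator $\tilde{I}_z$ is equivalent to establishing condition~(3), hence condition~(2), of composability for the swapped pair $(M_2,M_1)$---which is exactly what the paper packages into the single phrase ``independent of order.''
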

\begin{proof}
By skew-symmetry, $M_1 \boxtimes M_2 \cong M_2 \boxtimes M_1$, so the notion of composable pair is independent of order.  In particular, the second and third modules are isomorphic.  By the definition of composable pair, the first two modules are isomorphic, and the last two modules are hence isomorphic.
\end{proof}

\begin{lem} \label{lem:composable-pairs-give-locality}
If $(M_1,M_2)$ is a composable pair, then for any irreducible module $M_3$ and elements $m_i \in M_i$, $m^{M_1,M_2 \boxtimes M_3}_z \circ (1 \otimes m^{M_2,M_3}_w)(m_1 \otimes m_2 \otimes m_3)$ is a constant multiple of $m^{M_2,M_1 \boxtimes M_3}_w \circ (1 \otimes m^{M_1,M_3}_z)(m_2 \otimes m_1 \otimes m_3)$ in $M_1 \boxtimes (M_2 \boxtimes M_3)[[z,w]][z^{-1},w^{-1},(z-w)^{-1}]$.  In particular, there is some positive integer $N$ such that multiplication by $z^N w^N (z-w)^N$ gives proportional elements in $M_1 \boxtimes (M_2 \boxtimes M_3)[[z,w]]$.
\end{lem}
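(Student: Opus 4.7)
The plan is to parallel the proof of Lemma~\ref{lem:comm-span}, combining the associativity supplied by composability condition~(3) with the skew-symmetry identity of Lemma~\ref{lem:skew-sym} and the Taylor-theorem isomorphism of Lemma~\ref{lem:weird-isom}. The simple current property guarantees that every intertwining operator space one encounters along the way is one-dimensional, so each use of skew-symmetry introduces only a scalar factor.

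Applying composability of $(M_1, M_2)$ with $P = M_3$, I obtain a single element $A \in M_1 \boxtimes (M_2 \boxtimes M_3)[[z,w]][z^{-1}, w^{-1}, (z-w)^{-1}]$ such that
$m^{M_1, M_2 \boxtimes M_3}_z \circ (1 \otimes m^{M_2, M_3}_w)(m_1 \otimes m_2 \otimes m_3) = \iota_{z,w}(A)$
and
$m^{M_1 \boxtimes M_2, M_3}_w \circ (m^{M_1, M_2}_{z-w} \otimes 1)(m_1 \otimes m_2 \otimes m_3) = c \cdot \iota_{w,z-w}(A)$
for some $c \in \bC^\times$. Skew-symmetry (Lemma~\ref{lem:skew-sym}), valid in the one-dimensional space of intertwiners of type $\binom{M_1 \boxtimes M_2}{M_2, M_1}$, rewrites $m^{M_1, M_2}_{z-w}(m_1 \otimes m_2)$ as $\lambda \cdot e^{(z-w) L(-1)} m^{M_2, M_1}_{w-z}(m_2 \otimes m_1)$. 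Substituting and using translation covariance of the outer intertwiner together with Lemma~\ref{lem:weird-isom} (which exchanges expansions in $((w))((z-w))$ and $((z))((z-w))$), I convert the $(M_1, M_2)$-associative composite into a scalar multiple of the $(M_2, M_1)$-associative composite $m^{M_2 \boxtimes M_1, M_3}_z \circ (m^{M_2, M_1}_{w-z} \otimes 1)(m_2 \otimes m_1 \otimes m_3)$. An analogous passage in the opposite direction, performed on the target composite (apply skew-symmetry first to the inner $m^{M_1, M_3}_z$ to introduce $m^{M_3, M_1}$ and an $e^{zL(-1)}$ factor, then to the outer intertwiner, transporting expansions via Lemma~\ref{lem:weird-isom}), expresses $m^{M_2, M_1 \boxtimes M_3}_w \circ (1 \otimes m^{M_1, M_3}_z)(m_2 \otimes m_1 \otimes m_3)$ as a scalar multiple of the same $(M_2, M_1)$-associative composite, after $M_2 \boxtimes (M_1 \boxtimes M_3)$ is identified with $M_1 \boxtimes (M_2 \boxtimes M_3)$ via Lemma~\ref{lem:composable-pairs-give-associativity-and-commutativity}. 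Chaining the scalar identifications shows that both composites in the statement correspond to the expansions $\iota_{z,w}$ and $\iota_{w,z}$ of proportional elements in the common rational series space, which gives the main claim.

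The final assertion concerning multiplication by $z^N w^N (z-w)^N$ is immediate from the definition of $M_1 \boxtimes (M_2 \boxtimes M_3)[[z,w]][z^{-1}, w^{-1}, (z-w)^{-1}]$: by construction every element of this ring becomes an honest formal power series in $M_1 \boxtimes (M_2 \boxtimes M_3)[[z,w]]$ after multiplication by a sufficiently large power of $zw(z-w)$, and proportionality in the rational series space implies proportionality after this multiplication. The principal obstacle is the bookkeeping of expansion conventions: one must verify that the embeddings of Lemma~\ref{lem:basic-embeddings}, the change-of-expansion isomorphism of Lemma~\ref{lem:weird-isom}, and the successive skew-symmetry identities are all applied in compatible orders so that each intermediate expression lies in a space where the next step makes sense, and that the scalar factors arising from each step combine into a single nonzero constant independent of the vectors $m_1, m_2, m_3$.
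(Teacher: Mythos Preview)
Your proposal is correct in spirit and matches the paper's own proof, which is a one-line sketch: ``This is essentially the standard argument that skew-symmetry and associativity imply locality.'' You have unpacked that argument in detail, paralleling Lemma~\ref{lem:comm-span} as you say.

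One minor point: your treatment of the target composite is more circuitous than needed, and the specific route you describe (applying skew-symmetry to the inner $m^{M_1,M_3}_z$ to introduce $m^{M_3,M_1}$, then to the outer operator) does not obviously terminate at the $(M_2,M_1)$-associative form without invoking associativity for a pair involving $M_3$, which is not assumed to be a simple current. The direct route is simply to apply composability of $(M_2,M_1)$ (which, as noted in Lemma~\ref{lem:composable-pairs-give-associativity-and-commutativity}, follows from composability of $(M_1,M_2)$ by skew-symmetry) with $P=M_3$ and the variables $z,w$ interchanged: this immediately identifies $m^{M_2,M_1\boxtimes M_3}_w\circ(1\otimes m^{M_1,M_3}_z)$ with a scalar multiple of $m^{M_2\boxtimes M_1,M_3}_z\circ(m^{M_2,M_1}_{w-z}\otimes 1)$ in the rational series space, and your first half already brings the source composite to exactly this form. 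With that correction, your argument is complete and coincides with the intended one.
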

\begin{proof}
This is essentially the standard argument that skew-symmetry and associativity imply locality.  
\end{proof}

\begin{defn}
A quasi-simple $\cC$-current is universally composable if it forms composable pairs with all quasi-simple $\cC$-currents.  We write $Curr_{\cC}(V)$ for the full subcategory of universally composable quasi-simple $\cC$-currents.
\end{defn}


\begin{lem} \label{lem:4-input-intertwining-operators}
Suppose $M_1$ and $M_2$ are universally composable quasi-simple $\cC$-currents, and $M_3$ is a quasi-simple $\cC$-current.  For any object $P$ in $\cC$, and any permutation $\sigma$ of $\{1,2,3\}$, let $X_\sigma = M_{\sigma(1)} \boxtimes (M_{\sigma(2)} \boxtimes (M_{\sigma(3)} \boxtimes P))$.  Then all $X_\sigma$ are isomorphic, and all composites
\[ m^{M_{\sigma(1)}, M_{\sigma(2)} \boxtimes (M_{\sigma(3)} \boxtimes P)}_{z_{\sigma(3)}} \circ (1 \otimes m^{M_{\sigma(2)}, M_{\sigma(3)} \boxtimes P}_{z_{\sigma(2)}}) \circ (1 \otimes 1 \otimes m^{M_{\sigma(3)},P}_{z_{\sigma(1)}}) \]
factor through the embeddings
\[ X_1[[z_1,z_2,z_3]][z_1^{-1},z_2^{-1},z_3^{-1},(z_1-z_2)^{-1},(z_1-z_3)^{-1},(z_2-z_3)^{-1}] \to X_\sigma((z_{\sigma(3)}))((z_{\sigma(2)}))((z_{\sigma(1)})). \]
Furthermore, the induced maps $M_1 \otimes M_2 \otimes M_3 \otimes P \to X_1[[z_1,z_2,z_3]][z_1^{-1},z_2^{-1},z_3^{-1},(z_1-z_2)^{-1},(z_1-z_3)^{-1},(z_2-z_3)^{-1}]$ are scalar multiples of each other.
\end{lem}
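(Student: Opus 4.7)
The plan is to bootstrap the 4-input statement from the 3-fold composability hypotheses, mirroring the way Lemma \ref{lem:vafacts}(3) bootstraps $n$-fold locality from pairwise locality in a vertex algebra. First, since $M_1$ and $M_2$ are universally composable and $M_3$ is a simple $\cC$-current, every unordered pair among $\{M_1, M_2, M_3\}$ is composable; moreover each $M_j \boxtimes P$ and each iterated $\boxtimes$-product of these modules remains simple in $\cC$. Applying Lemma \ref{lem:composable-pairs-give-associativity-and-commutativity} with the ``third module'' taken to be $P$ for the transposition of the two innermost positions, and $M_{\sigma(3)} \boxtimes P$ for the transposition of the two outer positions, identifies all six $X_\sigma$ with $X_{\mathrm{id}}$ by chaining over the generators of $S_3$.

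For the factorization through the common polynomial-and-Laurent ring, I treat $\sigma = \mathrm{id}$ first and transport. By composability of $(M_2, M_3)$ with $P$ in the third input slot, the inner composite $m^{M_2, M_3 \boxtimes P}_{z_2} \circ (1 \otimes m^{M_3, P}_{z_1})$ factors through $(M_2 \boxtimes M_3 \boxtimes P)[[z_1, z_2]][z_1^{-1}, z_2^{-1}, (z_1 - z_2)^{-1}]$, and this factorization is preserved after tensoring with the identity on $M_1$ and composing with $m^{M_1, M_2 \boxtimes M_3 \boxtimes P}_{z_3}$. An analogous argument using composability of $(M_1, M_2)$ with third input $M_3 \boxtimes P$ yields a factorization inverting $(z_3 - z_2)$, and composability of $(M_1, M_3)$ with third input $M_2 \boxtimes P$, after a Lemma \ref{lem:composable-pairs-give-locality} swap of $M_2$ and $M_3$ in the inner pair, yields a factorization inverting $(z_3 - z_1)$. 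Intersecting the three resulting single-difference polynomial-inversion rings inside the ambient Laurent completion gives the desired factorization through the common ring with all three differences inverted. For the scalar multiplicity assertion, I invoke Lemma \ref{lem:composable-pairs-give-locality} at the appropriate depth: the transposition of the two outermost modules uses the composable pair $(M_{\sigma(1)}, M_{\sigma(2)})$ acting on the irreducible tail $M_{\sigma(3)} \boxtimes P$, and the transposition of the two innermost modules uses the composable pair $(M_{\sigma(2)}, M_{\sigma(3)})$ acting on $P$. Since these two adjacent transpositions generate $S_3$, chaining gives scalar multiplicity for every pair of composites in the common factored ring.

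The main obstacle is the coherent assembly of the three single-difference factorizations into a single factorization inverting all three differences simultaneously; this requires checking that the images of the composite under the distinct expansion maps of Lemma \ref{lem:basic-embeddings} agree on the intersection, so that the composite genuinely lies in all three polynomial-inversion rings at once. Once this coherence is established for $\sigma = \mathrm{id}$, the factorization for every other $\sigma$ follows via the transport of the previous paragraph, and the scalar multiplicity of all six induced maps into the common ring is automatic.
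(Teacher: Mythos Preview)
Your treatment of the isomorphisms $X_\sigma \cong X_{\mathrm{id}}$ via chained applications of Lemma~\ref{lem:composable-pairs-give-associativity-and-commutativity}, and of the scalar-multiplicity assertion via adjacent-transposition locality from Lemma~\ref{lem:composable-pairs-give-locality}, is correct and matches the paper.

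Where you diverge is in the factorization step, and the ``main obstacle'' you flag is a genuine gap rather than a routine check. Your three single-difference factorizations for $\sigma=\mathrm{id}$ do not all constrain the \emph{same} element of the \emph{same} ambient space: the inner composability of $(M_2,M_3)$ places the composite in a space of the shape $X((z_3))$-valued series in $z_1,z_2$ with $(z_1-z_2)^{-1}$ inverted, the outer composability of $(M_1,M_2)$ places it in $X((z_1))$-valued series in $z_2,z_3$ with $(z_2-z_3)^{-1}$ inverted, and your third step applies composability of $(M_1,M_3)$ only after a locality swap, so it constrains a \emph{different} composite (the $\sigma=(23)$ one). There is no single ambient Laurent completion in which all three of these sit as honest subspaces so that you can ``intersect''; making the intersection rigorous forces you back through the very locality comparisons you were hoping to avoid.

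The paper's route is shorter and sidesteps this entirely. Rather than assembling three single-difference factorizations, one multiplies each $x_\sigma$ by $(z_1-z_2)^N(z_1-z_3)^N(z_2-z_3)^N$ for large $N$ and uses the pairwise locality of Lemma~\ref{lem:composable-pairs-give-locality} (chained over adjacent transpositions, exactly as you do for scalar multiplicity) to conclude that the resulting series are independent of $\sigma$ up to scalar. Now the point is that $x_\sigma$ lives in $X((z_{\sigma(3)}))((z_{\sigma(2)}))((z_{\sigma(1)}))$, so the innermost variable has pole order bounded below; since after clearing differences the element is the \emph{same} across all $\sigma$, every $z_i$ has bounded pole order, and the common element lies in $X[[z_1,z_2,z_3]][z_1^{-1},z_2^{-1},z_3^{-1}]$. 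Dividing back gives the desired factorization for every $\sigma$ at once. This is precisely the bootstrapping from pairwise to $n$-fold locality you cite from Lemma~\ref{lem:vafacts}(3), and it makes your acknowledged obstacle evaporate: the ``coherence of expansions'' is exactly the $\sigma$-independence you already need for scalar multiplicity.
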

\begin{proof}
The isomorphisms between the $X_\sigma$ follow from repeatedly applying Lemma \ref{lem:composable-pairs-give-associativity-and-commutativity}.

For each permutation $\sigma$, we consider an element $p_\sigma = m_{\sigma(1)} \otimes m_{\sigma(2)} \otimes m_{\sigma(3)} \otimes p \in M_{\sigma(1)} \otimes M_{\sigma(2)} \otimes M_{\sigma(3)} \otimes P$, and let $x_\sigma = m^{M_{\sigma(1)}, M_{\sigma(2)} \boxtimes (M_{\sigma(3)} \boxtimes P)}_{z_{\sigma(3)}} \circ (1 \otimes m^{M_{\sigma(2)}, M_{\sigma(3)} \boxtimes P}_{z_{\sigma(2)}}) \circ (1 \otimes 1 \otimes m^{M_{\sigma(3)},P}_{z_{\sigma(1)}})(p_\sigma)$

Locality, as in Lemma \ref{lem:composable-pairs-give-locality}, implies that for a sufficiently large positive integer $N$, the series $(z_1-z_2)^N(z_1-z_3)^N(z_2-z_3)^Nx_\sigma$ are independent of $\sigma$ up to scalar multiplication.  In particular, poles in the variables $z_1, z_2, z_3$ are uniformly bounded in order.  This implies the existence of the factorization.
\end{proof}

\begin{lem}
The fusion product of two universally composable quasi-simple $\cC$-currents is universally composable.
\end{lem}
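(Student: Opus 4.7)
The plan is to verify the three axioms of a composable pair for $(M_1 \boxtimes M_2, N)$, for an arbitrary simple $\cC$-current $N$. The first two axioms follow formally from composability of suitable pairs involving $M_1$ and $M_2$, while the third requires a reduction to the four-input setting of Lemma \ref{lem:4-input-intertwining-operators}.

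For axiom~1, I observe that $(M_1 \boxtimes M_2) \boxtimes N \cong M_1 \boxtimes (M_2 \boxtimes N)$ by composability of $(M_1, M_2)$ with input $N$; the right side is a simple $\cC$-current because $M_2 \boxtimes N$ is one (by universal composability of $M_2$) and $M_1$ is universally composable with it. For axiom~2, given any $P$ in $\cC$, I chain the associativity isomorphisms supplied by the composable pairs $(M_1, M_2)$, $(M_2, N)$, $(M_1, M_2 \boxtimes N)$, and $(M_1, M_2)$ applied in succession to get
\[
(M_1 \boxtimes M_2) \boxtimes (N \boxtimes P) \cong M_1 \boxtimes (M_2 \boxtimes (N \boxtimes P)) \cong M_1 \boxtimes ((M_2 \boxtimes N) \boxtimes P) \cong (M_1 \boxtimes (M_2 \boxtimes N)) \boxtimes P \cong ((M_1 \boxtimes M_2) \boxtimes N) \boxtimes P.
\]

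Axiom~3 is the technical core. Fix $P$ in $\cC$, write $Q = M_1 \boxtimes (M_2 \boxtimes (N \boxtimes P))$, and set
\[ \Phi_1 = m^{M_1 \boxtimes M_2, N \boxtimes P}_z \circ (1 \otimes m^{N, P}_w), \qquad \Phi_2 = m^{(M_1 \boxtimes M_2) \boxtimes N, P}_w \circ (m^{M_1 \boxtimes M_2, N}_{z-w} \otimes 1). \]
I need to show that $\Phi_1$ and $\Phi_2$ factor through $Q[[z, w]][z^{-1}, w^{-1}, (z-w)^{-1}]$ and are proportional there. My plan is to precompose both with $m^{M_1, M_2}_t \otimes 1 \otimes 1 : M_1 \otimes M_2 \otimes N \otimes P \to ((M_1 \boxtimes M_2) \otimes N \otimes P)((t))$, reducing to a four-input statement. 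Using composability of $(M_1, M_2)$ (with trailing module $N \boxtimes P$ in one case, and with trailing modules $N$ then $P$ in the other), each precomposed $\Phi_i$ rewrites as a scalar multiple of a three-fold composite of intertwining operators among $M_1, M_2, N, P$. Then Lemma \ref{lem:4-input-intertwining-operators} applied with $M_3 = N$ identifies all such four-input composites, in any input ordering, as scalar multiples of a single map factoring through a common space of the form $Q[[t, z, w]][t^{-1}, z^{-1}, w^{-1}, (z-t)^{-1}, (z-w)^{-1}, (w-t)^{-1}]$. To descend back to $\Phi_1, \Phi_2$ themselves, I use that the linear span of the coefficients of $m^{M_1, M_2}_t$, as its inputs in $M_1 \otimes M_2$ vary, is a nonzero $V$-submodule of $M_1 \boxtimes M_2$ and hence equals it by simplicity; extracting appropriate coefficients in $t$ from the four-input identity then transfers both the pole-order bounds in $z, w, z-w$ and the scalar of proportionality to $\Phi_1$ and $\Phi_2$.

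The main obstacle will be controlling the auxiliary variable $t$: verifying, via the star-of-embeddings diagram of Lemma \ref{lem:star-of-embeddings}, that the four-input images lie in iterated Laurent spaces whose reorderings allow extraction of $t$-coefficients without inflating pole orders in $z$, $w$, or $z-w$, and checking that the scalar of proportionality between the two four-input composites does not depend on the $M_1 \otimes M_2$ input, so that a single universal constant relates $\Phi_1$ to $\Phi_2$. Both facts rely on the one-dimensionality of the intertwining operator spaces built into the definition of a composable pair, but the careful variable bookkeeping is where the work lies.
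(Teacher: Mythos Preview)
Your overall strategy matches the paper's: axioms 1 and 2 are handled exactly as you describe, and for axiom 3 you precompose with $m^{M_1,M_2}$, reduce to the four-input situation of Lemma \ref{lem:4-input-intertwining-operators}, and descend via irreducibility of $M_1 \boxtimes M_2$.

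The one technical slip is your choice of a \emph{fresh} variable $t$ for the precomposition. With $\Phi_1, \Phi_2$ written in variables $z,w$, the composability axiom for $(M_1,M_2)$ only lets you rewrite $m^{M_1 \boxtimes M_2, X}_z \circ (m^{M_1,M_2}_{s-z} \otimes 1)$ as a multiple of $m^{M_1, M_2 \boxtimes X}_s \circ (1 \otimes m^{M_2,X}_z)$ when the inner variable is a \emph{difference}, not an unrelated $t$. The paper (using $w,t$ for your $z,w$) therefore precomposes with $m^{M_1,M_2}_{z-w}$, so that $\hat m^1$ and $\hat m^2$ land in $M'_{(ij)(k\ell)}$ and $M'_{((ij)k)\ell}$, which share the common intermediate $M'_{[(ij)k]\ell} = M'[[w,t]][w^{-1},t^{-1},(w-t)^{-1}]((z-w))$ in the star of embeddings. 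Extracting coefficients of $(z-w)^n$ from this space then cleanly transfers the pole bounds. With a free $t$ you would pick up spurious poles at $z-t$ and $w-t$, and expanding those in $t$ would destroy the uniform bounds in $z$ and $w$ that you need. Once you replace $t$ by the appropriate difference, your argument and the paper's coincide.
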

\begin{proof}
Let $M_1$, $M_2$, and $M_3$ be quasi-simple $\cC$-currents, and suppose $M_1$ and $M_2$ are universally composable.  We wish to show that $M_1 \boxtimes M_2$ and $M_3$ form a composable pair.

For the first condition, we need to show that $(M_1 \boxtimes M_2) \boxtimes M_3$ is a quasi-simple $\cC$-current, and by Lemma \ref{lem:composable-pairs-give-associativity-and-commutativity}, it suffices to show that $M_1 \boxtimes (M_2 \boxtimes M_3)$ is a quasi-simple $\cC$-current.  Since $M_2$ is universally composable, $M_2 \boxtimes M_3$ is a quasi-simple $\cC$-current, and since $M_1$ is universally composable, $M_1 \boxtimes (M_2 \boxtimes M_3)$ is a quasi-simple $\cC$-current.

For the second condition, we alternately use the universally composable properties of $M_1$ and $M_2$ to obtain the following isomorphisms:
\[ \begin{aligned}
(M_1 \boxtimes M_2) \boxtimes (M_3 \boxtimes P) &\cong M_1 \boxtimes (M_2 \boxtimes (M_3 \boxtimes P)) \\
&\cong M_1 \boxtimes ((M_2 \boxtimes M_3) \boxtimes P) \\
&\cong (M_1 \boxtimes (M_2 \boxtimes M_3)) \boxtimes P \\
&\cong ((M_1 \boxtimes M_2) \boxtimes M_3) \boxtimes P
\end{aligned} \]

For the last condition, we shall write $M' = ((M_1 \boxtimes M_2) \boxtimes M_3) \boxtimes P$ for convenience.  We need to show that the composites $m^1 = m^{M_1 \boxtimes M_2, M_3 \boxtimes P}_w \circ (1 \otimes m^{M_3,P}_t)$ and $m^2 = m^{(M_1 \boxtimes M_2)  \boxtimes M_3,P}_t \circ (m^{M_1 \boxtimes M_2,M_3}_{w-t} \otimes 1)$ factor through the inclusions from $M'[[w,t]][w^{-1},t^{-1},(w-t)^{-1}]$ to $M'((w))((t))$ and $M'((t))((w-t))$, and that the two maps $(M_1 \boxtimes M_2) \otimes M_3 \otimes P \to M'[[w,t]][w^{-1},t^{-1},(w-t)^{-1}]$ are scalar multiples of each other.  By the irreducibility of $M_1 \boxtimes M_2$, it suffices to show this for all coefficients of $(z-w)^n$ in $m^{M_1,M_2}_{z-w} \otimes 1 \otimes 1$ applied to $M_1 \otimes M_2 \otimes M_3 \otimes P$.

We use the notation of Lemma \ref{lem:star-of-embeddings}, but with $M'$ in place of $V$.  Pre-composing both $m^1$ and $m^2$ with $m^{M_1,M_2}_{z-w}$, we obtain a map $\hat{m}^1$ to $M'_{(ij)(k\ell)}$ and a map $\hat{m}^2$ to $M'_{((ij)k)\ell}$.  It therefore suffices to show that both factor through maps to $M'_{[(ij)k]\ell}$ that are scalar multiples of each other.

By Lemma \ref{lem:4-input-intertwining-operators}, the composite $\hat{m}^3 = m^{M_1, M_2 \boxtimes (M_3 \boxtimes P)}_z \circ (1 \otimes m^{M_2,M_3 \boxtimes P}_w) \circ (1 \otimes 1 \otimes m^{M_3,P}_t)$ factors through $M'_{[ijk]\ell}$.  By repeatedly applying Lemma \ref{lem:composable-pairs-give-locality}, we see that $\hat{m}^1$ and $\hat{m}^2$ also factor through maps to $M'_{[ijk]\ell}$, and are proportional to the corresponding map that factors $\hat{m}^3$.  Using the embedding $M'_{[ijk]\ell} \to M'_{[(ij)k]\ell}$, we see that $\hat{m}^1$ and $\hat{m}_2$ factor through maps to $M'_{[(ij)k]\ell}$ that are proportional.
\end{proof}

\begin{defn}
A $\cT$-vertex algebra $V$ is a $\cC$-unit if it is an object in $\cC$, and if it satisfies $V \boxtimes M \cong M$ for any object $M$ in $\cC$.  In particular, it is a quasi-simple $\cC$-current.
\end{defn}

\begin{prop}
Suppose $V$ is a $\cC$-unit.  Then $Curr_{\cC}(V)$ is a $\bC$-linear symmetric monoidal category under fusion product $(M, N) \mapsto M \boxtimes N$.
\end{prop}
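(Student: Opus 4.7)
The plan is to verify in turn each piece of structure of a symmetric monoidal category: the bifunctor, the unit, the associator and unitors, the braiding, and finally the coherence axioms (pentagon, triangle, hexagons, and symmetry of braiding). The key general principle to exploit throughout is that every object of $Curr_{\cC}(V)$ is a simple $V$-module, so by Schur's lemma every $\Hom$-space in the category is at most one-dimensional over $\bC$, which forces all diagrams of isomorphisms between simple objects to commute up to a scalar; the content of the proof is then to check those scalars are $1$. In particular, $\bC$-linearity of $\boxtimes$ on morphisms is automatic, and functoriality in each slot is read off from the universal property $\Hom_V(M \boxtimes N, Y) \cong I_\cT \binom{Y}{M,N}$.

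Next I would set up unit and associator. The hypothesis that $V$ is a $\cC$-unit gives $V \boxtimes M \cong M$; the left unitor $\lambda_M$ is the isomorphism classifying the action map $act^M_z$ under the universal property, while the right unitor $\rho_M$ classifies $act^{M,*}_z$ (Lemma \ref{lem:skew-sym}). The associator $\alpha_{M,N,P}$ is furnished by clause~(2) of the definition of composable pair (together with the fact, just established in the preceding lemma, that $M \boxtimes N$ is again universally composable): both composites $m^{M,N\boxtimes P}_z \circ (1 \otimes m^{N,P}_w)$ and $m^{M\boxtimes N, P}_w \circ (m^{M,N}_{z-w} \otimes 1)$ factor through a common map into $X[[z,w]][z^{-1},w^{-1},(z-w)^{-1}]$, and this shared factorization determines the isomorphism $(M \boxtimes N) \boxtimes P \simto M \boxtimes (N \boxtimes P)$ via the universal property.

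For the braiding, the intertwining operator $m^{M,N,*}_z \colon N \otimes M \to (M \boxtimes N)((z))$ obtained from Lemma \ref{lem:skew-sym} induces, via the universal property of $N \boxtimes M$, an isomorphism $c_{M,N} \colon M \boxtimes N \to N \boxtimes M$. Symmetry $c_{N,M} \circ c_{M,N} = \mathrm{id}_{M \boxtimes N}$ is the observation that the skew-symmetry involution is an involution on integral-weight intertwining operators: $(I^*)^*_z(u,v) = e^{zL(-1)} I^*_{-z}(v,u) = e^{zL(-1)} e^{-zL(-1)} I_z(u,v) = I_z(u,v)$, so two applications recover the identity intertwining operator, which corresponds under the universal property to the identity morphism.

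The hard part is coherence, i.e.\ the pentagon and the two hexagons. The plan is to reduce all of these to the single statement that every composite of the relevant intertwining operators applied to $M_1 \otimes M_2 \otimes M_3 \otimes P$ (respectively $M_1 \otimes M_2 \otimes P$ for the hexagons) factors through a common ``master'' subspace of the form $M'[[z_1,z_2,z_3]][z_i^{-1}, (z_i-z_j)^{-1}]$ inside $M'((z_{\sigma(3)}))((z_{\sigma(2)}))((z_{\sigma(1)}))$. Lemma \ref{lem:4-input-intertwining-operators} gives precisely this for the pentagon, and an analogous argument combining Lemmas \ref{lem:composable-pairs-give-associativity-and-commutativity} and \ref{lem:composable-pairs-give-locality} handles the hexagons. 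Because the $\Hom$-spaces are one-dimensional, each side of a pentagon or hexagon diagram is determined by the scalar by which it acts on a single nonzero vector in the master space; the coincidence of composites in the master space forces these scalars to agree. The triangle axiom is similarly immediate from the normalization $m^{0,i}_z = act_z$, $m^{i,0}_z = act^*_z$ in the associativity datum. The genuine subtlety is purely bookkeeping: one must check that the choices of representatives used to define $\alpha$ and $c$ are consistent across all triples of objects, which is why the ``universally composable'' hypothesis is indispensable.
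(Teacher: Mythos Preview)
Your proposal is correct and follows essentially the same approach as the paper: the associator is the proportionality between the two three-input composites, the unitors come from $act_z$ and $act^*_z$, the commutor from skew-symmetry, and coherence is reduced to the common master space furnished by Lemma~\ref{lem:4-input-intertwining-operators} together with locality. Your explicit $(I^*)^* = I$ computation is a nice unpacking of what the paper means by ``automatically symmetric, since intertwining operators are integer weight''; the only minor divergence is that for the hexagons the paper again invokes Lemma~\ref{lem:4-input-intertwining-operators} (compare the earlier Lemma~\ref{lem:hexagon}, where a unit is inserted as a fourth input so that all reorderings sit inside one ambient space) rather than working purely at the three-input level with Lemmas~\ref{lem:composable-pairs-give-associativity-and-commutativity} and~\ref{lem:composable-pairs-give-locality} as you suggest, but either route works.
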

\begin{proof}
The associator $a_{MNP}: (M \boxtimes N) \boxtimes P \to M \boxtimes (N \boxtimes P)$ is the proportionality constant between the three-input intertwining operators given by composites.

By combining Lemma \ref{lem:4-input-intertwining-operators} and Lemma \ref{lem:composable-pairs-give-locality}, the compositions of intertwining operators given in the pentagon diagram are proportional, so the associator satisfies the pentagon axiom.

The unit structures $\ell_M: V \boxtimes M \to M$ and $r_M: M \boxtimes V \to M$ are canonically given by the action of $V$, and the intertwining operator induced by skew-symmetry.

The identity $r_M \otimes 1 = (1 \otimes \ell_N) \circ a_{MVN}$ is straightforward.

The commutor is the proportionality constant between $m^{M,N}_z$ and $m^{N,M,*}_z$.  This is automatically symmetric, since intertwining operators are integer weight.

By  combining Lemma \ref{lem:4-input-intertwining-operators} and Lemma \ref{lem:composable-pairs-give-locality}, the compositions of intertwining operators given in the hexagon diagrams are proportional, so the commutor and associator satisfy the hexagon axioms.
\end{proof}

From now on, we assume $V$ is a $\cC$-unit.

\begin{defn}
An inverse of a universally composable quasi-simple $\cC$-current $M$ is a universally composable quasi-simple $\cC$-current $N$, such that $M \boxtimes N \cong V$ for $V$ a $\cC$-unit.  If $M$ admits an inverse, we say that $M$ is invertible.  We define the Picard groupoid $\uPic_{\cC}(V)$ to be the category of invertible universally composable quasi-simple $\cC$-currents, with morphisms given by isomorphisms in $\cC$.
\end{defn}


\begin{lem}
$\uPic_{\cC}(V)$ is a symmetric monoidal subcategory of $Curr_{\cC}(V)$.
\end{lem}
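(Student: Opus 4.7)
The plan is to verify the three conditions needed for a full subcategory of a symmetric monoidal category to inherit the symmetric monoidal structure: (i) the unit object lies in $\uPic_{\cC}(V)$; (ii) the fusion product of two objects of $\uPic_{\cC}(V)$ is again in $\uPic_{\cC}(V)$; and (iii) the structural isomorphisms (associator, unitors, commutor) of $Curr_{\cC}(V)$ descend to $\uPic_{\cC}(V)$.

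For (i), the $\cC$-unit $V$ satisfies $V \boxtimes V \cong V$ by the defining property of a $\cC$-unit, so $V$ is its own inverse and lies in $\uPic_{\cC}(V)$. For (ii), suppose $M, N \in \uPic_{\cC}(V)$, with inverses $M'$ and $N'$, respectively. I claim $N' \boxtimes M'$ is an inverse of $M \boxtimes N$. Using the associator (which exists in $Curr_{\cC}(V)$ by the previous proposition) together with the unit isomorphisms, there is a chain of isomorphisms
\[ (M \boxtimes N) \boxtimes (N' \boxtimes M') \cong M \boxtimes (N \boxtimes (N' \boxtimes M')) \cong M \boxtimes ((N \boxtimes N') \boxtimes M') \cong M \boxtimes (V \boxtimes M') \cong M \boxtimes M' \cong V, \]
so $M \boxtimes N$ is invertible. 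Both $M \boxtimes N$ and $N' \boxtimes M'$ are universally composable by the preceding lemma on closure of universal composability under fusion, hence they lie in $\uPic_{\cC}(V)$.

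For (iii), the morphisms of $\uPic_{\cC}(V)$ are by definition the isomorphisms in $\cC$. The associator $a_{MNP}$, the left and right unitors $\ell_M, r_M$, and the commutor $c_{MN}$ of $Curr_{\cC}(V)$ are all isomorphisms (they are nonzero scalars between one-dimensional spaces of composable intertwining operators, or canonical module identifications), so their restrictions to objects of $\uPic_{\cC}(V)$ are morphisms in $\uPic_{\cC}(V)$. The pentagon and hexagon axioms hold automatically because they already hold in $Curr_{\cC}(V)$.

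The main (very mild) subtlety is only in step (ii), namely checking that the inverse of $M \boxtimes N$ is genuinely universally composable and not just an abstract inverse in some larger category; this is handled by appealing to the previously established fact that fusion products of universally composable currents are universally composable. Everything else is a direct transfer from the ambient symmetric monoidal structure on $Curr_{\cC}(V)$.
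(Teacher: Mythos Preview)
Your proof is correct and follows the same approach as the paper's own proof, which is very terse: the paper merely says ``It suffices to show that the fusion product of two invertible objects is invertible,'' and then notes that associativity makes $M^{-1}\boxtimes N^{-1}$ an inverse of $M\boxtimes N$. Your version is more explicit, spelling out the presence of the unit, the full chain of associator and unitor isomorphisms, and the fact that the structural morphisms are isomorphisms and hence lie in the groupoid; the paper leaves these as evident.
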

\begin{proof}
It suffices to show that the fusion product of two invertible objects is invertible.  If $M$ and $N$ are invertible universally composable $\cC$-currents, then the associativity property of composable pairs implies $M \boxtimes N$ and $M^{-1} \boxtimes N^{-1}$ are inverses.
\end{proof}

\begin{rem}
Note that this Picard groupoid is not necessarily a strictly commutative Picard category in the sense of \cite{SGA4} Exp. XVIII Definition 1.4.2, since the commutor isomorphism on $M \boxtimes M$ is not necessarily identity.
\end{rem}

\begin{lem}
The isomorphism classes in $\uPic_{\cC}(V)$ form a set, and the monoidal structure induces an abelian group structure on this set.
\end{lem}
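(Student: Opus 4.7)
The plan is to split the statement into the set-theoretic claim and the group-theoretic claim, and to reduce each to results already established in the paper.

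First I would address the set-theoretic claim. The earlier lemma in this subsection shows that isomorphism classes of objects in $\cC$ form a set, using the Frenkel-Zhu enveloping associative algebra to bound irreducible modules by a subquotient of a power set. Since $\uPic_{\cC}(V)$ is a full subcategory of $\cC$ (its objects are certain simple $\cC$-currents), its isomorphism classes embed into the isomorphism classes of $\cC$, and hence form a set.

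Next I would derive the abelian group structure from the symmetric monoidal structure on $\uPic_{\cC}(V)$ established in the preceding lemma. Let $G$ denote the set of isomorphism classes, with operation $[M]\cdot[N]=[M\boxtimes N]$; well-definedness follows because fusion product of composable pairs is defined up to canonical isomorphism via the universal property characterizing $M\boxtimes N$. Associativity of $\cdot$ follows from the existence of the associator $a_{MNP}$, commutativity follows from the commutor (which is an honest isomorphism in the symmetric monoidal structure, and passes to equality on isomorphism classes), and the $\cC$-unit $V$ provides a two-sided identity through $\ell_M$ and $r_M$. Finally, the definition of $\uPic_{\cC}(V)$ requires each object to admit an inverse, i.e., an object $N$ with $M\boxtimes N\cong V$, so every class in $G$ has an inverse.

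The proof is essentially bookkeeping: the genuine content lies in the preceding lemma establishing that $\uPic_{\cC}(V)$ is a symmetric monoidal subcategory of $Curr_{\cC}(V)$, and in the earlier set-theoretic lemma. The only point requiring mild care is that passing from an associator (an isomorphism) to associativity (an equality of classes) is automatic here, and similarly for the commutor and unitors, so no coherence issues arise. Thus there is no real obstacle; the lemma follows directly once the symmetric monoidal structure and the smallness of $\cC$ are invoked.
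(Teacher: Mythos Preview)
Your proposal is correct and follows essentially the same approach as the paper: the set-theoretic claim is reduced to the earlier lemma bounding isomorphism classes in $\cC$, and the group structure is extracted from the symmetric monoidal structure on $\uPic_{\cC}(V)$ together with invertibility of its objects. The paper's proof is just a terser version of yours, phrasing the second part as ``$\pi_0$ of an essentially small symmetric monoidal category is an abelian monoid, and invertibility of objects implies invertibility of elements of the monoid.''
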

\begin{proof}
The isomorphism classes in $\uPic_{\cC}(V)$ are a subclass of the set of isomorphism classes in $\cC$, so they form a set.  $\pi_0$ of an essentially small symmetric monoidal category is an abelian monoid, and invertibility of objects implies invertibility of elements of the monoid.
\end{proof}

\begin{defn}
We call the group of isomorphism classes in $\uPic_{\cC}(V)$ the Picard group of $V$ in $\cC$, and write $\Pic_{\cC}(V)$.  Given a homomorphism $A \to \Pic_{\cC}(V)$ of abelian groups, we write $M_i$ for a module in $\cT$ in the isomorphism class defined by $\pi(i)$.  We say that a homomorphism $A \to \Pic_{\cC}(V)$ is $\cT$-compatible if the set of modules $\{ M_i \}_{i \in A}$ is $\cT$-compatible (in the sense of Definition \ref{defn:T-compatible}).
\end{defn}

\begin{prop} \label{prop:simple-currents-commutativity-datum}
Let $\pi: A \to \Pic_{\cC}(V)$ be a homomorphism of abelian groups.  Then the modules $\{ M_i \}_{i \in A}$ admit a canonical $\cT$-commutativity datum.
\end{prop}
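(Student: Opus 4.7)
The plan is to take the spaces of intertwining operators themselves as the commutativity datum. Set $\cI_{i,j}^{i+j} := I_\cT\binom{M_{i+j}}{M_i, M_j}$. Since $\pi$ is a homomorphism and each $M_i$ is an invertible universally composable simple $\cC$-current, we have a canonical isomorphism class $M_i \boxtimes M_j \cong M_{i+j}$ in $\cC$; combined with the defining property of simple $\cC$-currents, this gives $\cI_{i,j}^{i+j} \cong \Hom_V(M_{i+j}, M_{i+j})$, which is one-dimensional because $M_{i+j}$ is simple. Canonicity of the assignment $(i,j) \mapsto \cI_{i,j}^{i+j}$ is immediate, since nothing beyond the objects $M_i$ and their category-theoretic $\Hom$-spaces has been used.

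Next I would verify the four axioms of a one-dimensional commutativity datum. For the three-input space $\cI_{i,j,k}^{i+j+k}$, use that $(M_i, M_j)$ is a composable pair (since both are universally composable): the definition of composable pair directly gives a one-dimensional space of maps $M_i \otimes M_j \otimes M_k \to M_{i+j+k}[[z,w]][z^{-1}, w^{-1}, (z-w)^{-1}]$ through which both composition orderings factor, yielding the two required isomorphisms simultaneously. For $\cI_{i,j,k,\ell}^{i+j+k+\ell}$, apply Lemma \ref{lem:4-input-intertwining-operators} with $P = M_\ell$: since $M_i, M_j, M_k$ are universally composable and $M_\ell$ is a simple $\cC$-current, the lemma produces a one-dimensional space of maps factoring through $M_{i+j+k+\ell}[[z,w,t]][z^{-1},w^{-1},t^{-1},(z-w)^{-1},(z-t)^{-1},(w-t)^{-1}]$. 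The normalization conditions are standard: since $V = M_0$ is a $\cC$-unit, $V \boxtimes M_i \cong M_i$, so $\cI_{0,i}^i$ is one-dimensional and spanned by the module action map $act^i_z$; by Lemma \ref{lem:skew-sym}, $act^{i,*}_z$ is a nonzero $\cT$-intertwining operator $M_i \otimes V \to M_i((z))$, hence spans the one-dimensional space $\cI_{i,0}^i$.

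For the commutativity (skew-symmetry) condition defining the datum, take any nonzero $m^{i,j}_z \in \cI_{i,j}^{i+j}$. By Lemma \ref{lem:skew-sym}, the map $m^{i,j,*}_z$ defined by $m^{i,j,*}_z(v \otimes u) = e^{z L(-1)} m^{i,j}_{-z}(u \otimes v)$ is a nonzero $\cT$-intertwining operator $M_j \otimes M_i \to M_{j+i}((z))$ (using $M_{j+i} = M_{i+j}$), and hence lies in the one-dimensional space $\cI_{j,i}^{j+i}$, which is exactly the compatibility required.

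The main obstacle is ensuring that the three- and four-fold composite spaces are genuinely one-dimensional and that compositions in all orders factor through the common meromorphic space; this is where universal composability of invertible simple $\cC$-currents is essential, and it is the content of Lemma \ref{lem:4-input-intertwining-operators} together with the associativity and locality consequences of Lemmas \ref{lem:composable-pairs-give-associativity-and-commutativity} and \ref{lem:composable-pairs-give-locality}. Once those structural lemmas are invoked, the remaining verifications are formal, and canonicity of the whole datum is transparent from the construction.
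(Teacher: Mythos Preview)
Your proposal is correct and follows essentially the same approach as the paper's own proof, which is extremely terse: the paper simply states that the intertwining-operator spaces are spanned by composites, invokes Lemma~\ref{lem:4-input-intertwining-operators} for the associativity datum, and declares skew-symmetry ``more or less automatic.'' You have unpacked exactly these three points---one-dimensionality via the simple-current property, the three- and four-fold factorizations via composability and Lemma~\ref{lem:4-input-intertwining-operators}, and the commutativity condition via Lemma~\ref{lem:skew-sym}---and also made the normalization conditions explicit using the standing assumption that $V$ is a $\cC$-unit, which the paper leaves implicit.
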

\begin{proof}
Our spaces of intertwining operators are spanned by composites.  Lemma \ref{lem:4-input-intertwining-operators} gives us our associativity datum.  Skew-symmetry is more or less automatic.  
\end{proof}

\begin{defn}
Let $\pi: A \to \Pic_{\cC}(V)$ be a homomorphism of abelian groups.  Any $\cT$-vertex algebra structure on $\bigoplus_{i \in A} M_i$ whose multiplication restricts to nonzero $\cT$-intertwining operators $M_i \otimes M_j \to M_{i+j}((z))$ (which is unique up to isomorphism if it exists) is called the quasi-simple $\cC$-current extension of $V$ by $\pi$.
\end{defn}

\begin{defn}
A quasi-simple $\cC$-current $M$ is even if some (equivalently, any) nonzero intertwining operator $M \otimes M \to (M \boxtimes M)((z))$ is even.  A homomorphism $A \to \Pic_{\cC}(V)$ is even if the image is spanned by even objects.
\end{defn}

\begin{rem}
At our current level of generality, it seems that the even objects in the Picard groupoid of $V$ do not necessarily form a monoidal subcategory.  At least, if $\Pic_{\cC}(V) \cong \bZ/2\bZ \times\bZ/2\bZ$, I was unable to rule out the possibility that the quadratic form $\Omega$ is $-1$ on $(1,1)$ and $1$ on all other elements.
\end{rem}

\begin{prop} \label{prop:even-homs-yield-vertex-algebras}
For any even homomorphism $\pi: A \to \Pic_{\cC}(V)$, the canonical commutativity datum given in Proposition \ref{prop:simple-currents-commutativity-datum} is even.  In particular, if $\pi$ is $\cT$-compatible, then there exists a quasi-simple $\cC$-current extension $W_A = \bigoplus_{i \in A} M_i$ of $V$ by $\pi$, and its isomorphism type is uniquely determined.
\end{prop}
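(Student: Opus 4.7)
The plan is to reduce this to Proposition \ref{prop:existence} by verifying that the canonical commutativity datum from Proposition \ref{prop:simple-currents-commutativity-datum} is even. Once evenness is established, the $\cT$-compatibility of $\pi$ together with Proposition \ref{prop:existence} produces the $\cT$-vertex algebra structure on $W_A = \bigoplus_{i \in A} M_i$, and uniqueness up to isomorphism follows from the final clause of Theorem \ref{thm:obstruction} combined with the vanishing of $H^2_{ab}(A,\bC^\times)$ (Lemma \ref{lem:degree-2-abelian-cohomology}). The resulting multiplication restricts to nonzero elements of each $\cI_{i,j}^{i+j}$ by construction, so $W_A$ is a simple $\cC$-current extension in the sense of the definition above.

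For the evenness step, I would fix a normalized choice of nonzero $m^{i,j}_z \in \cI_{i,j}^{i+j}$, set $Q(i) := \Omega(i,i)$, and invoke Lemma \ref{lem:evenness} together with Lemma \ref{lem:cocycle-trace} to conclude that $Q$ is a $\{\pm 1\}$-valued quadratic form on $A$. By Lemma \ref{lem:forms-with-restricted-values}, $Q$ is trivial on $2A$ and constant on $2A$-cosets, so it descends to a quadratic form $\bar{Q}$ on the elementary abelian 2-group $A/2A$, and it suffices to show $\bar{Q} \equiv 1$. Reading the definition of ``even homomorphism'' in its natural strong form -- that each $M_i$ lying in the image of $\pi$ is itself an even simple current -- we obtain $Q(i) = 1$ for every $i \in A$ directly from Lemma \ref{lem:evenness} applied to the self-intertwiner $m^{i,i}_z$, so $\bar{Q}$ vanishes.

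The hardest step, and the one flagged in the remark immediately preceding the proposition, is handling the weaker reading in which one only knows that the image of $\pi$ in $\Pic_\cC(V)$ is generated (as a subgroup) by even objects. Under that reading $\bar{Q}$ is only known to vanish on a generating set of $A/2A$, and on $(\bZ/2)^2$ there is a nontrivial $\{\pm 1\}$-valued quadratic form that vanishes on the standard basis but takes value $-1$ on the sum. To close that gap one would have to compute the polarization $B(g,h) = \bar{Q}(g+h)/(\bar{Q}(g)\bar{Q}(h))$ in terms of a braiding on $M_g \boxtimes M_h$ and exploit the four-fold associativity provided by Lemma \ref{lem:4-input-intertwining-operators}, applied to $M_g \otimes M_h \otimes M_g \otimes M_h$, to force $B(g,h) = 1$ whenever $M_g$ and $M_h$ are both even. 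I expect this braiding--polarization comparison to be the technical crux of the weaker formulation, whereas under the strong reading the argument closes immediately through Proposition \ref{prop:existence}.
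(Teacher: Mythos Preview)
Your approach is correct and matches the paper's two-sentence proof: the evenness hypothesis on $\pi$ makes the commutativity datum even (each $m^{i,i}_z$ is even by the definition of even simple current, so $\Omega(i,i)=1$ via Lemma \ref{lem:evenness}), and Proposition \ref{prop:existence} then supplies both existence and uniqueness directly---your separate invocations of Theorem \ref{thm:obstruction} and Lemma \ref{lem:degree-2-abelian-cohomology} for uniqueness, and the detour through the quadratic form on $A/2A$, are redundant under the strong reading you end up using. Your careful distinction between the strong and weak readings of ``image spanned by even objects'' goes beyond what the paper provides; the proof there implicitly adopts the strong reading, consistent with the preceding remark's acknowledgment that even objects need not be closed under $\boxtimes$.
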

\begin{proof}
The condition on $\pi$ forces this commutativity datum to be even.  By Proposition \ref{prop:existence}, $W_A$ admits a $\cT$-vertex algebra structure, whose isomorphism type is uniquely determined.
\end{proof}

\begin{lem}
Let $M$ be an object in $\cC$.  Then the subcategory $\uStab_{\cC}(M)$ of $\uPic_{\cC}(V)$ whose objects $M_i$ satisfy $M_i \boxtimes M \cong M$ forms a Picard subgroupoid.  For any module $M_i$ in $\uPic_{\cC}(V)$, the isomorphism type of $M_i \boxtimes M$ depends only on the coset of the corresponding subgroup $\Stab_{\cC}(M)$ in $\Pic_{\cC}(V)$.
\end{lem}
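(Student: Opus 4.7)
The plan is to verify the two claims directly from the group structure on $\Pic_{\cC}(V)$ and the associativity of the fusion product on universally composable simple currents. Throughout, I will use that every object in $\uPic_{\cC}(V)$ is invertible and that any pair of such objects is composable, so the associator isomorphisms $a_{M_iM_jN}\colon (M_i \boxtimes M_j) \boxtimes N \simto M_i \boxtimes (M_j \boxtimes N)$ exist whenever $M_i,M_j$ are invertible, by Lemma \ref{lem:composable-pairs-give-associativity-and-commutativity}.

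First I would check that $\uStab_{\cC}(M)$ is closed under the symmetric monoidal structure. The $\cC$-unit $V$ lies in $\uStab_{\cC}(M)$ since $V \boxtimes M \cong M$ by definition of $\cC$-unit. If $M_i,M_j \in \uStab_{\cC}(M)$, then associativity yields
\[ (M_i \boxtimes M_j) \boxtimes M \;\cong\; M_i \boxtimes (M_j \boxtimes M) \;\cong\; M_i \boxtimes M \;\cong\; M, \]
so $M_i \boxtimes M_j \in \uStab_{\cC}(M)$. For closure under inverses, if $M_i \in \uStab_{\cC}(M)$ and $M_i^{-1}$ denotes its inverse in $\uPic_{\cC}(V)$, then
\[ M_i^{-1} \boxtimes M \;\cong\; M_i^{-1} \boxtimes (M_i \boxtimes M) \;\cong\; (M_i^{-1} \boxtimes M_i) \boxtimes M \;\cong\; V \boxtimes M \;\cong\; M, \]
so $M_i^{-1} \in \uStab_{\cC}(M)$. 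Thus $\uStab_{\cC}(M)$ is a full Picard subgroupoid of $\uPic_{\cC}(V)$.

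For the second claim, suppose $M_i$ and $M_j$ represent the same coset of $\Stab_{\cC}(M)$ in $\Pic_{\cC}(V)$, so that $M_i \cong M_j \boxtimes M_k$ for some $M_k \in \uStab_{\cC}(M)$. Then associativity gives
\[ M_i \boxtimes M \;\cong\; (M_j \boxtimes M_k) \boxtimes M \;\cong\; M_j \boxtimes (M_k \boxtimes M) \;\cong\; M_j \boxtimes M, \]
so the isomorphism class of $M_i \boxtimes M$ depends only on the coset $M_i \cdot \Stab_{\cC}(M)$.

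No step should present a genuine obstacle here: the entire argument is a routine transport along associator isomorphisms, and the only thing to check is that these associators exist in $\cC$ rather than just in some ambient category. This is guaranteed because objects of $\uPic_{\cC}(V)$ are by definition universally composable, so the hypotheses of Lemma \ref{lem:composable-pairs-give-associativity-and-commutativity} are available in each of the three associator applications above.
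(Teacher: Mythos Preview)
Your proof is correct and follows essentially the same route as the paper: both arguments verify closure under product and inverses via associativity chains, then deduce coset-independence by the same manipulation. Your version is slightly more explicit in noting that $V$ lies in the stabilizer and in justifying why the needed associators exist via composability, but the substance is identical.
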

\begin{proof}
If $M_i$ and $M_j$ stabilize $M$, then $M_i \boxtimes (M_j \boxtimes M) \cong M_i \boxtimes M \cong M$, so we have a full symmetric monoidal subcategory.  If $M_i$ stabilizes $M$, then $M \cong (M_i^{-1} \boxtimes M_i) \boxtimes M \cong M_i^{-1} \boxtimes (M_i \boxtimes M) \cong M_i^{-1} \boxtimes M$, so this category is closed under inverses.  If $M_j$ stabilizes $M$ but $M_i$ does not necessarily stabilize $M$, then $M_i \boxtimes M \cong M_i \boxtimes (M_j \boxtimes M) \cong (M_i \boxtimes M_j) \boxtimes M \cong M_{i+j} \boxtimes M$, so we have independence within cosets.
\end{proof}

\begin{lem} \label{lem:forming-irreducible-modules}
Let $W_A$ denote the quasi-simple $\cC$-current extension of $V$ by an even homomorphism $\pi: A \to \Pic_{\cC}(V)$.  Then for any object $M$ in $\cC$, the direct sum $\bigoplus_{i \in A/(A \cap\Stab_{\cC}(M))} (M_i \boxtimes M)$ is an irreducible $W_A$-module in $\cT$.  In particular, $W_A$ is a simple $\cT$-vertex algebra.
\end{lem}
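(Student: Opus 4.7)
My plan is to construct the $W_A$-module structure on $N = \bigoplus_{s \in A/B}(M_s \boxtimes M)$ using the module obstruction theory of Lemma~\ref{lem:module-datum-cocycle}, and then deduce irreducibility from the fact that the graded pieces are pairwise non-isomorphic simple $V$-modules. Concretely, I would set $B = A \cap \Stab_{\cC}(M)$, fix a representative $k_s \in A$ for each coset $s \in A/B$, write $N_s = M_{k_s} \boxtimes M$, and observe that the simple $\cC$-current property furnishes a one-dimensional space $\cI_{i,s}^{i+s}$ of intertwining operators $M_i \otimes N_s \to N_{i+s}((z))$ for each $(i,s)$. Universal composability together with the existing $W_A$-vertex algebra structure and the analog of Lemma~\ref{lem:4-input-intertwining-operators} with $M$ in the rightmost slot supplies the 3-input and 4-input composition spaces, so these data form a $\cT$-module datum.

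The main step is to trivialize the obstruction cocycle $\Phi$. I would fix the operators $\{m^{i,j}_z\}$ realizing the $W_A$-vertex algebra structure so that the associativity cocycle $F \equiv 1$, and then define $\tilde{m}^{i,s}_z \in \cI_{i,s}^{i+s}$ by composing $m^{i,k_s}_{z-w}$ with the canonical fusion intertwining operator $\mu_s \colon M_{k_s}\otimes M \to N_s((w))$ and the canonical identification $M_{i+k_s}\boxtimes M \simto N_{i+s}$. The 4-input analog of Lemma~\ref{lem:4-input-intertwining-operators} applied to $M_i \otimes M_j \otimes M_{k_s} \otimes M$ shows that both sides of the module-associativity relation factor through a common map to $N_{i+j+s}[[z,w]][z^{-1},w^{-1},(z-w)^{-1}]$; the residual scalar $\Phi(i,j,s)$ records only ratios of the identification isomorphisms, and these may be absorbed by rescaling the $\tilde{m}^{i,s}_z$. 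Hence $\Phi$ is a $2$-coboundary, and Lemma~\ref{lem:module-datum-cocycle} produces the $W_A$-module structure on $N$.

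For irreducibility, since $k_s - k_{s'} \notin B$ whenever $s \neq s'$, we have $N_s \not\cong N_{s'}$, so every $V$-submodule of $N$ decomposes as $\bigoplus_{s \in T} N_s$ for some $T \subset A/B$. A nonzero $W_A$-submodule $N'$ therefore contains some full summand $N_{s_0}$; for any other $s$, picking $i$ with $i+s_0=s$ and applying the nonzero intertwining operator $\tilde{m}^{i,s_0}_z$ produces, via its coefficients, a nonzero $V$-submodule of the simple module $N_s$, forcing $N_s \subset N'$ and hence $N' = N$. Specializing to $M = V$ gives $\Stab_{\cC}(V) = \{[V]\}$, so $B$ is trivial and $N = W_A$, whence $W_A$ is irreducible as a $W_A$-module, i.e., a simple $\cT$-vertex algebra. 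The main obstacle is the cocycle-trivialization step: unlike Proposition~\ref{prop:formation-of-modules}, where $M_{B+k}$ sits inside a single $A$-graded commutativity datum and $\Phi$ is literally a restriction of a normalized pullback of $F$, here $M$ lies outside the simple-current grading, so the coherence must be extracted through the universal property of the fusion product rather than from a group-cohomological normalization.
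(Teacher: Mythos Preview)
Your irreducibility argument is essentially identical to the paper's: multiplicity-freeness over $V$ forces any $W_A$-submodule to be a partial sum $\bigoplus_{s\in T} N_s$, and closure under the nonzero intertwining operators $\tilde m^{i,s}_z$ forces $T$ to be all of $A/B$ or empty.  The paper's proof stops there; it does not construct the $W_A$-module structure at all but simply takes it as given.  So your proposal is strictly more complete, and the extra content is precisely the module-datum construction you outline.

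That said, the cocycle-trivialization step you flag as ``the main obstacle'' is a genuine gap as written.  The sentence ``$\Phi(i,j,s)$ records only ratios of the identification isomorphisms, and these may be absorbed by rescaling'' is not yet a proof: by Shapiro's lemma, $H^2\bigl(A,\mathrm{Fun}(A/B,\bC^\times)\bigr)\cong H^2(B,\bC^\times)$, which is generally nonzero, so you cannot appeal to automatic vanishing.  What makes your specific $\Phi$ a coboundary is that your $\tilde m^{i,s}_z$ are built from the $m^{i,k_s}_z$ of the already-associative $W_A$, so the ``pre-identification'' version of $\Phi$ is literally a restriction of $F\equiv 1$; the identifications then contribute a factor of the form $\alpha(i,j+s)\alpha(j,s)\alpha(i+j,s)^{-1}$, which is visibly $d\lambda$ for $\lambda_{i,s}=\alpha(i,s)$.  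You should make this two-line computation explicit rather than assert it.  An alternative that avoids the bookkeeping entirely is to take the $A$-set $S=A$ itself, where $\Phi$ is manifestly the restriction of $F\equiv 1$, obtain a $W_A$-module structure on $\bigoplus_{a\in A}(M_a\boxtimes M)$, and then extract the desired module as an irreducible constituent.

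One small correction: your specialization $M=V$ gives $B=\ker\pi$, not $B=0$; the conclusion $N=W_A$ requires $\pi$ injective.  For general $\pi$ you should instead run the multiplicity-free argument directly on $W_A$ as a $V$-module, which works whenever $\pi$ is injective and otherwise shows only that $W_{\pi(A)}$ is simple.
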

\begin{proof}
By the coset property of the stabilizer, the direct sum module is multiplicity-free as a $V$-module.  Since each $M_i \boxtimes M$ is irreducible, any $W_A$-submodule $N$, when viewed as a $V$-module, is contained in the direct sum of the irreducible modules $M_i \boxtimes M$, so it is isomorphic to a direct sum of a subset of such modules that is closed under fusion with the constituent modules in $W_A$.  Such a subset is either empty or the full coset space, so $N$ is either the whole direct sum module or zero.
\end{proof}

Recall from Corollary \ref{cor:2-torsion-modules-exist} that for any $\pi: A \to \Pic_{\cC}(V)$, the subgroup $2A$ plays a special role, in that we have existence of a vertex algebra $W_{2A}$, and modules $M_{2A+i}$ for each coset of $2A$ in $A$, such that there is a commutativity datum on these modules induced from the commutativity datum on $V$-modules.  If we choose our category $\cC'$ so that its objects are the $W_{2A}$ modules $M_{2A+i}$, we find that $\cC' = Curr(\cC')$ is symmetric monoidal, and the modules are quasi-simple $\cC'$-currents whose tensor square is $W_{2A}$.  Unfortunately, we cannot conclude that they are even, so we can only conclude that totally even subgroups of $A/2A$ induce $\cT$-vertex algebras as quasi-simple $\cC$-current extensions.

\subsection{Contragradients and regularity}

We continue to use the notation from the previous section, and we maintain the assumption that $V$ is a $\cC$-unit.  From now on, we shall assume that $\cT$ incorporates M\"obius structure (in particular, $V$ is M\"obius, quasi-conformal, conformal, or a vertex operator algebra), and furthermore, that there is a contragradient involution on $\cC$ (see Definition \ref{defn:contragradient} below), such that $V \cong V^\vee$.  Equivalently, we assume there is a nondegenerate invariant bilinear form on $V$.

\begin{defn} \label{defn:contragradient}
Let $V$ be a M\"obius (resp. quasiconformal, conformal) vertex algebra or vertex operator algebra.  Given an integral-weight $V$-module $M$ in the M\"obius (resp. quasiconformal, conformal, VOA) sense, a weak contragradient is a $V$-module $M^\vee$ equipped with a bilinear map $\langle-,-\rangle: M^\vee \times M \to \bC$ satisfying the following properties:
\begin{enumerate}
\item The pairing is non-degenerate, i.e., the left and right kernels are zero.
\item Virasoro operators are adjoint, i.e., $\langle L(i) u, m \rangle = \langle u, L(-i) m \rangle$ for all $i \in \bZ$ for which $L(i)$ and $L(-i)$ are defined.
\item The $V$-action satisfies the adjoint identity from \cite{FHL93} equation 5.2.4:
\[ \langle u, act_z(v \otimes m) \rangle = \langle act_{z^{-1}}(e^{zL(1)}(-z^{-2})^{L(0)} v \otimes u), m \rangle \]
for all $v \in V, m \in M, u \in M^\vee$.
\end{enumerate}
Given a full subcategory $\cC$ of $V$-modules in the M\"obius (resp. quasiconformal, conformal, VOA) sense, a contragradient involution is a contravariant functor on $\cC$ that takes each object to a weak contragradient $M \mapsto M^\vee$ on $\cC$, and whose composite with itself is isomorphic to identity.
\end{defn}

\begin{rem}
For vertex operator algebras, integral-weight modules have a natural contragradient functor given by taking the sum of dual spaces in each $L(0)$-eigenspace - this is involutive because the eigenspaces are finite dimensional.  In more general settings, such a functor may require unnatural choices.  However, \cite{HLZ07} has a ``strongly gradable'' condition that seems to cover the known interesting examples, and implies the existence of a contragradient involution.
\end{rem}

\begin{defn}
Given a $\cT$-intertwining operator $I_z: M_1 \otimes M_2 \to M_3((z))$, its adjoint $I'_z: M_1 \otimes M_3^\vee \to M_2^\vee((z))$ is defined by:
\[ \langle \phi, I_z(v \otimes m) \rangle = \langle I'_{z^{-1}}(e^{zL(1)}(-z^{-2})^{L(0)} v \otimes \phi), m \rangle \]
for all $v \in M_1$, $m \in M_2$, and $\phi \in M_3^\vee$.
\end{defn}

\begin{rem}
By Proposition 5.5.2 in \cite{FHL93}, passage to the adjoint intertwining operator yields an isomorphism $\cI_{\cT}\binom{M_3}{M_1,M_2} \cong \cI_{\cT}\binom{M_2^\vee}{M_1,M_3^\vee}$.
\end{rem}

\begin{lem} \label{lem:contragradients-are-inverses}
For any quasi-simple $\cC$-current $M$, we have $M \boxtimes M^\vee \cong V$.  In particular, the contragradient involution restricts to an involution on $Curr_{\cC}(V)$ and $\uPic_{\cC}(V)$.
\end{lem}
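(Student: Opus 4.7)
The plan is to produce a nonzero $V$-module homomorphism $M \boxtimes M^\vee \to V$ and then invoke simplicity of both sides. Setting $Y = V$ in the defining universal property of the fusion product $M \boxtimes M^\vee$ (which applies because $V$ is a $\cC$-unit and hence lies in $\cC$) gives a natural isomorphism
\[ \Hom_V(M \boxtimes M^\vee, V) \cong I_{\cT}\binom{V}{M, M^\vee}, \]
so it suffices to exhibit a single nonzero $\cT$-intertwining operator $M \otimes M^\vee \to V((z))$.

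To produce such an operator, I would apply the adjoint intertwining operator isomorphism (the remark following the definition of adjoint, i.e., $I_{\cT}\binom{M_3}{M_1,M_2} \cong I_{\cT}\binom{M_2^\vee}{M_1, M_3^\vee}$) with $M_1 = M$, $M_2 = M^\vee$, $M_3 = V$, obtaining
\[ I_{\cT}\binom{V}{M, M^\vee} \cong I_{\cT}\binom{(M^\vee)^\vee}{M, V^\vee} \cong I_{\cT}\binom{M}{M, V}, \]
where the second step uses the assumed involutivity $(M^\vee)^\vee \cong M$ and the hypothesis $V \cong V^\vee$ coming from the nondegenerate invariant bilinear form on $V$. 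The right-hand space is manifestly nonzero: by Lemma~\ref{lem:skew-sym} it contains the skew-symmetrized action $act^*_z : M \otimes V \to M((z))$. Transporting this back gives a nonzero morphism $M \boxtimes M^\vee \to V$, and since both source and target are simple $V$-modules in $\cC$ (the fusion product by definition, $V$ because it is a $\cC$-unit), Schur's lemma upgrades it to an isomorphism.

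For the second assertion, that contragradient restricts to involutions of $Curr_{\cC}(V)$ and $\uPic_{\cC}(V)$, invertibility of $M^\vee$ when $M$ is invertible is immediate from the first assertion, since $M \boxtimes M^\vee \cong V$ exhibits $M$ as an inverse of $M^\vee$. To upgrade this to membership in $Curr_{\cC}(V)$, I would verify that $M^\vee$ is universally composable whenever $M$ is: given any simple $\cC$-current $N$, each of the three composability conditions for $(M^\vee, N)$ can be reduced to a corresponding statement about a pair involving $M$ by fusing on the left with $M$ and collapsing $M \boxtimes M^\vee$ to $V$, applying the associator isomorphisms provided by the universal composability of $M$. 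I expect the main obstacle to be organizing these reductions in a way that cleanly transfers the associativity and locality conditions across the isomorphism $M \boxtimes M^\vee \cong V$; the conceptual heart of the lemma is the first statement, whose proof collapses to a single application of adjointness and skew-symmetry.
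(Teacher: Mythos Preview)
Your proof of the first assertion is correct and is essentially the same as the paper's: both produce a nonzero intertwining operator of type $\binom{V}{M,M^\vee}$ (equivalently $\binom{V}{M^\vee,M}$ via skew-symmetry) by applying the adjoint isomorphism to $act^*_z$ together with $V\cong V^\vee$, then invoke the simple-current universal property and simplicity of both sides. The only cosmetic difference is that the paper writes out the explicit formula for the resulting operator $I_z$, whereas you use the abstract isomorphism $I_{\cT}\binom{V}{M,M^\vee}\cong I_{\cT}\binom{M}{M,V}$; and the paper lands in $I_{\cT}\binom{V}{M^\vee,M}$ rather than $I_{\cT}\binom{V}{M,M^\vee}$, which is immaterial by skew-symmetry.

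For the second assertion the paper is at least as terse as you are, simply noting $M^{-1}\cong M^\vee$. One remark: for the $\uPic_{\cC}(V)$ part there is a cleaner route than the composability reduction you sketch. If $M\in\uPic_{\cC}(V)$ it already has a universally composable inverse $N$, and then $N \cong N\boxtimes V \cong N\boxtimes(M\boxtimes M^\vee) \cong (N\boxtimes M)\boxtimes M^\vee \cong V\boxtimes M^\vee \cong M^\vee$ (the associativity step uses only that $(N,M)$ is a composable pair, which holds since $N$ is universally composable and $M^\vee\in\cC$). This identifies $M^\vee$ with an object already known to lie in $\uPic_{\cC}(V)$, avoiding the need to verify composability for $M^\vee$ from scratch.
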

\begin{proof}
Since $M$ is a simple current, $M \boxtimes M^\vee$ is an object of $\cC$.  By using the adjoint intertwining operator of $act^*_z$ and self-duality of $V$, we see that there exists a one-dimensional space of intertwining operators $M^\vee \otimes M \to V((z))$, spanned by the operator $I_z$ defined by:
\[ \langle v, I_z(m' \otimes m) \rangle_V = \langle act_{-z^{-1}}(v \otimes e^{zL(1)}(-z^2)^{-L(0)}m'), e^{z^{-1}L(1)}m \rangle_M \]
for all $v \in V$, $m \in M$, $m' \in M^\vee$.  Thus, $M \boxtimes M^\vee \cong V$, and $M^{-1} \cong M^\vee$.
\end{proof}

\begin{rem}
If $M \cong M^\vee$, then we get a contragradient pairing $\langle-,-\rangle_M$ on $M$, so the space $I_{\cT}\binom{V}{M,M}$ of $\cT$-intertwining operators $I: M \otimes M \to V((z))$ is one dimensional and spanned by the operator $I_z$ given above.  This operator is put to good use in an alternative construction of the monster vertex algebra $V^\natural$, in section 3.4 of \cite{H96}.
\end{rem}

The following is a minor generalization of Proposition 2.6 in \cite{LY08}.

\begin{prop} \label{prop:evenness-equivalent-to-symmetric-form}
Suppose $V$ is self-contragradient. Let $M$ be a self-dual object in $\cC$.  Then $M$ is even if and only if the contragradient pairing $\langle -,- \rangle_M$ is symmetric, and $M$ is odd if and only if the contragradient pairing is antisymmetric.
\end{prop}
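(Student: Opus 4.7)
The plan is to exploit the one-dimensionality of $I_{\cT}\binom{V}{M,M}$. Since $M \cong M^\vee$, Lemma \ref{lem:contragradients-are-inverses} gives $M \boxtimes M \cong V$, so this space is a line. Let $I_z$ be the canonical generator from the proof of Lemma \ref{lem:contragradients-are-inverses}, so
\[ \langle \unit, I_z(u \otimes v)\rangle_V = \langle e^{zL(1)}(-z^2)^{-L(0)}u,\, e^{z^{-1}L(1)}v \rangle_M. \]
By Lemma \ref{lem:skew-sym}, $I^*_z$ lies in the same line, so $I_z = c \cdot I^*_z$ for some $c$; Lemma \ref{lem:intertwining-symmetrizer} gives $c \in \{\pm 1\}$, and Lemma \ref{lem:evenness} identifies $M$ as even iff $c = 1$ and odd iff $c = -1$.

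Independently, Schur's lemma applied to the irreducible self-dual $M$ yields $\dim \Hom_V(M, M^\vee) = 1$, so the transposed form $(u,v) \mapsto \langle v, u\rangle_M$ is proportional to the original, and iterating shows the constant is a sign $\epsilon \in \{\pm 1\}$; the form is symmetric iff $\epsilon = 1$ and antisymmetric iff $\epsilon = -1$. The theorem reduces to the identification $c = \epsilon$. For $u, v \in M_n$, the weight-matching property of $\langle -,-\rangle_M$ together with $(-z^2)^{-L(0)} u = (-1)^n z^{-2n} u$ collapses the displayed formula to the single monomial
\[ \langle \unit, I_z(u \otimes v)\rangle_V = (-1)^n z^{-2n} \sum_{k \geq 0}\frac{\langle L(1)^k u, L(1)^k v\rangle_M}{(k!)^2} =: (-1)^n z^{-2n}\phi(u,v). \]
Using $L(\pm 1)\unit = 0$ and $I^*_z(u \otimes v) = e^{zL(-1)} I_{-z}(v \otimes u)$, the parallel calculation gives $\langle \unit, I^*_z(u \otimes v)\rangle_V = (-1)^n z^{-2n}\phi(v,u)$. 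Then $I_z = c I^*_z$ forces $\phi(u,v) = c\phi(v,u)$, while $\langle u,v\rangle_M = \epsilon\langle v,u\rangle_M$ gives $\phi(v,u) = \epsilon\phi(u,v)$; comparing yields $c = \epsilon$ on any pair with $\phi(u,v) \neq 0$.

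The main obstacle is exhibiting such a pair, i.e., verifying $\phi \not\equiv 0$. This should follow from $I_z \neq 0$ and nondegeneracy of $\langle -,-\rangle_V$: in the VOA or $V_0 = \bC \unit$ case, $\unit$ pairs nondegenerately with the $V_0$-part of $I_z(u \otimes v)$, so $\phi \equiv 0$ would force the $V_0$-component of $I_z$ to vanish identically, contradicting simplicity of $V$ together with nontriviality of $I_z$. In the general M\"obius setting I would first check that the potentially nonzero element $L(1)\unit \in V_{-1}$ enters the calculation only through explicit subleading $z$-corrections that leave the sign comparison intact, and then reduce to $L(1)$-null vectors, on which $\phi$ simplifies to $\langle -,-\rangle_M$ itself so that nondegeneracy closes the argument.
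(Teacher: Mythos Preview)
Your computation of $\phi(u,v)=\sum_{k\ge 0}\frac{1}{(k!)^2}\langle L(1)^k u, L(1)^k v\rangle_M$ and the identity $\phi(u,v)=c\,\phi(v,u)$ coincide with the paper's. The divergence is in how the sign is extracted. You first invoke Schur to get a global sign $\epsilon$ for the form and then try to match $c=\epsilon$ by finding a pair with $\phi(u,v)\neq 0$; the paper never needs such a pair. Instead, it runs an induction up the filtration $M_{[n]}=\ker L(1)^n$: on $M_{[1]}$ the sum collapses to the $k=0$ term and gives $\Omega_M\langle m',m\rangle=\langle m,m'\rangle$ directly; for $m,m'\in M_{[n+1]}$ every term with $k\ge 1$ involves $L(1)^k m, L(1)^k m'\in M_{[n]}$, so the inductive hypothesis cancels those terms on both sides and again leaves $\Omega_M\langle m',m\rangle=\langle m,m'\rangle$. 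Local nilpotence of $L(1)$ makes the filtration exhaustive, and nondegeneracy of the pairing finishes the argument. No appeal to Schur, and no nonvanishing of $\phi$ is required.

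Your residual step is a genuine gap as written. Showing $\phi\not\equiv 0$ via ``$\unit$ pairs nondegenerately with the $V_0$-part of $I_z$'' uses $V_0=\bC\unit$, which is not assumed in the M\"obius setting, and your fallback ``reduce to $L(1)$-null vectors, on which $\phi=\langle-,-\rangle_M$'' does not close either: the restriction of $\langle-,-\rangle_M$ to primary vectors can be identically zero (primaries are orthogonal to $\operatorname{im} L(-1)$, and there is no decomposition $M_n=\ker L(1)\oplus \operatorname{im} L(-1)$ available in general). The paper's filtration induction is precisely the device that replaces this missing nonvanishing argument.
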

\begin{proof}
Let $m$ and $m'$ be homogeneous of weight $k$.  Then we have
\[ \begin{aligned}
\langle \unit, I_z(m' \otimes m) \rangle_V &= \langle act_{-z^{-1}}(\unit \otimes e^{zL(1)}(-z^2)^{-L(0)}m'), e^{z^{-1}L(1)}m \rangle_M \\
&= (-1)^k z^{-2k}\langle e^{zL(1)} m', e^{z^{-1}L(1)} m' \rangle_M \\
&= (-1)^k z^{-2k} \sum_{j=0}^\infty \frac{1}{(j!)^2} \langle L(1)^j m', L(1)^j m \rangle_M \\
\end{aligned}\]
Let $\Omega_M$ be the constant that is $1$ if $M$ is even and $-1$ if $M$ is odd (this is commonly known as the Frobenius-Schur indicator of $M$).  The identity $\Omega_M I_z(m' \otimes m) = I^*_z(m \otimes m') = e^{zL(-1)}I_{-z}(m \otimes m')$ implies
\[ \begin{aligned}
\langle \unit, \Omega_M I_z(m' \otimes m) \rangle_V &= \langle \unit, e^{zL(-1)} I_{-z}(m \otimes m') \rangle_V \\
&= \langle e^{zL(1)}\unit,  I_{-z}(m \otimes m') \rangle_V \\
&= \langle \unit,  I_{-z}(m \otimes m') \rangle_V \\
\end{aligned} \]
We conclude that
\[ \Omega_M \sum_{j=0}^\infty \frac{1}{(j!)^2} \langle L(1)^j m', L(1)^j m \rangle_M = \sum_{j=0}^\infty \frac{1}{(j!)^2} \langle L(1)^j m, L(1)^j m' \rangle_M \]
Note that the restriction of $\langle -,- \rangle_M$ to the primary subspace, where $L(1)$ acts as zero, is clearly symmetric when $M$ is even, and antisymmetric when $M$ is odd.  This is a rather superfluous base of our induction, but it helps to illustrate the point.  For each positive integer $k$, let $M_{[k]}$ denote the subspace of $M$ annihilated by $L(1)^k$, and let $P_k$ be the claim that the restriction of $\langle -,- \rangle_M$ to $M_{[k]}$ is symmetric when $M$ is even, and antisymmetric when $M$ is odd.  If $P_n$ holds for some $n$, then for any $m,m'$ in $M_{[n+1]}$,
\[ \begin{aligned}
\Omega_M \langle m', m \rangle_M + \Omega_M \sum_{j=1}^n \frac{1}{(j!)^2} \langle L(1)^j m', L(1)^j m \rangle_M
&= \Omega_M \sum_{j=0}^n\frac{1}{(j!)^2} \langle L(1)^j m', L(1)^j m \rangle_M \\
&= \sum_{j=0}^n \frac{1}{(j!)^2} \langle L(1)^j m, L(1)^j m' \rangle_M \\
&= \langle m, m' \rangle_M + \sum_{j=1}^n \frac{1}{(j!)^2} \langle L(1)^j m, L(1)^j m' \rangle_M
\end{aligned} \]
By our inductive hypothesis, we can cancel the sums with positive $j$, so $\Omega_M \langle m', m \rangle_M = \langle m, m' \rangle_M$, and $P_{n+1}$ holds.  By the local nilpotence axiom on modules, the increasing filtration of $M$ by $\{M_{[k]} \}_{k \geq 0}$ is exhaustive, so $M$ even implies $\langle -,- \rangle_M$ is symmetric, and $M$ odd implies $\langle -,- \rangle_M$ is antisymmetric.  $M$ is either even or odd, and inner product is nondegenerate, hence it cannot be both symmetric and antisymmetric.  This yields the converse implications.
 \end{proof}

\begin{cor} \label{cor:reduction-to-contragradient-form}
Suppose there exists a contragradient involution on a subcategory $\cC$ of irreducible $V$-modules in $\cT$, and suppose $V$ is self-contragradient.  Let $\pi: A \to \Pic_{\cC}(V)$ be a $\cT$-compatible homomorphism from an $\bF_2$-vector space.  Then the canonical commutativity datum given in Proposition \ref{prop:simple-currents-commutativity-datum} is even if and only if the contragradient form on each module is symmetric.  In particular, there exists a quasi-simple $\cC$-current extension of $V$ by $\pi$ if and only if all of the contragradient forms are symmetric.
\end{cor}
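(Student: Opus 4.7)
The plan is to reduce evenness of the canonical commutativity datum to symmetry of the contragradient forms via Proposition \ref{prop:evenness-equivalent-to-symmetric-form}, for which I first need to verify that every $M_i$ is self-contragradient. Because $A$ is an $\bF_2$-vector space, $2i = 0$ for every $i \in A$, so $M_i \boxtimes M_i \cong M_{2i} = M_0 = V$. Combined with the isomorphism $M_i \boxtimes M_i^\vee \cong V$ from Lemma \ref{lem:contragradients-are-inverses} and the uniqueness of inverses in the abelian group $\Pic_{\cC}(V)$, this forces $M_i \cong M_i^\vee$. Hence each $M_i$ carries a canonical non-degenerate contragradient bilinear form $\langle -,-\rangle_{M_i}$.

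Next I would apply the definition of an even commutativity datum: the canonical datum of Proposition \ref{prop:simple-currents-commutativity-datum} is even exactly when each $M_i$ is an even simple $\cC$-current, i.e., some (hence any) nonzero intertwining operator $M_i \otimes M_i \to V((z))$ is even. Since each $M_i$ is self-dual, Proposition \ref{prop:evenness-equivalent-to-symmetric-form} translates this directly into the statement that $\langle -,-\rangle_{M_i}$ is symmetric, proving the first equivalence.

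For the ``in particular'' statement, I would combine the above with Proposition \ref{prop:even-homs-yield-vertex-algebras}. If every contragradient form is symmetric, then $\pi$ is even, and the $\cT$-compatibility hypothesis lets Proposition \ref{prop:even-homs-yield-vertex-algebras} produce the simple $\cC$-current extension. For the converse, existence of the extension forces the obstruction $i \mapsto \Omega(i,i)$ to be identically one by Theorem \ref{thm:obstruction}(3); Lemma \ref{lem:evenness} then forces each $m^{i,i}_z$ to be even, and the first part of the corollary gives symmetry of every $\langle -,- \rangle_{M_i}$.

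The main subtlety is the self-duality step, since Proposition \ref{prop:evenness-equivalent-to-symmetric-form} requires $M \cong M^\vee$ as its hypothesis; the $\bF_2$-condition on $A$ is exactly what guarantees this uniformly (and is also what makes the quadratic form $\Omega(i,i)$ the sole obstruction, rather than a proper quadratic refinement on a larger group). Aside from this, the result is a clean assembly of results already established in the paper, so I do not expect further obstacles.
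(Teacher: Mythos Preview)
Your proposal is correct and follows essentially the same approach as the paper: establish self-duality of each $M_i$ via the $\bF_2$ condition and Lemma \ref{lem:contragradients-are-inverses}, then invoke Proposition \ref{prop:evenness-equivalent-to-symmetric-form}, and finally appeal to the existence machinery. Your version is in fact more explicit than the paper's own proof, which is terse and somewhat entangled with the surrounding discussion of $W_{2A}$ and $\cC'$; in particular you spell out both directions of the ``in particular'' clause (using Theorem \ref{thm:obstruction}(3) and Lemma \ref{lem:evenness} for the converse), whereas the paper only cites Proposition \ref{prop:existence}, which strictly speaking supplies only one implication.
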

\begin{proof}
The objects in this category are their own contragradients, since the contragradient on $\cC$ permutes the component $V$-modules in each $M_{2A+i}$.  In particular, $V$ is self-contragradient, so by Proposition \ref{prop:evenness-equivalent-to-symmetric-form}, an object in $\cC'$ is even if and only if its contragradient form is symmetric.  Thus, our new commutativity datum is even, and Proposition \ref{prop:existence} yields existence of the quasi-simple $\cC$-current extension of $V$ by $\pi$, if and only if each contragradient form is symmetric..
\end{proof}

At this point, we refine our scope again.  If our M\"obius vertex algebra $V$ satisfies sufficiently strong finiteness hypotheses, then the category of $V$-modules is a braided monoidal category.  Among other benefits, this means we no longer have to make assumptions about the composability of intertwining operators.  Because of the braided monoidal structure, we have a commutative fusion ring $K_0(V)$, whose product encodes the tensor structure of the semisimplification.  Simple modules form a distinguished basis of $K_0(V)$, and simple currents (in the sense of \cite{DLM96}) are the modules represented by units in the fusion ring.  For this purpose, we adopt the following:

\textbf{Convention:} We choose $V$ to be a simple $C_2$-cofinite M\"obius vertex algebra with finite dimensional weight spaces and invariant bilinear form, and $\cC$ to be any full subcategory of the category of $V$-modules, whose objects have integral weight, such that the isomorphism classes span a multiplicative subgroup of the fusion ring.  As a special case, we may take $V$ to be a $C_2$-cofinite vertex operator algebra.

\begin{lem}
If $V$ is a simple $C_2$-cofinite M\"obius vertex algebra with finite dimensional weight spaces, then the category of $V$-modules is braided monoidal, with finitely many simple objects.
\end{lem}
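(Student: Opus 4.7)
The plan is to reduce both halves of the statement to results already in the literature, rather than building anything from scratch. The hypotheses have been tailored to match the hypotheses of the Huang--Lepowsky--Zhang program and its $C_2$-cofinite specializations, so the proof should be a citation with verification that the hypotheses match.

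First I would handle the finiteness of simple objects. The standard argument, originally going back to Zhu in the VOA case and extended to $C_2$-cofinite settings by Dong--Li--Mason and others, is that $C_2$-cofiniteness of $V$ implies finite-dimensionality of the Zhu algebra $A(V)$ (or, in the M\"obius setting without a conformal vector, of the analogous Frenkel--Zhu-type associative algebra attached to $V$ and its M\"obius action). Simple $V$-modules in the relevant category, being generated by lowest-weight vectors, correspond bijectively to simple modules of this finite-dimensional associative algebra, and hence form a finite set of isomorphism classes. The only delicate point is that we are working with M\"obius (not necessarily conformal) vertex algebras; here I would appeal to the M\"obius version of Zhu's theory that underlies the HLZ framework, where the hypotheses ``$C_2$-cofinite, finite-dimensional weight spaces, invariant bilinear form'' are precisely those needed for the Zhu-type algebra to be finite-dimensional and for the lowest-weight correspondence to be a bijection.

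Second, for the braided monoidal structure, I would cite the Huang--Lepowsky--Zhang series of papers, which construct a braided tensor category structure on the category of grading-restricted generalized $V$-modules whenever $V$ is a M\"obius vertex algebra satisfying a list of technical hypotheses: convergence and extension of products and iterates of (logarithmic) intertwining operators, the associativity isomorphism, and so on. Huang's work on $C_2$-cofinite vertex operator algebras shows that $C_2$-cofiniteness plus the standard grading restrictions implies all of these analytic hypotheses, and the argument carries over verbatim to the M\"obius setting once the Zhu-type algebra is finite-dimensional. Thus the HLZ construction applies and yields a braided monoidal category whose tensor product is the $P(z)$-tensor product, with associator and braiding given by analytic continuation of compositions of intertwining operators.

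The main obstacle, such as there is one, is being careful that all of the above works in the M\"obius (rather than full conformal) setting that this paper insists on. The HLZ papers are written for M\"obius vertex algebras precisely because the authors of the present paper want to include examples without a conformal vector, so the match is by design; nonetheless one must check that the convergence results of Huang (which use Virasoro differential equations) have been upgraded to the M\"obius situation, which has been done either by reducing to the conformal case via a suitable extension or by directly adapting the arguments using the $\mathfrak{sl}_2$-triple generated by $L(-1), L(0), L(1)$. Once this is in place, the statement follows without further work.
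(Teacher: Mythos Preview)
Your overall strategy matches the paper's: both reduce to the Huang--Lepowsky--Zhang framework and then verify that the hypotheses are satisfied under $C_2$-cofiniteness. The paper's proof is slightly more specific about the chain of reductions: it cites HLZ Theorem 12.13 for the braided monoidal structure, reduces via their Theorem 11.5 to checking that all $V$-modules are $C_1$-cofinite and quasi-finite dimensional, and then invokes Gaberdiel--Neitzke (\cite{GN03}, Theorem 13) for these two conditions. Where you appeal to Huang's convergence results via Virasoro differential equations, the paper instead routes through the $C_1$-cofiniteness criterion; both are legitimate paths through the literature, but the Gaberdiel--Neitzke route has the advantage that the paper can simply observe that their proof only uses the weighted structure (finite-dimensional weight spaces, weights bounded below), so no separate M\"obius-versus-conformal adaptation is needed. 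Your concern about lifting Huang's differential-equations argument to the M\"obius setting is thus sidestepped rather than confronted.

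One further difference: you supply a Zhu-algebra argument for the finiteness of simple objects, which the paper's proof does not address at all---it only treats the braided monoidal claim and leaves the finiteness implicit. Your addition is correct and arguably makes the proof more complete than the paper's own.
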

\begin{proof}
By Theorem 12.13 of \cite{HLZ07}, to get a braided monoidal structure, it suffices to show that their Assumption 12.1 holds, but by their Theorem 11.5, it suffices to show that all $V$-modules are $C_1$-cofinite, and the quasi-finite dimensionality condition holds.  Theorem 13 of \cite{GN03} implies both conditions, but under the hypothesis that $V$ is a vertex operator algebra.  However, the proof only uses the assumption that $V$ has a weighted vertex algebra structure with finite dimensional weight spaces and weights that are bounded below.
\end{proof}

\begin{lem}
Under our convention on $\cC$, all objects are invertible universally composable quasi-simple $\cC$-currents.  In particular, $\uPic_{\cC}(V)$ is the core of $\cC$, i.e., the subcategory with the same objects, but only the invertible morphisms.
\end{lem}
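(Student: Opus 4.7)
The plan is to verify, in order, that every object of $\cC$ is a simple $V$-module, a simple $\cC$-current, universally composable, and invertible, and then to read off the final sentence.

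First I would argue that every object of $\cC$ is simple. Under our hypotheses the Grothendieck ring of $V$-modules has as basis the isomorphism classes of simple modules, with nonnegative integer structure constants, so any element of a multiplicative subgroup must be a single basis class: if $[M][N]=[V]$, decomposing $[M]=\sum_i c_i[M_i]$, $[N]=\sum_j d_j[N_j]$ with $M_i,N_j$ simple and $c_i,d_j\in\bZ_{\geq 0}$ forces exactly one $c_i$ and one $d_j$ to be nonzero, each equal to $1$. By the preceding lemma, the category of $V$-modules carries a braided tensor structure, so for any objects $M,N$ of $\cC$ there is a fusion product $M\boxtimes N$ together with a universal intertwining operator representing $I_{\cT}\binom{-}{M,N}$. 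Its Grothendieck class equals $[M][N]$, which lies in our subgroup and hence equals $[X]$ for a unique simple $X$ represented in $\cC$; since this class is a single basis element with multiplicity one, $M\boxtimes N\cong X$. Thus $M\boxtimes N$ is (isomorphic to an object) in $\cC$, and $\Hom_V(M\boxtimes N,Y)\cong I_{\cT}\binom{Y}{M,N}$ for every $Y\in\cC$, making $M$ a simple $\cC$-current.

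For universal composability I would check the three conditions of the definition. The first (that $M\boxtimes N$ is again a simple $\cC$-current) is immediate from the previous paragraph. The second (that $M\boxtimes(N\boxtimes P)\cong(M\boxtimes N)\boxtimes P$) is the braided tensor associator. The third (that both iterated composites of intertwining operators factor through the common subspace $(M\boxtimes(N\boxtimes P))[[z,w]][z^{-1},w^{-1},(z-w)^{-1}]$ and are scalar multiples of each other there) is exactly the associativity of intertwining operators in the Huang--Lepowsky--Zhang theory underlying the preceding lemma; the scalar-multiple conclusion is automatic because the corresponding three-input intertwining space into the simple module $M\boxtimes(N\boxtimes P)$ is one-dimensional. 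Finally, invertibility is immediate: our subgroup contains inverses, so for each $M\in\cC$ there is an object $M^{-1}\in\cC$ with $[M\boxtimes M^{-1}]=[V]$, whence $M\boxtimes M^{-1}\cong V$.

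The last sentence of the lemma then follows at once: every object of $\cC$ belongs to $\uPic_{\cC}(V)$, and the morphisms of $\uPic_{\cC}(V)$ are by definition the isomorphisms in $\cC$, which are exactly the invertible morphisms of the core of $\cC$. The only real obstacle I have suppressed is the invocation of HLZ associativity as a black box: this is the mechanism that converts the abstract braided tensor structure from the preceding lemma into the concrete formal-power-series factorization condition required by the definition of a composable pair.
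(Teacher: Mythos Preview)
Your proof is correct and follows essentially the same approach as the paper's, which is a two-line argument invoking invertibility in the fusion ring for the simple-current property and HLZ Assumption 12.1 for universal composability. You have unpacked more detail than the paper provides---in particular, your argument that units in the fusion ring are necessarily single simple classes, and your explicit verification of invertibility from the group hypothesis, fill in steps the paper leaves entirely implicit---but the substance and the key black box (HLZ associativity) are the same.
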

\begin{proof}
Because the corresponding elements in the fusion ring are invertible, fusion with any object necessarily takes irreducibles to irreducibles.  Thus, any object of $\cC$ is a quasi-simple $\cC$-current.  Universal composability holds by the compatibility of composites and iterates of intertwining operators (Assumption 12.1 in \cite{HLZ07}).
\end{proof}

\begin{defn}
A unit in the fusion ring of $V$ is called even if it corresponds to an even quasi-simple $\cC$-current.
\end{defn}

\begin{thm} \label{thm:even-units-yield-extension}
Let $V$ be a simple $C_2$-cofinite M\"obius vertex algebra with finite dimensional weight spaces and invariant bilinear form.  For any collection of $V$-modules parametrized by a group of even units in the fusion ring of $V$, the direct sum has the structure of a quasi-simple $\cC$-current extension, and this structure is unique up to isomorphism.  The extension is a $C_2$-cofinite M\"obius vertex algebra with finite dimensional weight spaces.  If $V$ is also a rational vertex operator algebra, then the extension is also rational, i.e., it is a simple regular vertex operator algebra.
\end{thm}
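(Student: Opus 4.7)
The plan is to chain together the main existence result for even commutativity data with some finiteness preservation arguments. First, under the hypotheses the category of $V$-modules is braided monoidal with only finitely many simple objects (by the lemma immediately preceding the theorem), so the fusion ring is a finite-dimensional $\bZ$-algebra and in particular any group $A$ of units is finite. Choose simple $V$-modules $\{M_i\}_{i\in A}$ representing these units and let $\pi\colon A\to \Pic_{\cC}(V)$ be the resulting homomorphism, which is even by hypothesis.

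Next I verify the $\cT$-compatibility hypothesis of Proposition~\ref{prop:even-homs-yield-vertex-algebras}: since $A$ is finite and each $M_i$ is an object of $\cC$ with finite dimensional weight spaces, the direct sum $W_A=\bigoplus_{i\in A}M_i$ is a M\"obius $V$-module with finite dimensional weight spaces. Proposition~\ref{prop:even-homs-yield-vertex-algebras} then produces a $\cT$-vertex algebra structure on $W_A$ that restricts to the prescribed one-dimensional spaces of intertwining operators, and asserts its uniqueness up to isomorphism. Simplicity of $W_A$ as a $\cT$-vertex algebra is the $M=V$ special case of Lemma~\ref{lem:forming-irreducible-modules}.

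To see that $W_A$ is $C_2$-cofinite, note $C_2(W_A)$ contains $\sum_{i\in A}\mathrm{span}\{u_{-2}m:u\in V, m\in M_i\}$, so $W_A/C_2(W_A)$ is a quotient of $\bigoplus_{i\in A}M_i/C_2^V(M_i)$. Each irreducible $V$-module $M_i$ is $C_2$-cofinite as a $V$-module by standard inheritance results for $C_2$-cofinite weighted vertex algebras with finite dimensional weight spaces (the Gaberdiel--Neitzke/Buhl argument, whose proof uses only the weighted structure with bounded-below finite dimensional weight spaces), so each summand is finite dimensional, hence so is the finite sum. Finite dimensionality of weight spaces of $W_A$ is already established.

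Finally, if $V$ is rational, I would conclude that $W_A$ is rational by restricting an arbitrary $W_A$-module $N$ to $V$, decomposing it into irreducible $V$-modules by rationality, and then using the even commutativity datum together with the isomorphisms of Lemma~\ref{lem:forming-irreducible-modules} to assemble these irreducible $V$-summands into irreducible $W_A$-modules of the form $\bigoplus_{i\in A/(A\cap\Stab_{\cC}(M))}(M_i\boxtimes M)$. The hard part will be ruling out nontrivial extensions among these candidate $W_A$-modules and showing that every irreducible $W_A$-module arises this way; I expect to handle this either by quoting the Huang--Kirillov--Lepowsky correspondence \cite{HKL14} between commutative algebra objects in the vertex tensor category of $V$ and extensions of $V$ (under which complete reducibility of $W_A$-modules is equivalent to semisimplicity of the module category of this algebra object, which follows automatically when the ambient braided tensor category is semisimple), or alternatively by invoking the existing regularity-preservation results for simple current extensions in the rational VOA literature.
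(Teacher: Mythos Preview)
Your approach is essentially the same as the paper's. The paper's own proof is three lines: existence and uniqueness from Proposition~\ref{prop:even-homs-yield-vertex-algebras}, $C_2$-cofiniteness by citing Lemma~2.6 of \cite{Y04}, and rationality by citing Theorem~4.5 of \cite{L01}. Your argument unpacks these citations: your verification of $\cT$-compatibility and your invocation of Lemma~\ref{lem:forming-irreducible-modules} for simplicity make explicit what the paper leaves implicit; your $C_2$-cofiniteness argument (bounding $W_A/C_2(W_A)$ by $\bigoplus_i M_i/C_2^V(M_i)$ and then quoting $C_2$-cofiniteness of irreducible modules) is exactly the content of Yamauchi's lemma; and your fallback for rationality, ``invoking the existing regularity-preservation results for simple current extensions in the rational VOA literature,'' is precisely the paper's citation of Lam. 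One caution on your first route for rationality: semisimplicity of the module category of a commutative algebra object in a semisimple braided tensor category is \emph{not} automatic in general, so if you pursue the \cite{HKL14} approach you will need to use the specific feature that your algebra is a sum of invertible objects; the citation to \cite{L01} is the safer path.
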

\begin{proof}
The first claim follows from Proposition \ref{prop:even-homs-yield-vertex-algebras}.  $C_2$-cofiniteness follows from Lemma 2.6 in \cite{Y04} - the arguments in the cited papers use only the weighted structure on $V$.  Rationality follows from either Theorem 4.5 of \cite{L01} or Theorem 2.14 of \cite{Y04}.
\end{proof}

As we mentioned in the introduction, we think evenness always holds under our current convention, although the weight of evidence is somewhat stronger in the case of regular VOAs than for $C_2$-cofinite M\"obius vertex algebras.

\begin{conj}[Evenness conjecture]
For any simple self-dual module $M$ over a simple $C_2$-cofinite M\"obius vertex algebra with invariant bilinear form, the contragradient form on $M$ is symmetric.
\end{conj}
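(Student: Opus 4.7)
The claim is that for any simple self-dual $V$-module $M$, the contragradient pairing $\langle-,-\rangle_M$ is symmetric. By Proposition \ref{prop:evenness-equivalent-to-symmetric-form} this is equivalent to $M$ being even, and by Lemma \ref{lem:evenness} equivalent in turn to $\Omega(M,M) = +1$, where $\Omega(M,M) \in \{\pm 1\}$ is the self-braiding scalar on the one-dimensional space $I_{\cT}\binom{V}{M,M}$ of intertwining operators $I_z : M \otimes M \to V((z))$. The plan is therefore to pin down the sign of $\Omega(M,M)$; I would approach this via a ribbon-categorical reformulation that mirrors the Guido-Longo spin-statistics theorem discussed in the introduction.

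The first two steps are preparatory. Invoke the Huang-Lepowsky-Zhang program, which under the $C_2$-cofinite and finite-dimensional-weight-space hypotheses endows the category of $V$-modules with a braided tensor structure, and identify the twist with $\theta_M = e^{2\pi i L(0)}|_M$; on an integral-weight module this is $\mathrm{id}_M$. Applied to a simple self-dual $M$ with $M \boxtimes M \cong V$, the standard ribbon identity $\theta_{M \boxtimes M} = c_{M,M}^2 \circ (\theta_M \boxtimes \theta_M)$ collapses to $c_{M,M}^2 = \theta_V = 1$, recovering in categorical language the dichotomy $\Omega(M,M) = c_{M,M} \in \{\pm 1\}$ proved formally in Lemma \ref{lem:evenness}.

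The decisive third step is to force this sign to $+1$. I would combine the adjoint formula
\[ \langle \phi, \mathrm{act}_z(v \otimes m)\rangle = \langle \mathrm{act}_{z^{-1}}\bigl(e^{zL(1)}(-z^{-2})^{L(0)} v \otimes \phi\bigr), m\rangle \]
with the skew-symmetry identity $I_z(u \otimes v) = \Omega(M,M)\, e^{zL(-1)} I_{-z}(v \otimes u)$, pair everything with $\langle \unit, -\rangle_V$, and read off the sign from the lowest-order coefficient of a matched pair of $L(0)$-eigenvectors. The integrality of $L(0)$-eigenvalues makes the exponent in $(-z^{-2})^{L(0)}$ an even integer in every weight-matched matrix element, which is what should force $\Omega(M,M) = +1$ and rule out the antisymmetric case.

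The main obstacle is precisely this last step. The naive version of it, performed purely with formal power series, is exactly the maneuver that the introduction flags as invalidly executed in Miyamoto's Lemma~18 and at the end of Huang-Shimakura's Theorem~2.7, where illicit variable swaps hide the sign issue. A correct version must work with the genuine analytic continuations of $I_z$ on configuration spaces of punctured spheres, in the sense of Huang's modular tensor category framework, and therefore presupposes enough convergence to legitimize monodromy arguments. As a partial fallback one can use Proposition \ref{prop:evenness-equivalent-to-symmetric-form} to reduce symmetry of $\langle-,-\rangle_M$ to symmetry of its restriction to the lowest $L(0)$-eigenspace, and then attempt to deduce this from the canonical anti-involution on the Zhu algebra $A(V)$, trading an infinite-dimensional analytic problem for a finite-dimensional classification question for self-dual simple $A(V)$-modules.
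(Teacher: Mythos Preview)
The statement you are attempting to prove is labelled in the paper as the \emph{Evenness conjecture}; the paper does not prove it and explicitly says in the introduction that the author has been unable to do so. So there is no paper proof to compare against, and the relevant question is simply whether your argument closes the gap.

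It does not, and you say so yourself. Your first two steps are correct but only recover what is already established in the paper: the reduction to $\Omega(M,M)=+1$ is Proposition~\ref{prop:evenness-equivalent-to-symmetric-form} together with Lemma~\ref{lem:evenness}, and the ribbon identity $c_{M,M}^2=1$ just reproduces the dichotomy $\Omega(M,M)\in\{\pm 1\}$, which is Lemma~\ref{lem:intertwining-symmetrizer}. The entire content of the conjecture is your third step, and the manipulation you sketch there---pairing the skew-symmetry identity with $\langle \unit,-\rangle_V$ and reading off the lowest-order coefficient---is exactly the computation carried out in the proof of Proposition~\ref{prop:evenness-equivalent-to-symmetric-form}. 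That computation does not determine the sign; it only shows that the sign equals the Frobenius--Schur indicator $\Omega_M$, which is the quantity in question. Your claim that integrality of $L(0)$-eigenvalues makes the exponent in $(-z^{-2})^{L(0)}$ even, and that this ``should force $\Omega(M,M)=+1$'', is not an argument: the factor $(-1)^{L(0)}$ acts as $\pm 1$ on each weight space, and nothing you have written rules out the odd case. (Incidentally, the reference you cite as ``Huang--Shimakura'' is H\"ohn--Scheithauer in the paper.)

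Your fallback via the Zhu algebra is also not a proof. Reducing to the lowest weight space trades the problem for the statement that every self-dual simple $A(V)$-module carries a symmetric invariant form, but finite-dimensional associative algebras certainly admit self-dual simple modules with antisymmetric forms, so you would need to extract some structural constraint on $A(V)$ coming from the vertex-algebra origin, and you have not indicated what that constraint would be. As it stands, the proposal is an accurate survey of why the conjecture is hard, not a proof of it.
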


\begin{thm} \label{thm:conditional-result}
Assuming the evenness conjecture, any collection of integral-weight modules parametrized by a group of units in the fusion ring of $V$ admits the structure of a quasi-simple $\cC$-current extension on the direct sum.  The extension is a simple $C_2$-cofinite M\"obius vertex algebra, and it is unique up to isomorphism.  If $V$ is also a rational vertex operator algebra, then the extension is also rational, i.e., it is a simple regular vertex operator algebra.
\end{thm}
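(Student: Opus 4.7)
The plan is to reduce Theorem \ref{thm:conditional-result} to Theorem \ref{thm:even-units-yield-extension} by forming an intermediate extension $W_{2A}$ and invoking the evenness conjecture at the quotient level $A/2A$, where the relevant modules all become self-dual. First I would form $W_{2A} = \bigoplus_{i \in 2A} M_i$: by Corollary \ref{cor:2-torsion-modules-exist} the evenness obstruction on $2A$ vanishes automatically, so Proposition \ref{prop:even-homs-yield-vertex-algebras} produces $W_{2A}$ as a $\cT$-vertex algebra, and Theorem \ref{thm:even-units-yield-extension} applied to the subgroup $2A$ then gives that $W_{2A}$ is a simple $C_2$-cofinite M\"obius vertex algebra with finite-dimensional weight spaces. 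The invariant bilinear form on $V$ extends to $W_{2A}$ via the canonical pairings $M_i \otimes M_{-i} \to V \to \bC$ supplied by Lemma \ref{lem:contragradients-are-inverses}.

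Next, by Proposition \ref{prop:formation-of-modules}, each coset sum $M_{2A+i} = \bigoplus_{j \in 2A+i} M_j$ acquires a $W_{2A}$-module structure, and the family $\{M_{2A+i}\}_{i \in A/2A}$ carries a one-dimensional commutativity datum whose quadratic form $\bar\Omega$ pulls back under $A \to A/2A$ to the original $\Omega$. By Lemma \ref{lem:forming-irreducible-modules}, each $M_{2A+i}$ is a simple $W_{2A}$-module; it is integer-weight because $L(0)$ acts as before, and it is self-dual because its contragradient is $M_{2A-i}$, which coincides with $M_{2A+i}$ since $2i \in 2A$. I then apply the evenness conjecture to each simple self-dual integer-weight $W_{2A}$-module $M_{2A+i}$, obtaining that its contragradient form is symmetric. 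Since $A/2A$ is an $\bF_2$-vector space, Corollary \ref{cor:reduction-to-contragradient-form} shows the induced commutativity datum is even, so $\bar\Omega(\bar i, \bar i) = 1$ for all $\bar i \in A/2A$. Pulling back yields $\Omega(i,i) = 1$ on all of $A$, and the third claim of Theorem \ref{thm:obstruction} then produces the desired simple current extension $\bigoplus_{i \in A} M_i$; uniqueness is the fourth claim, and $C_2$-cofiniteness, together with rationality in the regular VOA case, follows from \cite{Y04} Lemma 2.6 and \cite{L01} Theorem 4.5 exactly as in Theorem \ref{thm:even-units-yield-extension}.

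The hardest step will be verifying all the hypotheses on the auxiliary algebra $W_{2A}$ needed to invoke the evenness conjecture there --- extending the invariant bilinear form of $V$ to a nondegenerate invariant bilinear form on $W_{2A}$, and checking that the commutativity datum on $\{M_{2A+i}\}$ supplied by Proposition \ref{prop:formation-of-modules} really is the canonical datum on the simple currents of the relevant subcategory of $W_{2A}$-modules --- so that the conclusion of the conjecture at the $W_{2A}$ level translates back into vanishing of $\Omega$ on all of $A$.
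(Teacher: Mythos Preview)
Your proposal is correct and follows essentially the same route as the paper: form $W_{2A}$, pass to the commutativity datum on the coset modules $\{M_{2A+i}\}_{i\in A/2A}$, invoke the evenness conjecture on those self-dual $W_{2A}$-modules, apply Corollary~\ref{cor:reduction-to-contragradient-form}, and then read off the remaining finiteness and rationality claims from Theorem~\ref{thm:even-units-yield-extension}. The paper's own proof is considerably terser and simply asserts that $W_{2A}$ and the $M_{2A+i}$ satisfy the hypotheses of the evenness conjecture; you have been more explicit in flagging that one must check $W_{2A}$ inherits an invariant bilinear form and that the induced datum on $\{M_{2A+i}\}$ is the canonical one, which is a fair point the paper glosses over.
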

\begin{proof}
We let $\cC$ denote the subcategory of $V$-modules spanned by our group $A$ of units in $K_0(V)$.  Then Proposition \ref{prop:even-homs-yield-vertex-algebras} implies we have a quasi-simple $\cC$-current extension $W_A$ of $V$, together with a commutativity datum on simple modules $M_{A+i}$.  By the evenness conjecture, the contragradient form on each $M_{A+i}$ is symmetric, so Corollary \ref{cor:reduction-to-contragradient-form} implies the commutativity datum is even, and we obtain a simple current extension of $V$.  The remaining claims follow from Theorem \ref{thm:even-units-yield-extension}.
\end{proof}

\end{document}